\DeclareMathAlphabet{\mathpzc}{OT1}{pzc}{m}{it}
\numberwithin{equation}{section}
\begin{document}

\theoremstyle{plain}

\newtheorem{theorem}{Theorem}[section]
\newtheorem{lemma}[theorem]{Lemma}
\newtheorem{example}[theorem]{Example}
\newtheorem{proposition}[theorem]{Proposition}
\newtheorem{corollary}[theorem]{Corollary}
\newtheorem{definition}[theorem]{Definition}
\newtheorem{Ass}[theorem]{Assumption}
\newtheorem{condition}[theorem]{Condition}
\theoremstyle{definition}
\newtheorem{remark}[theorem]{Remark}
\newtheorem{SA}[theorem]{Standing Assumption}
\newtheorem*{discussion}{Discussion}
\newtheorem{remarks}[theorem]{Remark}
\newtheorem*{notation}{Remark on Notation}
\newtheorem{application}[theorem]{Application}

%Stochastic Intervals
\newcommand{\of}{[\hspace{-0.06cm}[}
\newcommand{\gs}{]\hspace{-0.06cm}]}

%Lebesgue Measure
\newcommand\llambda{{\mathchoice
		{\lambda\mkern-4.5mu{\raisebox{.4ex}{\scriptsize$\backslash$}}}
		{\lambda\mkern-4.83mu{\raisebox{.4ex}{\scriptsize$\backslash$}}}
		{\lambda\mkern-4.5mu{\raisebox{.2ex}{\footnotesize$\scriptscriptstyle\backslash$}}}
		{\lambda\mkern-5.0mu{\raisebox{.2ex}{\tiny$\scriptscriptstyle\backslash$}}}}}

%Indikator
\newcommand{\1}{\mathds{1}}

%Filtrations
\newcommand{\F}{\mathbf{F}}
\newcommand{\G}{\mathbf{G}}

\newcommand{\B}{\mathbf{B}}

%Martingale Measures
\newcommand{\M}{\mathcal{M}}

%Scalar Product
\newcommand{\la}{\langle}
\newcommand{\ra}{\rangle}

%Special Type of Quadratic Variation
\newcommand{\lle}{\langle\hspace{-0.085cm}\langle}
\newcommand{\rre}{\rangle\hspace{-0.085cm}\rangle}
\newcommand{\blle}{\Big\langle\hspace{-0.155cm}\Big\langle}
\newcommand{\brre}{\Big\rangle\hspace{-0.155cm}\Big\rangle}

%Coordinate Process
\newcommand{\X}{\mathsf{X}}

%Short Cuts
\newcommand{\tr}{\operatorname{tr}}
\newcommand{\N}{{\mathbb{N}}}
\newcommand{\cadlag}{c\`adl\`ag }
\newcommand{\on}{\operatorname}
\newcommand{\oP}{\overline{P}}
\newcommand{\oO}{\mathcal{O}}
\newcommand{\D}{D(\mathbb{R}_+; \mathbb{R})}
\newcommand{\bx}{\mathsf{x}}
\newcommand{\z}{\mathfrak{z}}
\newcommand{\bb}{\hat{b}}
\newcommand{\bs}{\hat{\sigma}}
\newcommand{\bv}{\hat{v}}
\renewcommand{\v}{\mathfrak{m}}
\newcommand{\ob}{\widehat{b}}
\newcommand{\os}{\widehat{\sigma}}
\renewcommand{\j}{\varkappa}
\newcommand{\scl}{\ell}
\newcommand{\Y}{\mathscr{Y}}
\newcommand{\T}{\mathcal{T}}
\newcommand{\con}{\mathsf{c}}

%Typographical
\renewcommand{\epsilon}{\varepsilon}
\renewcommand{\rho}{\varrho}

%Semimartingale laws
\newcommand{\fPs}{\fP_{\textup{sem}}}
\newcommand{\fPas}{\mathfrak{S}^{\textup{ac}}_{\textup{sem}}}
\newcommand{\rrarrow}{\twoheadrightarrow}
\newcommand{\cA}{\mathcal{A}}
\newcommand{\ocA}{\mathcal{U}}
\newcommand{\cR}{\mathcal{R}}
\newcommand{\cK}{\mathcal{K}}
\newcommand{\cQ}{\mathcal{Q}}
\newcommand{\cF}{\mathcal{F}}
\newcommand{\cE}{\mathcal{E}}
\newcommand{\cC}{\mathcal{C}}
\newcommand{\cD}{\mathcal{D}}
\newcommand{\bC}{\mathbb{C}}
\newcommand{\cH}{\mathcal{H}}
\newcommand{\bth}{\overset{\leftarrow}\theta}
\renewcommand{\th}{\theta}
\newcommand{\cG}{\mathcal{G}}
\newcommand{\fPasn}{\mathfrak{S}^{\textup{ac}, n}_{\textup{sem}}}
\newcommand{\CLM}{\mathfrak{M}^\textup{ac}_\textup{loc}}
%\DeclareMathOperator*{\gr}{gr}

%Basic Stuff
\newcommand{\bR}{\mathbb{R}}
\newcommand{\nnabla}{\nabla}
\newcommand{\f}{\mathfrak{f}}
\newcommand{\g}{\mathfrak{g}}
\newcommand{\oconv}{\overline{\on{co}}\hspace{0.075cm}}
\renewcommand{\a}{\mathfrak{a}}
\renewcommand{\b}{\mathfrak{b}}
\renewcommand{\d}{\mathsf{d}}
\newcommand{\bS}{\mathbb{S}^d_+}
\newcommand{\p}{\dot{\partial}}
\newcommand{\dr}{r} %{\mathsf{r}}
\newcommand{\m}{\mathbb{M}}
\newcommand{\Q}{Q}
\newcommand{\n}{\overline{\nu}} %{\mathfrak{n}}
\newcommand{\usc}{\textit{USC}}
\newcommand{\lsc}{\textit{LSC}}
\newcommand{\q}{\mathfrak{q}}
\renewcommand{\X}{\mathscr{X}}
\newcommand{\W}{\mathcal{P}}
\newcommand{\fP}{\mathcal{P}}
\newcommand{\w}{\mathsf{w}}
\newcommand{\oM}{\mathsf{M}}
\newcommand{\oZ}{\mathsf{Z}}
\newcommand{\oK}{\mathsf{K}}
\renewcommand{\o}{{q}}
\renewcommand{\Re}{\operatorname{Re}}
\newcommand{\cCk}{\mathsf{c}_k}
\newcommand{\C}{\mathsf{C}}

\renewcommand{\emptyset}{\varnothing}

\allowdisplaybreaks

\makeatletter
\@namedef{subjclassname@2020}{%
	\textup{2020} Mathematics Subject Classification}
\makeatother

 \title[Propagation of Chaos for controlled McKean--Vlasov SPDEs]{Set-Valued Propagation of Chaos for controlled \\path-dependent McKean--Vlasov SPDEs} %\\
\author[D. Criens]{David Criens}
\author[M. Ritter]{Moritz Ritter}
\address{Albert-Ludwigs University of Freiburg, Ernst-Zermelo-Str. 1, 79104 Freiburg, Germany}
\email{david.criens@stochastik.uni-freiburg.de}
\email{moritz.ritter@stochastik.uni-freiburg.de}

\keywords{%\vspace{1ex} 
mean field control; propagation of chaos; McKean--Vlasov limits; stochastic partial differential equations; semigroup approach; interacting diffusions; stochastic optimal control; relaxed controls; martingale solutions; nonlinear stochastic processes; Knightian uncertainty; \(G\)-Brownian motion}

\subjclass[2020]{35R60, 49N80, 60F17, 60H15, 60K35, 93E20}

\thanks{We are grateful to the anonymous referee for many helpful comments and suggestions, which helped to improve the manuscript.}
\date{\today}

\maketitle

\begin{abstract}
We develop a limit theory for controlled path-dependent mean field stochastic partial differential equations (SPDEs) within the semigroup approach of Da Prato and Zabczyk. More precisely, we prove existence results for mean field limits and particle approximations, and we establish set-valued propagation of chaos in the sense that we show convergence of sets of empirical distributions to sets of mean field limits in the Hausdorff metric topology. Furthermore, we discuss consequences of our results to stochastic optimal control. As another application, we deduce a propagation of chaos result for Peng's \(G\)-Brownian motion with drift interaction. 
\end{abstract}

%\tableofcontents

\section{Introduction}
The area of controlled McKean--Vlasov dynamics, also known as mean field control, has rapidly developed in the past years, see, e.g., the monograph \cite{car_della} and the references therein.
Recently, there is also increasing interest in infinite dimensional systems such as controlled path-dependent McKean--Vlasov stochastic partial differential equations (controlled mean field SPDEs) of type 
\begin{align} \label{eq: SPDE intro}
d X_t = A X_t \, dt + b (\xi_t, t, X, P^X_t) \, dt + \sigma (\xi_t, t, X, P^X_t) \, d W_t, 
\end{align}
where \(\xi\) is a control process and \(P^X_t\) denotes the law of the stopped process \(X_{\cdot \wedge t}\). For controlled mean field SPDEs of type \eqref{eq: SPDE intro} with additional dependence on the distribution of the controls, well-posedness of the state equation, the dynamic programming principle and a Bellman equation were recently proved in the paper \cite{CGKPR23}. An infinite dimensional mean field control framework that also allows for jumps has been studied in \cite{DOS}. 
We also refer to these papers for comments on related literature and applications.

	Mean field dynamics are usually motivated by particle approximations, cf., e.g., Sznitman's seminal monograph \cite{SnzPoC}. 
It is important to establish this motivation in a rigorous manner.
For finite dimensional controlled systems, a general limit theory was developed in the paper \cite{LakSIAM17} and extended in \cite{DPT22} to a setup with common noise.  An infinite dimensional result was recently proved in the paper \cite{C23c} within the variational SPDE framework initiated by Pardoux~\cite{par75} and Krylov--Rozovskii~\cite{krylov_rozovskii}. 

In this paper, we continue this line of research in terms of a limit theory for controlled mean field SPDEs within the semigroup approach of Da Prato and Zabczyk \cite{DaPratoEd1}. Our results provide a particle motivation for equations of type \eqref{eq: SPDE intro} with feedback controls, which is a setting in the spirit of the paper \cite{CGKPR23}, albeit using a different control formulation.  Here, a control process is said to be a {\em feedback control} if it only depends on the state process in a predictable, but possibly path-dependent, way. We emphasize that it needs {\em not} be Markovian. At this point, we already highlight that our setting relates naturally to the other important relaxed and weak control frameworks, showing that our results also hold in these formulations. Parts of our results also translate to the strong control setup, relating our work to the paper~\cite{CGKPR23}, see Remark~\ref{rem: main1}~(ii) for a discussion.

To explain our main results, consider a particle system \(X = (X^1, \dots, X^n)\) given by mild solutions to the SPDEs 
\begin{align*}
d X^k_t &= A X^k_t  \, dt + b (\f^k_t (X), t, X^k, \X_n (X_{\cdot \wedge t})) \, dt + \sigma (\f^k_t (X), t, X^k, \X_n (X_{\cdot \wedge t})) \, d W^k_t, 
\end{align*}
with i.i.d. (= independent, identically distributed) initial data, whose distribution we denote by~\(\nu^n\), 
where
\begin{align*}
\X_n (X) &= \frac{1}{n} \sum_{k = 1}^n \delta_{X^k}
\end{align*}
denotes the empirical distribution of the particles, \(\f = (\f^1, \dots, \f^n)\) are arbitrary feedback controls and \(W^1, \dots, W^n\) are independent cylindrical Brownian motions. 
Here, the linear operator \(A\) is the generator of a strongly continuous semigroup on the Hilbert space $H$, which is the state space of the particles.
Let \(\ocA^n (\nu^n)\) be the set of empirical distributions of such particle systems.
The associated set of mean field limits is denoted by \(\cA^0 (\nu^0)\), where \(\nu^0\) denotes the limit of \(\nu^n\) in suitable Wasserstein topology. It consists of all laws of mild solutions to so-called McKean--Vlasov (or distribution dependent) SPDEs of the type
\[
d X_t = A X_t \, dt + b (\f_t (X), t, X, P^X_t) \, dt + \sigma (\f_t (X), t, X, P^X_t) \, dW_t, \quad X_0 \sim \nu^0, 
\]
where \(\f\) is an arbitrary feedback control and \(W\) is a cylindrical Brownian motion. Finally, let \(\ocA^0 (\nu^0)\) be the set of all probability measures that are supported on the set \(\cA^0 (\nu^0)\), i.e., 
\[
\ocA^0 (\nu^0) = \Big\{ P \colon P (\cA^0 (\nu^0)) = 1 \Big\}.
\]
We notice that \(\ocA^n (\nu^n)\) and $\ocA^0 (\nu^0)$ consist of probability measures on a set of probability measures, which appears to be natural due to the interest in laws of empirical distributions.

This setting can equivalently be framed in the context of stochastic processes under parameter uncertainty, also called nonlinear stochastic processes, as studied, e.g., in the recent papers \cite{C25_SIFIN, C23b,CN22a, CN22b, CN25, neufeld2017nonlinear, NVH, peng2007g}. This connection provides a comprehensive interpretation of our framework, formulating Knightian uncertainty within the notion of stochastic control.

Our contribution is twofold and investigates the relation of the sets \(\ocA^n (\nu^n)\) and \(\ocA^0 (\nu^0)\) from an analytic and a stochastic optimal control perspective. 

For the analytic part, we show that \(\ocA^n (\nu^n)\) and \(\ocA^0 (\nu^0)\) are nonempty and compact in a suitable Wasserstein space and that \(\ocA^n (\nu^n)\) converges to \(\ocA^0 (\nu^0)\) in the Hausdorff metric topology. This result can be interpreted probabilistically as {\em set-valued propagation of chaos}. Indeed, when \(\ocA^n (\nu^n)\) and \(\ocA^0 (\nu^0)\) are singletons, we recover the classical formulation of propagation of chaos. 
To the best of our knowledge, set-valued propagation of chaos was first studied in the recent paper \cite{C23c} for a variational controlled SPDE framework. 
The concept of set-valued propagation of chaos can also be put in the context of model uncertainty. It shows that the sets of feasible interacting stochastic models converge to their McKean--Vlasov counterparts in a meaningful topology. In this regard, the result provides a natural extension of the classical mean field theory to a setting with uncertainty. 
The non-emptiness of \(\ocA^0 (\nu^0)\) provides an existence result for controlled McKean--Vlasov SPDEs in a semigroup framework. In particular, it covers some uncontrolled cases that were studied in \cite{bhatt1998interacting,C23}.
Our proof for \(\ocA^0 (\nu^0) \not = \emptyset\) is based on a particle approximation, not relying on Lipschitz assumptions.

As a second main contribution, we investigate approximation properties of optimal control problems. Namely, for a continuous input function \(\psi\) of suitable growth, we prove that the value function 
\(\nu \mapsto \sup_{Q\,\in\, \ocA^n (\nu)} E^Q [ \psi  ]\) related to \(\ocA^n (\nu)\) converges uniformly on compacts (in their initial distributions \(\nu\)) to the value function \(\nu \mapsto \sup_{Q\, \in\, \ocA^0 (\nu)} E^Q [  \psi  ]\) related to \(\ocA^0 (\nu)\). We also derive versions of this statement for upper and lower semicontinuous input functions \(\psi\) of suitable growth. 
These results allow us to deduce limit theorems in the spirit of \cite{LakSIAM17}. Namely, we show that accumulation points of sequences of \(n\)-state nearly optimal controls maximize the mean field value function, and that any optimal mean field control can be approximated by a sequence of \(n\)-state nearly optimal controls. 
Furthermore, they can be translated into the language of model uncertainty. 
To illustrate this point of view, we deduce a type of propagation of chaos for \(G\)-Brownian motion with drift interaction. 

We now comment on related literature. As mentioned above, mean field SPDEs within the semigroup approach have been investigated in the recent paper \cite{CGKPR23}. A particle motivation for such a framework appears to be missing in the literature.  The objective of the present paper is to address this gap. For a comparison of the assumptions used, we refer to Remark \ref{rem: main1}~(iii) below. 

Our work is heavily inspired by the papers \cite{LakSIAM17} and \cite{C23c}. 
We highlight that the SPDE framework used in this paper is technically different from both, the finite dimensional setting that was investigated in \cite{LakSIAM17} and the variational framework studied in \cite{C23c}.
From a modeling point of view, the references \cite{C23c,LakSIAM17} work within a relaxed control framework (as used, e.g., in \cite{nicole1987compactification,EKNJ88, ElKa15}) and provide limit theorems for the joint empirical distributions of the particles and their controls, while we work with feedback controls and the empirical distributions of the particles. We prove that, under certain convexity assumptions, our setting can be translated to a relaxed framework. In particular, this shows that our main convergence results also hold for the relaxed as well as the weak control frameworks; see \cite{ElKa15}.
Working with feedback controls comes with some pleasant features. 
For example, it allows us to impose assumptions directly on the volatility coefficient~\(\sigma\), circumventing a type of disintegration procedure that was used in \cite{C23c,nicole1987compactification}. Furthermore, we mention again that our model allows for a novel interpretation in the context of model uncertainty and that some of our results also propagate to the strong control framework, building a connection to the recent paper~\cite{CGKPR23}, see Remark~\ref{rem: main1}~(ii).

Let us also comment on some technical aspects of our work. The semigroup framework differs from its variational counterpart in many points. 
For instance, in \cite{C23c} the state space for the paths of the particles is the intersection of a classical path space of continuous functions with an \(L^p\) space and it involves two Banach spaces. Here, we work only with one Hilbert space and the path space of continuous functions. Further, in the paper~\cite{C23c} certain uniform moment bounds are incorporated into the definition of the model. This is not necessary in our setting, as suitable estimates can be proved under linear growth conditions on the coefficients that appear natural in our setting. 
Such differences also influence the structure of the results and proofs. For example, our moment estimates enable us to prove compactness of the set \(\cA^0\), which then transfers directly to \(\ocA^0\). The setting from \cite{C23c} gave no access to compactness of \(\cA^0\).
Similar to \cite{C23c,LakSIAM17}, parts of our proofs rely on compactness and martingale problem methods that were developed in~\cite{nicole1987compactification} to study the regularity of value functions in a finite dimensional Markovian relaxed control setting.
In order to apply such methods, we relate our setting to a relaxed control framework. The proof for this connection relies on convexity arguments and Filippov's implicit function theorem.
Further, we adapt some tightness and martingale problem techniques from the papers~\cite{bhatt1998interacting,C23,gatarekgoldys} to our setup with controls.

This paper is structured as follows. Our framework and the main results are explained in Section~\ref{sec: main1}. 
The application of our main result to \(G\)-Brownian motion with drift interaction is presented in Section~\ref{app: G BM}.
The proofs are given in Section~\ref{sec: pf}. Furthermore, we added an appendix that provides a general existence result for classical SPDEs without controls. 

	\begin{notation}
	In this paper, \(C\) denotes a generic positive constant that might change from line to line. In case the constant depends on important quantities, this is mentioned specifically. 
\end{notation}

\section{Propagation of Chaos for controlled SPDEs} \label{sec: main1}
Fix a compact metrizable space \(F\), which is considered to be the action space for the control processes.
Let \(H\) be a separable Hilbert space (endowed with the norm topology), take a finite time horizon \(T > 0\) and let \(\Omega\) be the space of all continuous functions from \([0, T]\) into \(H\) endowed with the uniform topology. The coordinate map on \(\Omega\) is denoted by \(X = (X_t)_{t \in [0, T]}\). We define \(\cF := \sigma (X_t, t \in [0, T])\), which is well-known to be the Borel \(\sigma\)-field on \(\Omega\), and the corresponding filtration \(\mathbf{F} = (\cF_t)_{t \in [0, T]}\) with \(\cF_t := \sigma (X_s, s \in [0, t])\).
Take another separable Hilbert space~\(U\), which we use as state space for the randomness that drives our systems. The space of bounded linear operators from \(U\) into \(H\) is denoted by \(L (U, H)\) and the operator and Hilbert--Schmidt norm is denoted by \(\|\cdot\|_{L (U, H)}\) and  \(\|\cdot\|_{L_2 (U, H)}\), respectively. Further, in case \(U = H\) we suppress the second argument in our notation, i.e., for example we write  \(L (H)\) instead of \(L (H, H)\).
For any Polish space \(E\), let \(\fP(E) \equiv \W^0 (E)\) be the space of Borel probability measures on \(E\) and endow it with the weak topology, i.e., the topology of convergence in distribution. For \(t \in [0, T]\) and \(\omega \in \Omega\), we set 
\[
\|\omega\|_t := \sup_{s \in [0, t]} \|\omega (s)\|_H,
\]
and, for \(p \geq 1\), we define the \(p\)-Wasserstein space
\[
\W^p (\Omega) := \Big\{ \mu \in \fP (\Omega) \colon \|\mu\|_p := \Big( \int \|\omega\|_T^p \, \mu (d\omega) \Big)^{1/p} < \infty \Big\}.
\]
We endow \(\W^p (\Omega)\) with the \(p\)-Wasserstein topology that is generated by the \(p\)-Wasserstein metric~\(\w_p\). We define \(\W^p (H)\) in the same way with \(\|\, \cdot \,\|_T\) replaced by the norm \(\| \, \cdot \, \|_H\) of the Hilbert space \(H\).

Throughout this paper, we fix four constants \(\alpha, p, \o\) and $\rho$ such that 
\begin{align} \label{eq: constants}
\alpha \in \left( 0, \tfrac{1}{2}\right), \quad p\in \left(\tfrac{1}{\alpha},\infty\right), \quad \o \in \{0\}\cup \left[1,p\right), \text{ and }\rho\in \big(0,1-\tfrac{2}{p}\big).
\end{align}
Furthermore, we fix a Borel function \(\j \colon [0, T] \to [0, \infty]\) such that 
		\begin{align} \label{eq: DZ cond}
	\int_0^T \Big[\frac{\j (s)}{s^{\alpha}} \Big]^2 \, ds < \infty.
\end{align}
Let 
\begin{align*}
	&b \colon F \times [0, T] \times \Omega \times \W^\o (\Omega) \to H, 
	\\
	&\sigma \colon F \times [0, T] \times \Omega \times \W^\o (\Omega) \to L (U, H)
	\end{align*} 
	be Borel measurable functions. Furthermore, we presume that \(b\) and \(\sigma\) are predictable in the sense that, for all \((f, t, \omega, \mu) \in F \times [0, T] \times \Omega \times \W^\o(\Omega)\), 
\(
b (f, t, \omega, \mu)\) and \(\sigma (f, t, \omega, \mu)\)
depend on \(\omega\) only through \((\omega (s))_{s < t}\).
Let \(A \colon D(A) \subset H \to H\) be the generator of a strongly continuous semigroup \((S_t)_{t \geq 0}\) on \(H\). 

\smallskip 
We proceed with the formulation of the conditions needed for our main result.

\begin{condition} \label{cond: main1}
	\quad
	\begin{enumerate}
		\item[\textup{(i)}] The functions \(b\) and \(\sigma\) are continuous on \(F \times [0, T] \times \Omega \times \W^\o (\Omega)\).
		\item[\textup{(ii)}] There exists a constant \(C > 0\) such that 
		\begin{align}
			\|b (f, t, \omega, \mu) \|_H + \|\sigma (f, t, \omega, \mu)\|_{L (U, H)} & \leq C \Big[ 1 + \|\omega\|_t + \|\mu\|_p \Big],  \label{eq: LG1}
			\\
			\|S_s \sigma (f, t, \omega, \mu) \|_{L_2 (U, H)} &\leq \j (s) \Big[ 1 + \|\omega\|_t + \|\mu\|_p \Big],  \label{eq: LG2}
		\end{align}
		for all \(f \in F, s, t \in [0, T], \omega \in \Omega\) and \(\mu \in \W^p (\Omega)\).
		\item[\textup{(iii)}] For every \((t, \omega, \mu) \in [0, T] \times \Omega \times \W^p (\Omega)\), the set 
		\[
		\big\{ (b (f, t, \omega, \mu), \sigma \sigma^* (f, t, \omega, \mu)) \colon f \in F \big\} \subset H \times L (H)
		\]
		is convex. Here, \(\sigma^*\) denotes the adjoint of \(\sigma\).
			\end{enumerate}
	\end{condition}

 \begin{condition}\label{cond: compact assumption}
     The operator \(A\) generates a compact semigroup, i.e., for every \(t > 0\), the operator \(S_t\) is compact.
 \end{condition}

Let us recall some concepts from functional analysis. We start with the definition of a Riesz basis, see \cite[Definition 7.9]{heil2010basis}.
\begin{definition}
A sequence $(e_k)_{k = 1}^\infty\subset H$ is called a \emph{Riesz basis} if it is equivalent to an orthonormal basis in $H$, i.e., there is a topological isomorphism $\T$ and an orthonormal basis $(b_k)_{k = 1}^\infty$ in $H$ such that $e_k= \T(b_k)$ for all $k\in \mathbb N$.
\end{definition}
\begin{remark} \label{rem: biorthogonal seq}
If $(e_k)_{k = 1}^\infty$ is a Riesz basis of $H$, then there are constants $C,c<\infty$ such that for all $f\in H$:
\begin{align}\label{eq: Bessel estimation}
   c\, \| f\|_H^2\leq \sum_{k = 1}^\infty|\langle f,e_k\rangle_H |^2\leq C\, \| f\|_H^2.
\end{align}
These constants are optimally defined via the operator norm of the topological isomorphism, i.e., $c=\|\T^{-1}\|^{-2}_{L(H)}$ and   $C=\|\T\|_{L(H)}^2$. 
For each Riesz basis $(e_k)_{k = 1}^\infty$ there exists an equivalent inner product $(\cdot,\cdot)$ on $H$ such that $(e_k)_{k = 1}^\infty$ is an orthonormal basis for $H$ with respect to $(\cdot,\cdot)$.
For the proofs see \cite[Lemma 7.12, Theorem 7.13]{heil2010basis}. 
\end{remark}

\begin{condition}\label{cond: iii alternative}
There is a Riesz basis $(e_k)_{k=1}^\infty \subset H$ with the following properties:
\begin{enumerate}
    \item[\textup{(i)}] There exists a sequence $(\lambda_k)_{k=1}^\infty \subset \mathbb{R}$ such that $\lambda_k>0$ and 
    \begin{align}\label{eq: eigenvector S*}
        S_t^{*}e_k=e^{-\lambda_k t}e_k\quad \text{for all $k\in\mathbb N$}.
    \end{align}
    \item[\textup{(ii)}] There exists a sequence $(\cCk)_{k=1}^\infty\subset \mathbb R_{+}$ such that
    \begin{align}\label{eq: summation_condition}
        \sum\limits_{k=1}^\infty  \cCk^2 \lambda_k^{-\rho} <\infty,
    \end{align}
    and
    \begin{align}\label{eq: estimate_b_sigma_in_summation_condition}
        |\langle b(f,t,\omega,\mu),e_k\rangle_H |^2 %&\leq \cCk^2(1+\|x\|_H^2)\\
        + \|\sigma^*(f,t,\omega,\mu)e_k\|^2_{U}&\leq \cCk^2\, \Big[1+\|\omega\|_t^2+\|\mu\|^2_{p}\Big]
    \end{align}
    for all $(f, t, \omega, \mu, k)\in F \times [0, T] \times \Omega \times \W^p (\Omega) \times \mathbb N$.
\end{enumerate}
\end{condition}
\begin{remark}
\begin{enumerate}
    \item[\textup{(i)}] 
    By \cite[Corollary~10.6, p. 41]{pazy}, the adjoint semigroup \((S^*_t)_{t \geq 0}\) is strongly continuous with generator \(A^*\). Moreover, if \(e_k\) is an eigenvector of \(- A^*\) for the eigenvalue \(\lambda_k\), then, by the exponential formula \cite[Theorem~8.3, p. 33]{pazy}, it holds that
    \begin{align*}
        S^*_t e_k &= \lim_{n\to\infty} \left(\operatorname{Id} -\,  tA^*/n\right)^{-n} e_k \\
        &= e^{-t\lambda_k}\lim_{n\to\infty} \left(\operatorname{Id}  -\,  tA^*/n\right)^{-n} \left(1+t\lambda_k/n \right)^n e_k \\
        &= e^{-t\lambda_k}\lim_{n\to\infty} \left(\operatorname{Id} -\,  tA^*/n\right)^{-n} \left(\operatorname{Id}  -\, tA^*/n\right)^n e_k \\  
        &= e^{-t\lambda_k}e_k.
    \end{align*}
    \item[\textup{(ii)}] Suppose that \(- A\) is a positive self-adjoint operator with purely discrete spectrum, as considered, for instance, in \cite{bhatt1998interacting}.  By \cite[Propositions 5.12, 5.13]{schmued}, there exists a sequence \((\lambda_k)_{k = 1}^\infty \subset \mathbb{R}_+\) such that \(\lim_{n \to \infty} \lambda_n = \infty\) and an orthonormal basis \((e_k)_{k = 1}^\infty \subset H\) such that 
    \begin{align*}
        A e_k = - \lambda_k e_k, \quad k \in \mathbb{N}.
    \end{align*}
    By virtue of \cite[Propositions~6.13, 6.14]{schmued}, thanks to the self-adjointness of \(A\), the operator \(A\) generates a contraction semigroup \((S_t)_{t \geq 0}\) of self-adjoint operators, and it holds that
    \begin{align*}
        S^*_t e_k = S_t e_k = e^{- \lambda_kt} e_k, \quad k \in \mathbb{N}.
    \end{align*}
    \item[\textup{(iii)}]
			A typical choice for \(\j\) from \eqref{eq: DZ cond} is the function \(t \mapsto \|S_t\|_{L_2 (H)}\). In this case the integrability condition \eqref{eq: DZ cond} translates to the classical Da Prato--Zabczyk condition (cf. Section~7.1.1~in~\cite{DaPratoEd1}) that is given by
			\begin{align} \label{eq: real DZ cond}
				\int_0^T \frac{\|S_s\|^2_{L_2 (H)}}{s^{2 \alpha}} \, ds < \infty.
			\end{align}
			In particular, \eqref{eq: real DZ cond} entails that \(S_t\) is compact for every \(t > 0\), i.e., it implies Condition~\ref{cond: compact assumption}.
			Further, in this situation, \eqref{eq: LG2} is implied by \eqref{eq: LG1}.
   
\smallskip
A concrete example where \eqref{eq: real DZ cond} holds is \(H = L^2 (\mathscr{O})\), for a bounded region \(\mathscr{O} \subset \mathbb{R}^d\) with smooth boundary, and \(A\) being strongly elliptic of order \(2m > d\), see \cite[Example~3]{gatarekgoldys}. This includes for instance the Laplacian in case \(d = 1\).

\item[\textup{(iv)}]
     Under Condition~\ref{cond: iii alternative}, the inequality \eqref{eq: LG2} holds for the choice
    \begin{align} \label{eq: choice kappa}
        \j (t) \equiv C \, \sqrt{ \sum_{k = 1}^\infty e^{- 2 \lambda_k t} \cCk^2 }, \quad t \in [0, T],
    \end{align}
    where \(C \geq 1\) is a large enough constant. 
			Indeed, by Lemma \ref{lem: hilbert schmidt estimate} below, this follows from the estimate
   \begin{align*}
       \| S_t \sigma (f, t, \omega, \mu) \|^2_{L_2 (H)} &\leq C\sum_{k = 1}^\infty \| \sigma^* (f, t, \omega, \mu) S^*_t e_k \|^2_U 
       \\&= C\sum_{k = 1}^\infty e^{- 2 \lambda_k t} \| \sigma^* (f, t, \omega, \mu) e_k \|^2_U 
       \\&\leq C \sum_{k = 1}^\infty e^{- 2 \lambda_k t} \, \cCk^2\, \Big[ 1+\|\omega\|_t^2+\|\mu\|^2_{p} \Big].
   \end{align*}
   Furthermore, with \(\kappa\) as in \eqref{eq: choice kappa}, \eqref{eq: DZ cond} holds for \(\alpha = (1 - \rho) / 2\), as
   \begin{align*}
       \int_0^T \Big[ \frac{\j (s)}{s^{ (1 - \rho) / 2}} \Big]^2 \, ds &= C \, \sum_{k = 1}^\infty \int_0^T \frac{e^{- 2 \lambda_k s} \cCk^2}{s^{1 - \rho}} \, ds
       \\&= C \, \sum_{k = 1}^\infty \int_0^{\lambda_k T} \frac{e^{- 2 z}}{z^{1 - \rho}} \, dz \, \frac{\cCk^2}{\lambda_k^\rho} 
       \\&\leq C \, \int_0^\infty \frac{e^{- 2z}}{z^{1 - \rho}} \, dz \, \sum_{k = 1}^\infty \cCk^2 \lambda_k^{- \rho} < \infty.
   \end{align*}
   Notice that this choice of \(\alpha\) is in line with \eqref{eq: constants}, since, for \(\alpha = (1 - \rho) / 2\), \(p > 1 / \alpha\) is equivalent to \(\rho < 1 - 2 / p\), and \(\alpha < 1 / 2\) holds if and only if \(\rho > 0\).
   \item[\textup{(v)}] Let \(L\) be a closed, densely defined linear operator with simple eigenvalues \((\lambda_n)_{n = 1}^\infty\) and corresponding eigenvectors \((e_n)_{n = 1}^\infty\) that are assumed to form a Riesz basis. According to \cite[Exercise~3.20, p. 145]{CZ20}, \(L\) has a compact resolvent if and only if \(\lim_{n \to \infty}  1 / \lambda_n = 0\). 
   
   \smallskip 
   By virtue of \cite[Theorem~3.3, p. 48]{pazy}, compactness of the resolvent of \(A\) is a necessary condition for the compactness of the semigroup \((S_t)_{t \geq 0}\) and therefore, for Condition~\ref{cond: compact assumption}.
   Consequently, in case \(A\) has eigenvectors \((e_n)_{n = 1}^\infty\), corresponding to simple eigenvalues \((\lambda_n)_{n = 1}^\infty\), that form a Riesz basis, \(\lim_{n \to \infty} 1 / \lambda_n = 0\) is necessary for Condition~\ref{cond: compact assumption}. This distinguishes Condition~\ref{cond: compact assumption} from Condition~\ref{cond: iii alternative}, where such an assumption is not needed.
   
   \smallskip
   Let us provide an explicit example. Take \(H = \ell^2\) and let \((q (n))_{n = 1}^\infty \subset \bR\) be such that \(\sup_{n \in \mathbb{N}} q (n) < \infty\). We emphasize that the sequence \((q(n))_{n = 1}^\infty\) is not assumed to be bounded from below. The so-called {\em multiplicative semigroup} is given by 
   \[
   S_t x := e^{ t q} x=(e^{tq(n)}x(n))_{n=1}^\infty, \quad t \geq 0, \ x \in \ell^2.
   \]
   It is well-known that \((S_t)_{t \geq 0}\) is a strongly continuous semigroup with generator 
   \[
   A x = q  x= (q(n)x(n))_{n=1}^\infty, \quad x \in D(A) := \big\{ x \in \ell^2 \colon q  x \in \ell^2 \big\},
   \]
   cf. \cite[Section~II.2.b]{EN}. 
   Evidently, \((S_t)_{t \geq 0}\) and \(A\) are self-adjoint.
   For \(n \in \mathbb{N}\), define \(\lambda_n := - q (n)\) and \(e_n (k) := \1_{\{k = n\}}\) for \(k \in \mathbb{N}\). Then, \((\lambda_n)_{n = 1}^\infty\) are eigenvalues of \(- A\) with corresponding eigenvectors \((e_n)_{n = 1}^\infty\). In particular, by the discussion above (or see the proposition on p. 122 in \cite{EN}), \((S_t)_{t \geq 0}\) is compact only when \(\lim_{n \to \infty} q (n) = - \infty\). Depending on the sequences \((\mathsf{c}_n)_{n = 1}^\infty\) and \((q (n))_{n = 1}^\infty\), it is possible that Condition~\ref{cond: iii alternative} holds although \(\lim_{n \to \infty} q (n) = - \infty\) is violated.
\end{enumerate}  
    \end{remark}
\begin{condition} \label{cond: main2}	
	There exists a constant \(C > 0\) such that 
		\begin{align*}
			\| b (f, t, \omega, \mu) - b (f, t, \alpha, \nu) \|_ H &\leq C \big( \|\omega - \alpha\|_t + \w_p (\mu, \nu)\big), \\
			\| S_s (\sigma (f, t, \omega, \mu) - \sigma (f, t, \alpha, \nu)) \|_ {L_2 (U, H)} &\leq \j (s) \big( \|\omega - \alpha\|_t + \w_p (\mu, \nu)\big),
		\end{align*}
		for all \(f \in F, s, t \in [0, T], \omega, \alpha \in \Omega\) and \(\mu, \nu \in \W^p (\Omega)\).
\end{condition}

For \(n \in \mathbb{N}\), define 
\begin{align} \label{eq: X_n def}
\X_n \colon \Omega^n \to \fP (\Omega), \quad \X_n (\omega^1, \dots, \omega^n) := \frac{1}{n} \sum_{k = 1}^n \delta_{\omega^k}.
\end{align}
The following definition introduces a set of interacting SPDEs with feedback controls. 
\begin{definition} 
For \(\nu \in \W(H)\) and \(n \in \mathbb{N}\), let \(\cA^n (\nu)\) be the set of probability measures \(P \in \fP(\Omega^n)\) such that there exist \(\mathbf{F}^n\)-predictable processes \(\f^1, \dots, \f^n \colon [0, T] \times \Omega^n \to F\) and, possibly on a standard extension of the stochastic basis \((\Omega^n, \cF^n, \mathbf{F}^n, P)\), independent standard cylindrical Brownian motions \(W^1, \dots, W^n\) such that \(P\)-a.s., for all \(t \in [0, T]\) and \(k = 1, \dots, n\),
    \[
    X^k_t = S_t X^k_0 + \int_0^t S_{t - s} b (\f^k_s, s, X^k, \X_n (X_{\cdot \wedge s})) \, ds + \int_0^t S_{t - s} \sigma (\f^k_s, s, X^k, \X_n (X_{\cdot \wedge s})) \, d W^k_s, 
    \]
    and \(X^1_0, \dots, X^n_0\) are i.i.d.\footnote{= independent, identically distributed}  with distribution \(\nu\), where \(X = (X^1, \dots, X^n)\) denotes the coordinate process on \(\Omega^n\).
\end{definition}

Next, we also define the set of potential mean field control limits. 
\begin{definition} 
	\label{def: cA0}
For \(\nu \in \W^p (H)\), let \(\cA^0 (\nu)\) be the set of probability measures \(P \in \W^p(\Omega)\) such that there exist an \(\mathbf{F}\)-predictable process \(\f \colon [0, T] \times \Omega \to F\) and, possibly on a standard extension of the stochastic basis \((\Omega, \cF, \mathbf{F}, P)\), a standard cylindrical Brownian motion \(W\) such that \(P\)-a.s., for all \(t \in [0, T]\),
    \[
    X_t = S_t X_0 + \int_0^t S_{t - s} b (\f_s, s, X, P^X_s) \, ds + \int_0^t S_{t - s} \sigma (\f_s, s, X, P^X_s) \, d W_s,
    \]
    and \(X_0 \sim \nu\),\footnote{that is, \(X_0\) has distribution \(\nu\)} 
    where \(P^X_s := P \circ X^{-1}_{\cdot \wedge s}\).
\end{definition}

In the following, we investigate the connection of the sets
\begin{align*}
\ocA^n (\nu) &:= \Big\{ Q \in \fP (\fP (\Omega)) \colon Q = P \circ  \X_n^{-1} \text{ for some } P \in \cA^n (\nu) \Big\}, \\ 
\ocA^0 (\nu) &:= \Big\{ Q \in \fP (\fP(\Omega)) \colon Q ( \cA^0 (\nu)) = 1 \Big\}.
\end{align*}
At this point, anticipating Theorem~\ref{theo: main1}~(i) below, the set \(\cA^0 (\nu)\) is compact in \(\fP^\o (\Omega)\). Thus, \(\cA^0 (\nu) \in \mathcal{B} (\fP^\o (\Omega)) \subset \mathcal{B} (\fP (\Omega))\) (see \cite[Discussion after Proposition~5.7]{car_della}) and \(\ocA^0 (\nu)\) is well-defined. 
The following theorem is the main result of this paper. 
\begin{theorem} \label{theo: main1}
	We impose Condition \ref{cond: main1}. In addition, we assume that either Condition~\ref{cond: compact assumption} or Condition~\ref{cond: iii alternative} holds. 
 Take a sequence \((\nu^n)_{n = 0}^\infty \subset \W^p (H)\) such that \(\sup_{n \geq 0} \int \|z\|^p_H \, \nu^n (dz) < \infty\) and \(\nu^n \to \nu^0\) in~\(\W^q (H)\). Then, the following hold:
 
	\begin{enumerate}
		\item[\textup{(i)}] For every \(n \in \mathbb{N}\), the sets \(\cA^0 (\nu^0) \subset \W^\o (\Omega), \cA^n (\nu^n) \subset \W^\o (\Omega^n)\) and \(\ocA^0 (\nu^0), \ocA^n (\nu^n) \subset \W^\o (\W^\o (\Omega))\) are nonempty and compact. 
		\item[\textup{(ii)}] Every sequence \((Q^n)_{n = 1}^\infty\) with \(Q^n \in \ocA^n (\nu^n)\) is relatively compact in \(\W^\o (\W^\o (\Omega))\) and each of its \(\o\)-Wasserstein accumulation points is in \(\ocA^0 (\nu^0)\).
		\item[\textup{(iii)}] For every upper semicontinuous function \(\psi \colon \W^\o (\Omega) \to \bR\) such that 
		\begin{align} \label{eq: bound property}
			\exists \, C > 0 \colon \ \ |\psi (\mu)| \leq C (1 + \|\mu\|^\o_\o) \ \ \forall \, \mu \in \W^\o (\Omega),
			\end{align}
		it holds that
		\[
		\limsup_{n \to \infty} \sup_{Q \in \ocA^n (\nu^n)} E^Q \big[ \psi \big] \leq \sup_{Q \in \ocA^0 (\nu^0)} E^Q \big[ \psi \big].
		\]
	\end{enumerate}
	In addition to the assumptions above, suppose that Condition \ref{cond: main2} holds, and that \(\nu^n \to \nu^0\) in~\(\W^p (H)\). 	
	\begin{enumerate}
		\item[\textup{(iv)}] 
		For every \(Q^0 \in \ocA^0 (\nu^0)\), there exists a sequence \((Q^n)_{n = 1}^\infty\) with \(Q^n \in \ocA^n (\nu^n)\) and \(Q^n \to Q^0\) in \(\W^\o (\W^\o (\Omega))\).
		\item[\textup{(v)}] For every lower semicontinuous function \(\psi \colon \W^\o (\Omega) \to \bR\) with the property \eqref{eq: bound property}, it holds that 
		\[
		 \sup_{Q \in \ocA^0 (\nu^0)} E^Q \big[ \psi \big] \leq \liminf_{n \to \infty} \sup_{Q \in \ocA^n (\nu^n)} E^Q \big[ \psi \big].
		\]
				\item[\textup{(vi)}] For every compact set \(K \subset \W^p (H)\) and every continuous function \(\psi \colon \W^\o (\Omega) \to \bR\) with the property \eqref{eq: bound property}, it holds that
				\begin{align} \label{eq: compact convergence}
				\sup_{\nu \in K } \Big| \sup_{Q\in \ocA^n (\nu)} E^Q \big[ \psi \big] - \sup_{Q\in \ocA^0(\nu)} E^Q \big[ \psi \big] \Big| \to 0, \quad n \to \infty,
				\end{align}
			and the map 
			\[
			\nu \mapsto \sup_{Q\in \ocA^0(\nu)} E^Q \big[ \psi \big]
			\]
			is continuous from \(\W^p (H)\) into \(\bR\).
		\item[\textup{(vii)}] Let 
		\[
		\mathsf{h} (A, B) := \max \Big\{ \sup_{a \in A} \widehat{\w}_q (a, B), \, \sup_{b \in B} \widehat{\w}_q (b, A) \Big\}, \quad A, B \subset \W^\o (\W^\o (\Omega)), 
		\]
		be the Hausdorff metric on the space of nonempty compact subsets of \((\W^\o(\W^\o (\Omega)), \widehat{\w}_\o)\).\footnote{Here, \( \widehat{\w}_\o\) denotes the \(q\)-Wasserstein metric on \(\W^\o (\W^\o(\Omega))\), and recall that the sets \(\ocA^n (\nu^n)\) and \(\ocA^0 (\nu^0)\) are nonempty and compact in \(\W^\o (\W^\o (\Omega))\) thanks to part (i) of the theorem.}
		Then, for every compact set \(K \subset \W^p (H)\), 
		\[
		\sup_{\nu \in K } \mathsf{h} (\ocA^n(\nu), \ocA^0(\nu)) \to 0, \quad n \to \infty.
		\]
  Furthermore, the map \(\nu \mapsto \ocA^0(\nu)\) is continuous from \(\W^p (H)\) into the space of nonempty compact subsets of \(\W^\o(\W^\o (\Omega))\) with the Hausdorff metric topology.
	\end{enumerate}
\end{theorem}

\begin{remarks} \label{rem: main1}
(i)
  The probabilistic main result from Theorem~\ref{theo: main1} is part (vii). It can be seen as {\em set-valued propagation of chaos}. Indeed, in case the sets \(\cA^n (\nu^n) = \{Q^n\}\) and \(\cA^0 (\nu^0) = \{Q^0\}\) are singletons, Theorem~\ref{theo: main1}~(vii) implies that 
		\[
		Q^n \circ \X_n^{-1} \to \delta_{Q^0}
		\]
		in \(\W^\o (\W^\o (\Omega))\), which is classical propagation of chaos. 

Parts (i) and (ii) from Theorem~\ref{theo: main1} provide an existence result and particle approximations for controlled mean field SPDEs. In this regard, they include some results from \cite{bhatt1998interacting,C23} on the uncontrolled situation. 

Meanwhile, parts (iii), (v), and (vi) establish connections with stochastic optimal control theory. They contribute insights into mean field control problems and their respective approximations.
We will continue the discussion in Corollary~\ref{coro: control} below.
  
	\smallskip
\noindent
(ii)
A version of Theorem~\ref{theo: main1} within the variational framework for SPDEs has recently been established in the paper \cite{C23c}. Besides the distinct mathematical framework, the approaches are different in the sense that here we deal with feedback controls, while more general relaxed controls are considered in \cite{C23c}, see also \cite{nicole1987compactification,EKNJ88, ElKa15,LakSIAM17}. 
		  As already mentioned in the introduction, the convexity assumptions from Condition~\ref{cond: main1}~(iii) allow us to translate our feedback setting into a relaxed framework, see Section~\ref{sec: rel con}, especially Lemma~\ref{lem: nonlinear SPDE rel RCR}, below. 
		  In particular, Lemma~\ref{lem: nonlinear SPDE rel RCR} implies that Theorem~\ref{theo: main1} also holds for the relaxed control framework, which allows more flexible randomized controls.

  In general, working with feedback controls resembles with the concept of {\em model ambiguity} or {\em Knightian uncertainty} as considered in the recent papers \cite{C25_SIFIN, C23b,CN22a, CN22b, CN25, neufeld2017nonlinear, NVH, peng2007g}, for example.
		These papers extend the notion of Peng's \(G\)-Brownian motion (see, e.g., \cite{peng2007g}) to more general classes of stochastic processes. The relation to our framework is made precise by \cite[Proposition~2.4]{C23b}. 
  In Section~\ref{app: G BM} below, we discuss this in detail and present how Theorem~\ref{theo: main1}~(vi) can be used to establish a propagation of chaos result for \(G\)-Brownian motions.			
  
 Alongside the feedback and relaxed frameworks, the weak and strong control frameworks are frequently used in the literature; see \cite[Section~4.4]{ElKa15} for a discussion of their relations in the finite dimensional setting. In an infinite dimensional controlled SPDE context, the strong framework has been employed in \cite{CGKPR23}. The weak control framework is intermediate between the feedback and relaxed frameworks in the sense that every feedback control is a weak control, and every weak control can be associated with a relaxed control. Consequently, by Lemma~\ref{lem: nonlinear SPDE rel RCR} below, Theorem~\ref{theo: main1} also holds for the weak and relaxed control frameworks. Note that, even under very strong regularity assumptions on the coefficients, there exist feedback controlled processes that do not admit strong solutions and therefore fall outside the strong framework; see Tsirel'son's example \cite[Section~V.18]{RW}. As a consequence, neither the complete Theorem~\ref{theo: main1} nor all of our methods extend immediately to the strong setting. However, certain implications of Theorem~\ref{theo: main1} can be carried over to the strong setting. To be more precise, under Conditions~\ref{cond: main1} and \ref{cond: main2}, one may show the following two facts:
 	\begin{enumerate}
 		\item[(a)] Every feedback controlled process may be approximated in law by a strongly controlled process. This also holds for mean field systems.
 		\item[(b)] For lower semicontinuous payoffs, the value functions coincide for the feedback and strong control frameworks. 
 	\end{enumerate}
The fact (b) shows that Theorem~\ref{theo: main1}~(v) and (vi) carry over to the strong control framework, while (a) entails that the approximation sequence in (iv) can be chosen to consist of strong controls.
Finally, let us sketch the ideas behind (a) and (b). 
First, (b) follows from (a) and the lower semicontinuity of the payoff, since the value function for the strong control framework is bounded from above by that for the relaxed framework, which coincides with the feedback value function thanks to Lemma~\ref{lem: nonlinear SPDE rel RCR}. To show fact (a) one may argue as in the proof of \cite[Theorem~2.4]{LakSIAM17}, using the approximation result \cite[Lemma~3.11]{CDL_16}, which entails that feedback controls can be approximated by strong controls in an appropriate sense. Then, using the Lipschitz assumptions from Condition~\ref{cond: main2}, the approximation propagates to the controlled processes by a Gronwall argument; see the proof of  \cite[Theorem~2.4]{LakSIAM17} for details in the finite dimensional case.

	\smallskip
\noindent
(iv)
			We now comment on our conditions and relate them to technical assumptions imposed in the recent paper \cite{CGKPR23}. 
		 On the level of the coefficients \(b\) and \(\sigma\), the Assumption~\((\on{A}_{A, b, \sigma})\) from \cite{CGKPR23} imposes Lipschitz and linear growth conditions that are comparable to Condition~\ref{cond: main1}~(ii) and Condition~\ref{cond: main2}. One important difference lies in the norm chosen for the coefficient~\(\sigma\). The paper \cite{CGKPR23} uses the Hilbert--Schmidt norm for \(\sigma\), while we only use it for the modified coefficient \(S \sigma\).  This relaxation covers, for example, stochastic Cauchy problems
		of the form
		\[
		d Y_t = A Y_t \, dt + b (Y_t) \, dt + \sigma \, d W_t
		\] 
		  with \(\sigma \equiv \on{Id}\) and standard cylindrical noise \(W\). In infinite dimensional situations, Hilbert--Schmidt assumptions on the volatility coefficient would exclude the choice \(\sigma \equiv \on{Id}\), which means that the noise needs to be colored. 
	
	In \cite{CGKPR23}, the linearity \(A\) is assumed to generate a pseudo-contraction semigroup. Here, we impose either Condition~\ref{cond: compact assumption} or Condition~\ref{cond: iii alternative}. 
	We need one of these conditions for a tightness argument to establish relative compactness of the sets \(\cA^n (\nu^n)\) and \(\cA^0 (\nu^0)\). Further, we use these assumptions to prove that these sets are nonempty, which we do {\em without} Lipschitz conditions.

	Finally, we remark that in \cite{CGKPR23} the action space \(F\) is only assumed to be a Borel space, while here we presume it to be compact and metrizable (which entails that it is Polish and in particular a Borel space). 
\end{remarks}

Next, we deduce observations related to \cite[Theorems 2.11, 2.12]{LakSIAM17}. 
The first part of the following corollary shows that all accumulation points of \(n\)-state optimal controls are mean field optimal, while the second part explains that every optimal mean field control can be approximated by \(n\)-state nearly optimal controls. 

A version of the following corollary within the variational framework for SPDEs can be found in \cite{C23c}. The proof requires no change and, for reader's convenience, we outline the main steps.
\begin{corollary} \label{coro: control}
	Suppose that the Conditions~\ref{cond: main1} and \ref{cond: main2} hold, and impose either Condition~\ref{cond: compact assumption} or Condition~\ref{cond: iii alternative}.
 Take a continuous function \(\psi \colon \W^\o (\Omega) \to \bR\) with the property \eqref{eq: bound property} and
  initial distributions \((\nu^n)_{n = 0}^\infty \subset \W^p (H)\) with \(\nu^n \to \nu^0\) in \(\W^p (H)\).
	\begin{enumerate}
		\item[\textup{(i)}] Let \((\varepsilon_n)_{n = 1}^\infty \subset \bR_+\) be a sequence such that \(\varepsilon_n \to 0\). For each \(n \in \mathbb{N}\), suppose that \(Q^n \in \mathcal{U}^n (\nu^n)\) is such that 
		\[
		\sup_{Q \in \ocA^n (\nu^n)} E^{Q} \big[ \psi \big] - \varepsilon_n \leq E^{Q^n} \big[ \psi \big].
		\]
		In other words, \(Q^n\) is a so-called \(n\)-state \(\varepsilon_n\)-optimal control.
		Then, the sequence \((Q^n)_{n = 1}^\infty\) is relatively compact in \(\fP^q (\fP^q(\Omega))\) and every accumulation point \(Q^0\) is in \(\mathcal{U}^0 (\nu^0)\) and optimal in the sense that 
		\begin{align} \label{eq: def opti}
			E^{Q^0} \big[ \psi \big] = \sup_{Q \in \mathcal{U}^0 (\nu^0)} E^Q \big[ \psi \big].
		\end{align}
		\item[\textup{(ii)}] 
		Take a measure \(Q^0 \in \mathcal{U}^0 (\nu^0)\) that is optimal (i.e., it satisfies \eqref{eq: def opti}). Then, there are sequences \((\varepsilon_n)_{n = 1}^\infty \subset \bR_+\) and \((Q^n)_{n = 1}^\infty \subset \fP^q (\fP^q(\Omega))\) such that \(\varepsilon_n \to 0\), each \(Q^n\) is an \(n\)-state \(\varepsilon_n\)-optimal control and \(Q^n \to Q^0\) in \(\fP^q (\fP^q(\Omega))\).
	\end{enumerate}
\end{corollary}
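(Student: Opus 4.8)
\textbf{Proof proposal for Corollary~\ref{coro: control}.}

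The plan is to derive both parts directly from Theorem~\ref{theo: main1}, treating the corollary as a soft consequence of the set-convergence results already established there. The key observation throughout is that under the stated conditions, Theorem~\ref{theo: main1}~(i) guarantees that all the sets $\ocA^n(x)$ and $\ocA^0(x)$ are nonempty and compact in $\W^\o(\W^\o(\Omega))$, so suprema of continuous functions satisfying the growth bound \eqref{eq: bound property} are actually attained, and the value functions are finite.

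For part~(i): First I would invoke Theorem~\ref{theo: main1}~(ii) with the constant sequence $x^n \equiv x$, which tells us that any sequence $(Q^n)_{n=1}^\infty$ with $Q^n \in \ocA^n(x)$ is relatively compact in $\W^\o(\W^\o(\Omega))$ and that every $\o$-Wasserstein accumulation point lies in $\ocA^0(x)$. This immediately gives the relative compactness claim and the membership $Q^0 \in \ocA^0(x)$. It remains to check optimality \eqref{eq: def opti}. Let $Q^0$ be an accumulation point along a subsequence $(Q^{N_n})_{n=1}^\infty$; since $\psi$ is continuous and satisfies the $\o$-growth bound, and since $Q^{N_n} \to Q^0$ in $\W^\o(\W^\o(\Omega))$, we have $E^{Q^{N_n}}[\psi] \to E^{Q^0}[\psi]$ (convergence in the $\o$-Wasserstein topology upgrades weak convergence to convergence of integrals of functions with $\o$-growth). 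Combining this with the $\varepsilon_n$-optimality hypothesis and $\varepsilon_n \to 0$, we get
\[
E^{Q^0}[\psi] = \lim_{n\to\infty} E^{Q^{N_n}}[\psi] \geq \limsup_{n\to\infty}\Big(\sup_{Q\in\ocA^{N_n}(x)} E^Q[\psi] - \varepsilon_{N_n}\Big) = \limsup_{n\to\infty} \sup_{Q\in\ocA^{N_n}(x)} E^Q[\psi].
\]
On the other hand, Theorem~\ref{theo: main1}~(vi) applied with the compact set $K = \{x\}$ shows $\sup_{Q\in\ocA^n(x)} E^Q[\psi] \to \sup_{Q\in\ocA^0(x)} E^Q[\psi]$. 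Hence $E^{Q^0}[\psi] \geq \sup_{Q\in\ocA^0(x)} E^Q[\psi]$, and since $Q^0 \in \ocA^0(x)$ the reverse inequality is trivial, giving \eqref{eq: def opti}.

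For part~(ii): Let $Q^0 \in \ocA^0(x)$ be optimal. By Theorem~\ref{theo: main1}~(iv) applied with $x^n \equiv x$, there is a sequence $Q^n \in \ocA^n(x)$ with $Q^n \to Q^0$ in $\W^\o(\W^\o(\Omega))$ (one takes the full sequence, noting that (iv) produces a subsequence of any given subsequence converging to $Q^0$, which by the usual subsequence criterion forces the whole sequence to converge). Again using continuity of $\psi$ and the $\o$-growth bound, $E^{Q^n}[\psi] \to E^{Q^0}[\psi] = \sup_{Q\in\ocA^0(x)} E^Q[\psi]$. Now define
\[
\varepsilon_n := \Big(\sup_{Q\in\ocA^n(x)} E^Q[\psi] - E^{Q^n}[\psi]\Big) \vee 0 \geq 0.
\]
By construction $Q^n$ is $n$-state $\varepsilon_n$-optimal. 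Finally, $\varepsilon_n \to 0$: the first term converges to $\sup_{Q\in\ocA^0(x)} E^Q[\psi]$ by Theorem~\ref{theo: main1}~(vi) with $K=\{x\}$, the second term converges to the same limit as shown, so the difference tends to $0$ and the positive-part operation preserves this. This completes the proof.

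\textbf{Main obstacle.} There is no deep obstacle here — the corollary is genuinely a corollary, and the paper itself remarks that the proof requires no change from the variational version in \cite{C23c}. The only point requiring a little care is the upgrade from weak convergence to convergence of $\psi$-integrals: one must use that convergence in $\W^\o(\W^\o(\Omega))$ (rather than mere weak convergence) is what licenses passing to the limit in $E^{Q^n}[\psi]$ for $\psi$ with $\o$-growth, via uniform integrability of $\nu \mapsto \|\nu\|_\o^\o$ under $\o$-Wasserstein convergent sequences. A second minor bookkeeping point is the subsequence argument in part~(ii): Theorem~\ref{theo: main1}~(iv) is stated with a "subsequence of a subsequence" quantifier precisely so that it yields full-sequence convergence, and I would spell this out explicitly to avoid any circularity.
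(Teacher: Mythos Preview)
Your proof is correct and follows essentially the same approach as the paper: both parts are derived from Theorem~\ref{theo: main1}~(ii), (iv), and (vi), together with the fact that \(\o\)-Wasserstein convergence permits passing to the limit in \(E^Q[\psi]\) for \(\psi\) with \(\o\)-growth (the paper cites \cite[Proposition~A.1]{LakSPA15} for this step). The only cosmetic differences are that the paper uses a sandwich argument in part~(i) to show the full sequence \(E^{Q^n}[\psi]\) converges before passing to accumulation points, and in part~(ii) it omits the \(\vee\,0\) from \(\varepsilon_n\) (harmless, since \(Q^n\in\ocA^n(x)\) already guarantees nonnegativity).
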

\begin{proof}
	(i). By Theorem \ref{theo: main1} (vi), we have 
	\[
	\sup_{Q \in \ocA^0 (\nu^0)} E^Q \big[ \psi \big] \leftarrow \sup_{Q \in \ocA^n (\nu^n)} E^{Q} \big[ \psi \big] - \varepsilon_n \leq E^{Q^n} \big[ \psi \big] \leq \sup_{Q \in \ocA^n (\nu^n)} E^Q \big[ \psi \big] \to \sup_{Q \in \ocA^0 (\nu^0)} E^Q \big[ \psi \big],
	\]
	which implies
	\[
	\lim_{n \to \infty} E^{Q^n} \big[ \psi \big] = \sup_{Q \in \ocA^0 (\nu^0)} E^Q \big[ \psi \big].
	\]
	By part (ii) of Theorem \ref{theo: main1}, \((Q^n)_{n = 1}^\infty\) is relatively compact in \(\fP^q (\fP^q (\Omega))\) and every accumulation point \(Q^0\) is in \(\ocA^0 (\nu^0)\).
	Thus, by \cite[Proposition~A.1]{LakSPA15}, we get that
	\[
	E^{Q^0} \big[ \psi \big] = \lim_{n \to \infty} E^{Q^n} \big[ \psi \big] = \sup_{Q \in \ocA^0 (\nu^0)} E^Q \big[ \psi \big].
	\]
	This is the claim.
	
	\smallskip
	(ii). By Theorem \ref{theo: main1} (iv), there exists a sequence \((Q^n)_{n = 1}^\infty\) such that \(Q^n \in \ocA^n (\nu^n)\) and \(Q^n \to Q^0\) in \(\fP^q (\fP^q (\Omega))\). Using that \(Q^0\) is optimal, Theorem~\ref{theo: main1}~(vi) and \cite[Proposition~A.1]{LakSPA15}, we obtain that
	\[
	\lim_{n \to \infty} \sup_{Q \in \ocA^n (\nu^n)} E^{Q} \big[ \psi \big] = E^{Q^0} \big[ \psi \big] = \lim_{n \to \infty} E^{Q^n} \big[ \psi \big].
	\]
	Consequently, 
	\[
	0 \leq \varepsilon^n := \sup_{Q \in \ocA^n (\nu^n)} E^{Q} \big[ \psi \big] - E^{Q^n} \big[ \psi \big]\to 0,
	\]
	which shows that \(Q^n\) is an \(n\)-state \(\varepsilon^n\)-optimal control. The claim is proved.
\end{proof}

\begin{remark} \label{rem: other control frameworks}
	As we explain in Remark~\ref{rem: main1}~(ii), Theorem~\ref{theo: main1}, and hence also Corollary~\ref{coro: control}, also hold for the weak and relaxed frameworks. Moreover, using (a) and (b) from Remark~\ref{rem: main1}~(ii), one may also formulate versions of Corollary~\ref{coro: control} for strong control settings. For example, any sequences of \(n\)-state nearly optimal strong controls has an accumulation point that is optimal for the relaxed mean field system. 
\end{remark}

\section{Propagation of Chaos for \(G\)-Brownian motion} \label{app: G BM}
As discussed in part (ii) of Remark~\ref{rem: main1}, our framework is closely related to the idea of Peng's \(G\)-Brownian motion (see \cite{peng2007g} for an overview). In this short section, we explain the relation in more detail and deduce a propagation of chaos result for \(G\)-Brownian motion with drift interaction from Theorem~\ref{theo: main1}. We emphasize that our presentation does not aim for the most general result but for an illustration. 

\smallskip
Let $H:= \mathbb R$ and \(\CLM\) be the set of all probability measures \(P\) on \((\Omega, \cF, (\cF_t)_{t \in [0, T]})\), such that the coordinate process \(X\) is a continuous local \(P\)-martingale with absolutely continuous quadratic variation process \(\langle X, X \rangle\) with respect to $P$. 
Further, take two non-negative numbers \(a_* < a^*\) and set  
\[
\cR (x) := \Big\{ P \in \CLM \colon P (X_0 = x) = 1, \ (dt \otimes P)\text{-a.e. } d \langle X, X\rangle / dt \in [a_*, a^*] \Big\}.
\]

\smallskip
A \(G\)-Brownian motion is a family \(\{\mathcal{E}_x \colon x \in \bR\}\) of sublinear expectations on the space of upper semianalytic functions \(\Omega \to [- \infty, \infty]\) that is given by 
\[
\mathcal{E}_x (\varphi) := \sup_{P \in \cR (x)} E^P \big[ \varphi \big], 
\]
with \(\varphi \colon \Omega  \to [- \infty, \infty]\) upper semianalytic, i.e., \(\{\varphi > c\}\) is analytic for every \(c \in \bR\).

\smallskip
The set \(\cR (x)\) can be translated to our language of feedback controls. Let \(\cA (x)\) be the set of all laws of solution processes to the SDE 
\[
d Y_t = \sqrt{ \f_t (Y)} \, d W_t, \quad Y_0 = x, 
\]
where \(\f \colon [0, T] \times \Omega \to [a_*, a^*]\) is an arbitrary predictable process and \(W\) is a one-dimensional standard Brownian motion. It is clear that \(\cA (x) \subset \cR (x)\). Conversely, \cite[Proposition~2.4]{C23b} shows that \(\cR (x) \subset \cA (x)\). Hence, the \(G\)-Brownian motion has a representation as value function in a feedback control setting, namely 
\[
\cE_x (\varphi) = \sup_{P \in \cA (x)} E^P \big[ \varphi \big].
\]

Using the idea behind this feedback control representation, we can deduce a propagation of chaos result for a system of \(G\)-Brownian motions with drift interaction. 

\smallskip 
For \(n \in \mathbb{N}\), let \(\fPasn\) be the set of all probability measures \(P\) on \((\Omega^n, \cF^n, (\cF^n_t)_{t \in [0, T]})\) such that the coordinate process \(X = (X^1, \dots, X^n)\) is a continuous \(P\)-semimartingale with absolutely continuous semimartingale characteristics, whose densities we denote by \((b^P, c^P)\). 
Let \(h \colon \bR \to \bR\) be a bounded Lipschitz continuous function and define 
\begin{align*}
	\cR^n (x) := \Big\{ P \in \fPasn \colon P (X_0^i = x) = 1, \ 
& (dt \otimes P)\text{-a.e. } b^{P, i} = \frac{1}{n} \sum_{k = 1}^n h (X_{\cdot}^k),\\ &c^{P} \in \on{diag}^n, \ c^{P, ii} \in [a_*, a^*], \ i = 1, \dots, n \Big\},
\end{align*}
and
\begin{align*}
	\cR^0 (x) := \Big\{ P \in \mathfrak{S}^{\textup{ac}, 1}_\textup{sem} \colon P (X_0 = x) = 1, \
	(dt \otimes P)\text{-a.e. } b^P = E^P \big[ h (X_\cdot) \big],\ c^P \in [a_*, a^*] \Big\},
\end{align*}
where \(\on{diag}^n\) denotes the set of real-valued \(n \times n\) diagonal matrices.
Finally, for a bounded upper semianalytic function \(\varphi \colon \Omega \to \bR\), we define 
\begin{align*}
\mathcal{E}^n_x \circ \X_n^{-1} (\varphi) &:= \mathcal{E}^n_x \circ \X_n^{-1} (\mu \mapsto E^\mu[ \varphi ])
:= \sup_{P \in \cR^n (x)} \frac{1}{n} \sum_{k = 1}^n E^{P} \big[ \varphi (X^k) \big], \\ \mathcal{E}_x^0 (\varphi) &:= \sup_{P \in \cR^0 (x)} E^P \big[ \varphi \big].
\end{align*}
The family \(\{\mathcal{E}^n_x \circ \X_n^{-1} \colon x \in \bR\}\) consists of empirical distributions of \(G\)-Brownian motions with drift interaction, and \(\{\mathcal{E}^0_x \colon x \in \bR\}\) is a \(G\)-Brownian motion with McKean--Vlasov drift. 

\smallskip
We have the following propagation of chaos result:
\begin{theorem}
    For every bounded continuous function \(\varphi \colon \Omega \to \bR\), 
    \begin{align} \label{eq: conv G BM}
        \mathcal{E}^n_x \circ \X_n^{-1} (\varphi) \to \mathcal{E}^0_x (\varphi), \quad n \to \infty,
    \end{align}
    uniformly in \(x\) on compact subsets of \(\bR\). 
\end{theorem}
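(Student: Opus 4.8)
The plan is to recognize the $G$-Brownian system as a special case of the controlled mean field SPDE framework from Section~\ref{sec: main1} and to invoke Theorem~\ref{theo: main1}~(vi). First I would set $H = U = \bR$, take $A = 0$ (so $S_t = \on{Id}$, and Condition~\ref{cond: compact assumption} fails but Condition~\ref{cond: iii alternative} holds trivially with a single ``basis vector'' $e_1 = 1$, $\lambda_1$ arbitrary, since the coefficients are bounded), and choose the action space $F = [0,1]$, which is compact metrizable. Define the coefficients
\[
b(f, t, \omega, \mu) := \int h(\alpha_{\cdot}) \, \mu(d\alpha), \qquad \sigma(f, t, \omega, \mu) := \sqrt{a(f)},
\]
where $a(u) = a_* + u(a^* - a_*)$ as in the section. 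Note $b$ does not actually depend on $f$, $t$, or $\omega$, and $\sigma$ depends only on $f$; I would check that these are continuous (continuity of $\mu \mapsto \int h\,d\mu$ on $\W^{\o}(\Omega)$ follows since $h$ is bounded Lipschitz, hence continuous and bounded on $\Omega$), satisfy the linear growth bounds \eqref{eq: LG1}--\eqref{eq: LG2} (both coefficients are bounded, using boundedness of $h$ and of $a$ on $[0,1]$), and that Condition~\ref{cond: main2} holds ($b$ is Lipschitz in $\mu$ with respect to $\w_1 \le \w_p$ because $h$ is Lipschitz, and $\sigma$ has no $\omega$ or $\mu$ dependence). The convexity Condition~\ref{cond: main1}~(iii) is the key structural point: for fixed $(t,\omega,\mu)$ the set $\{(b, \sigma\sigma^*)(f,\cdot) : f \in [0,1]\} = \{(\text{const}, a(f)) : f \in [0,1]\} = \{\text{const}\} \times [a_*, a^*]$, which is an interval, hence convex. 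So all hypotheses of Theorem~\ref{theo: main1} are met; I may take $\o = 0$ since all coefficients are bounded, so moments are automatic and $\W^0 = \fP$.

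Next I would identify the sets. I claim $\cA^n(x) = \cR^n(x)$ and $\cA^0(x) = \cR^0(x)$ under this choice of coefficients, up to the usual equivalence between mild/weak SDE solutions and the martingale/semimartingale characterization. For $\cA^n$: a law $P$ lies in $\cA^n(x)$ iff (on a possibly extended basis) there are predictable $\f^k : [0,T]\times\Omega^n \to [0,1]$ and independent Brownian motions $W^k$ with $X^k_t = x + \int_0^t \frac1n\sum_{j} h(X^j_\cdot)\,ds + \int_0^t \sqrt{a(\f^k_s)}\,dW^k_s$; by the standard semimartingale-characteristics calculus this is exactly the statement that $X$ is a continuous $P$-semimartingale with drift density $b^{P,i} = \frac1n\sum_k h(X^k_\cdot)$ and diffusion matrix $c^P$ diagonal with entries $a(\f^i) \in [a_*,a^*]$ — using the converse direction (representing any such $P$ via controls) exactly as in \cite[Proposition~2.4]{C23b}, which handles the one-dimensional case and extends coordinatewise because the $W^k$ are independent and $c^P$ is diagonal. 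The analogous identification for $\cA^0(x) = \cR^0(x)$ uses that $P^X_s$ only enters the drift through $\int h(\alpha_\cdot)\,P^X_s(d\alpha) = E^P[h(X_\cdot)]$ (here I use that $h(X_\cdot)$ is $\cF_T$-measurable; if one wants the drift density to depend only on the stopped path one notes $h$ may be taken path-dependent, but in the stated theorem $h\colon\bR\to\bR$ so $h(X_\cdot)$ should read $h(X_T)$ or be understood via the path functional — I would align the notation with the section's definition of $\cR^0$).

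Finally, I would translate the convergence. For bounded continuous $\varphi\colon\Omega\to\bR$, define $\psi\colon\fP(\Omega)\to\bR$ by $\psi(\nu) := E^\nu[\varphi] = \int\varphi\,d\nu$; this is continuous on $\fP(\Omega) = \W^0(\Omega)$ by definition of the weak topology, and bounded, so \eqref{eq: bound property} holds with $\o = 0$. Then by the definitions,
\[
\sup_{Q\in\ocA^n(x)} E^Q[\psi] = \sup_{P\in\cA^n(x)} E^{P\circ\X_n^{-1}}[\nu\mapsto E^\nu[\varphi]] = \sup_{P\in\cR^n(x)} \frac1n\sum_{k=1}^n E^P[\varphi(X^k)] = \mathcal{E}^n_x\circ\X_n^{-1}(\varphi),
\]
while $\sup_{Q\in\ocA^0(x)} E^Q[\psi] = \sup_{P\in\cA^0(x)} E^P[\varphi] = \mathcal{E}^0_x(\varphi)$ since $\ocA^0(x)$ consists of probability measures supported on $\cA^0(x)$ and the supremum of $E^Q[\psi]$ over such $Q$ equals $\sup_{P\in\cA^0(x)}\psi(P) = \sup_{P\in\cR^0(x)}E^P[\varphi]$ (the sup over Dirac masses at points of $\cA^0(x)$ already attains it, and no $Q$ can exceed it). Applying Theorem~\ref{theo: main1}~(vi) with the constant sequence $x^n \equiv x$ (or rather with $K$ an arbitrary compact subset of $\bR$) yields \eqref{eq: conv G BM} uniformly on compacts, which is the claim.

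I expect the main obstacle to be the rigorous identification $\cA^n(x) = \cR^n(x)$ and $\cA^0(x) = \cR^0(x)$ — specifically, the converse inclusion $\cR^n(x) \subset \cA^n(x)$, i.e., showing every measure with the prescribed semimartingale characteristics arises from some feedback control and Brownian motions. This is precisely the content of Filippov-type measurable selection / \cite[Proposition~2.4]{C23b}, but care is needed to run it simultaneously for $n$ coordinates with a diagonal diffusion matrix and to handle the standard-extension subtlety (one may need to enlarge the space to support the driving Brownian motions when $a(\f^i)$ degenerates). A secondary technical point is reconciling the path-dependent drift $\frac1n\sum_k h(X^k_\cdot)$ with the predictability requirement on $b$ in Section~\ref{sec: main1}: since $h(X^k_\cdot)$ depends on the whole path up to $T$, strictly speaking the drift must be read as evaluated at the stopped path, i.e., the section's $h(X^k_\cdot)$ should be $h(X^k_{\cdot\wedge t})$ at time $t$ — with $h\colon\bR\to\bR$ this means $h(X^k_t)$, which is fine and predictable; I would simply make this reading explicit.
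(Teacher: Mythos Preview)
Your strategy matches the paper's exactly: recast the $G$-Brownian system in the feedback-control SPDE framework with $H = \bR$, $F = [0,1]$, $A = 0$, $b(f,t,\omega,\mu) = E^\mu[h(X_t)]$, $\sigma(f,t,\omega,\mu) = \sqrt{a(f)}$; identify $\cA^n(x) = \cR^n(x)$ and $\cA^0(x) = \cR^0(x)$ via the argument behind \cite[Proposition~2.4]{C23b}; and apply Theorem~\ref{theo: main1}~(vi) with $\psi(\mu) = E^\mu[\varphi]$. Your handling of the equality $\sup_{Q\in\ocA^0(x)} E^Q[\psi] = \sup_{P\in\cA^0(x)} E^P[\varphi]$ and your reading of $h(X_\cdot)$ as $h(X_t)$ are both correct and match what the paper does.

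There is, however, one error in your condition check. You claim Condition~\ref{cond: compact assumption} fails and Condition~\ref{cond: iii alternative} holds; both assertions are wrong. On the one-dimensional space $H = \bR$ every bounded linear operator is compact, so $S_t = \on{Id}$ is trivially compact for all $t > 0$ and Condition~\ref{cond: compact assumption} \emph{does} hold --- this is precisely what the paper invokes. Conversely, Condition~\ref{cond: iii alternative} requires $\lambda_k > 0$ together with $S_t^* e_k = e^{-\lambda_k t} e_k$; since here $S_t^* = \on{Id}$, this forces $\lambda_1 = 0$, violating the strict positivity, so your ``$\lambda_1$ arbitrary'' is not admissible and Condition~\ref{cond: iii alternative} is \emph{not} satisfied. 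Once you swap to Condition~\ref{cond: compact assumption}, the remainder of your argument goes through and coincides with the paper's proof.
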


\begin{proof}
The strategy of proof is the following: we translate the statement into the feedback control framework from Section~\ref{sec: main1} and then deduce the claim from Theorem~\ref{theo: main1}.

\smallskip 
{\em Step 1: Translation.} 
Let \(\cA^n (x)\) be the set of all laws of solution processes \(Y = (Y^1, \dots, Y^n)\) to the SDE 
\begin{align*}
    d Y^k_t = \frac{1}{n} \sum_{i = 1}^n h (Y^i_t) \, dt + \sqrt{ \f^k_t (Y)} \, d W^k_t, \quad Y^k_0 = x, 
\end{align*}
where \(\f^1, \dots, \f^n \colon [0, T] \times \Omega^n \to [a_*, a^*]\) are arbitrary predictable processes and \(W^1, \dots, W^n\) are independent one-dimensional standard Brownian motions. Further, define \(\cA^0 (x)\) to be the set of all laws \(P\) of solution processes to the McKean--Vlasov SDE 
\[
d Y_t = E^P \big[ h (X_t) \big] \, dt + \sqrt{\f_t (Y)} \, d W_t, \quad Y_0 = x, 
\]
where \(\f \colon [0, T] \times \Omega \to [a_*, a^*]\) is an arbitrary predictable process and \(W\) is a one-dimensional standard Brownian motion. 

Clearly, \(\cA^n (x) \subset \cR^n (x)\) and \(\cA^0 (x) \subset \cR^0 (x)\). By virtue of the proof for \cite[Proposition~2.4]{C23}, also the converse inclusions hold and consequently, 
\[
\cA^n (x) = \cR^n (x), \quad \cA^0 (x) = \cR^0 (x).
\]
This implies that
\begin{align} \label{eq: trans}
	\sup_{P\in \cA^n (x)} \frac{1}{n} \sum_{k = 1}^n E^P \big[ \varphi (X^k) \big] = \mathcal{E}^n_x \circ \X_n^{-1} (\varphi), \qquad  
	\sup_{P \in \cA^0 (x)} E^P \big[ \varphi \big] = \mathcal{E}^0_x (\varphi).
\end{align}

{\em Step 2: Conclusion.}
The left hand expectations in \eqref{eq: trans} correspond to the framework from Section~\ref{sec: main1} with \(H \equiv \bR\), \(F \equiv [a_*, a^*],\) \(A \equiv 0,\) \(b (f, t, \omega, \mu) \equiv E^\mu[ h (X_t) ]\) and \(\sigma (f, t, \omega, \mu) \equiv \sqrt{f}\).
    It is not hard to check that the Conditions~\ref{cond: main1}, \ref{cond: compact assumption} and \ref{cond: main2} hold in this setting. Hence, using Theorem~\ref{theo: main1}~(vi) with \(\psi (\mu) \equiv E^\mu [ \varphi ]\) implies the claim. 

\smallskip
    To be precise, let us shortly explain that the r.h.s in \eqref{eq: conv G BM} coincides with the r.h.s. of \eqref{eq: compact convergence} that is given by 
    \[
    \sup \Big\{ E^Q\big[ \psi \big] \colon Q (\cA^0 (x)) = 1\Big\} = \sup \Big\{ \int E^\mu \big[ \varphi \big] Q (d \mu) \colon Q (\cA^0 (x)) = 1 \Big\}.
    \]
    First, we clearly have 
    \[
    \sup \Big\{ \int E^\mu \big[ \varphi \big] Q (d \mu) \colon Q (\cA^0 (x)) = 1 \Big\} \leq \sup_{P \in \cA^0 (x)} E^P \big[ \varphi \big] = \mathcal{E}^0_x (\varphi).
    \]
    On the other hand, Theorem~\ref{theo: main1}~(i) implies that the set \(\cA^0 (x)\) is nonempty and compact (in a suitable Wasserstein space) and hence, there exists a measure \(P^* \in \cA^0 (x)\) such that 
    \[
    \sup_{P \in \cA^0 (x)} E^P \big[ \varphi \big] = E^{P^*} \big[ \varphi  \big],
    \]
    where we use that \(P \mapsto E^P [ \varphi ]\) is continuous (in the weak and consequently, also in the stronger Wasserstein topology). 
    Now, for \(Q^* := \delta_{P^*}\), we get that
    \begin{align*}
    \mathcal{E}^0_x (\varphi) = \sup_{P \in \cA^0 (x)} E^P \big[ \varphi  \big] = \int E^\mu \big[ \varphi \big] Q^* (d \mu) \leq  \sup \Big\{ \int E^\mu \big[ \varphi \big] Q (d \mu) \colon Q (\cA^0 (x)) = 1 \Big\}.
    \end{align*}
    Consequently, the sublinear expectations on the r.h.s. of \eqref{eq: compact convergence} and \eqref{eq: conv G BM} coincide.
\end{proof}

	\section{Proof of Theorem \ref{theo: main1}} \label{sec: pf}
	
	In this section we prove our main Theorem \ref{theo: main1}. We start with some technical preparations, connecting our setting to relaxed controls. Thereafter, we proceed with the proof of Theorem~\ref{theo: main1} in a chronological order.  
	
	\subsection{A First Step Towards Relaxed Control Rules} \label{sec: relaxed setting}

	Let \(\m ([0, T] \times F)\) be the set of all Radon measures on \([0, T] \times F\) and define \(\m\) as its subset of all measures in \(\m ([0, T] \times F)\) whose projections on \([0, T]\) coincide with the Lebesgue measure. 
   We endow \(\m\) with the vague (equivalently, weak) topology, which turns it into a compact metrizable space (\cite[Theorem~2.2]{EKNJ88}). The Borel \(\sigma\)-field on \(\m\) is denoted by \(\mathcal{M}\) and the identity map on \(\m\) is denoted by \(M\).
		Further, we define the \(\sigma\)-fields 
		\[
		\mathcal{M}_t := \sigma \big(M (C) \colon C \in \mathcal{B}([0, t] \times F)\big), \quad t \in [0, T].
		\]
		On the product space \(\Theta:= \Omega \times \m\) we work with the product \(\sigma\)-field \(\mathcal{O} := \cF \otimes \mathcal{M}\) and the product filtration \(\mathbf{O} := (\mathcal{O}_t)_{t \in [0, T]}\) given by \(\mathcal{O}_t := \cF_t \otimes \mathcal{M}_t\). With little abuse of notation, we denote the coordinate map on \(\Theta\) by \((X, M)\).
		
		For \(g \in C^2_c (\bR; \bR), y^* \in D (A^*)\) and \((f, t, \omega, \nu) \in F \times [0, T] \times \Omega \times \W^\o (\Omega)\), we set 
		\begin{align*}
		\mathcal{L}_{g, y^*} (f, t, \omega, \nu) := g' (\langle \omega (t), y^*\rangle_H) \big( \langle \omega (t)&, A^* y^* \rangle_H + \langle b (f, t, \omega, \nu), y^*\rangle_H \big) 
		\\&+ \tfrac{1}{2} g'' (\langle \omega (t), y^* \rangle_H) \| \sigma^* (f, t, \omega, \nu) y^* \|^2_U.
		\end{align*}
	Finally, for \(g \in C^2_c (\bR; \bR), f \in F, \omega^1, \dots, \omega^n \in \Omega, y^1, \dots, y^n \in D (A^*), \nu \in \W^\o (\Omega)\) and \(i = 1, \dots, n\), we set 
	\begin{align*}
		\mathcal{L}^i_{g, y^1, \dots, y^n} (f, t&, \omega^1, \dots, \omega^n, \nu) 
		\\&:= 
		g' \Big( \sum_{k = 1}^n \langle \omega^k (t), y^k \rangle_H \Big) \big( \langle \omega^i (t), A^* y^i \rangle_H + \langle b (f, t, \omega^i, \nu), y^i \rangle_H \big) 
		\\&\hspace{4cm}+ \tfrac{1}{2} g'' \Big( \sum_{k = 1}^n \langle \omega^k (t), y^k \rangle_H \Big) \| \sigma^* (f, t, \omega^i, \nu) y^i \|^2_U.
	\end{align*}
		As we will show below, the following two definitions are equivalent to a relaxed control framework.

		\begin{definition} \label{def: C MK}
			For \(\nu \in \W^p (H)\), we define \(\cC^0 (\nu)\) to be the set of all measures \(Q \in \fP(\Theta)\) 
		with the following properties:
		\begin{enumerate}
			\item[\textup{(i)}] \(Q \circ X^{-1} \in \W^p (\Omega)\);
			\item[\textup{(ii)}] there exists an \(F\)-valued \(\mathbf{O}\)-predictable process \(\xi\) such that, for all \(g \in C^2_c (\bR; \bR)\) and \(y^* \in D(A^*)\), \(Q\)-a.s.
			\[
				\int_0^\cdot \mathcal{L}_{g, y^*} (\xi_s, s, X, Q^X_s) \, ds = \int_0^\cdot \int \mathcal{L}_{g, y^*} (f, s, X, Q^X_s) \, M (ds, df),
			\]
			where \(Q^X_s := Q \circ X^{-1}_{\cdot \wedge s}\);
			\item[\textup{(iii)}] possibly on a standard extension of \((\Theta, \mathcal{O}, \mathbf{O}, Q)\), there exists a cylindrical standard Brownian motion \(W\) such that a.s., for all \(t \in [0, T]\),
\begin{align*}
    X_t = S_t X_0 &+ \int_0^t S_{t - s} b (\xi_s, s, X, Q^X_s) \, ds 
    + \int_0^t S_{t - s} \sigma (\xi_s, s, X, Q^X_s) \, d W_s, 
\end{align*}
and \(X_0 \sim \nu\).
			\end{enumerate}
	Furthermore, we set 
	\[
	\cR^0 (\nu) := \big\{ P \in \fP (\fP(\Theta)) \colon P (\cC^0 (\nu)) = 1\big\}.
	\]
	\end{definition}

For \(n \in \mathbb{N}\), define
\[
\Y_n\colon \Theta^n \to \fP (\Theta), \quad \Y_n (\theta^1, \dots, \theta^n) := \frac{1}{n} \sum_{k = 1}^n \delta_{\theta^k}.
\]
Compared to \(\X_n\) from \eqref{eq: X_n def}, the empirical distribution \(\Y_n\) allows us to incorporate the particle systems together with their controls. 

		\begin{definition}\label{def: Cn}
	For \(n \in \mathbb{N}\) and \(\nu \in \W^p (H)\), let \(\cC^n (\nu)\)  be the set of all \(Q \in \fP (\Theta^n)\) with the following properties:
	\begin{enumerate}
		\item[\textup{(i)}] there exist  \(F\)-valued \(\mathbf{O}^n\)-predictable processes \(\xi^1, \dots, \xi^n\) such that, for all \(k = 1, \dots, n,\) \(g \in C^2_c (\bR; \bR)\) and \(y^1, \dots, y^n \in D (A^*)\), \(Q\)-a.s.
		\begin{align*}
			\int_0^\cdot \mathcal{L}^k_{g, y^1, \dots, y^n} &(\xi_s^k, s, X, \X_n (X_{\cdot \wedge s})) \, ds 
			\\&= \int_0^\cdot \int \mathcal{L}^k_{g, y^1, \dots, y^n} (f, s, X, \X_n (X_{\cdot \wedge s})) \, M^k (ds, df);
		\end{align*}
		\item[\textup{(ii)}] possibly on a standard extension of \((\Theta^n, \mathcal{O}^n, \mathbf{O}^n, Q)\), there exist independent cylindrical standard Brownian motions \(W^1, \dots, W^n\) such that, for \(k = 1, \dots, n\), a.s., for all \(t \in [0, T]\),
		\begin{align*}
			X^k_t = S_t X^k_0 &+ \int_0^t S_{t - s} b (\xi^k_s, s, X^k, \X_n (X_{\cdot \wedge s})) \, ds 
			\\&+ \int_0^t S_{t - s} \sigma (\xi^k_s, s, X^k, \X_n (X_{\cdot \wedge s})) \, d W^k_s,
		\end{align*}
		and \((X^1_0, \dots, X^n_0) \sim \bigotimes_{1}^n \nu\).\footnote{That means, \(X^1_0, \dots, X^n_0\) are i.i.d. with distribution \(\nu\).}
	\end{enumerate}
	Further, we define 
	\[
	\cR^n (\nu) := \big\{ Q \circ \Y_n^{-1} \colon Q \in \cC^n (\nu) \big\} \subset \fP (\fP (\Theta)).
	\]
\end{definition}

	\subsection{Some Moment Estimates}
We start with moment estimates that follow from the linear growth Condition \ref{cond: main1} (ii) by a standard Gronwall argument. 
\begin{lemma} \label{lem: moment estimates} 
	Assume that Condition \ref{cond: main1} \textup{(ii)} holds and let \(K \subset \W^p (H)\) be a bounded set in the sense that \(\sup_{\nu \in K} \int \|x\|^p_H \, \nu (dx) < \infty\). Then, 
		\begin{align} \label{eq: moment estimate lemma i}
		\sup \Big\{ \frac{1}{n} \sum_{k = 1}^n E^Q \big[ \|X^k\|_T^p \big] \colon Q \in \cC^n (\nu), n \in \mathbb{N}, \nu \in K \Big\} < \infty,
		\end{align}
	and
		\begin{align} \label{eq: moment estimate lemma ii}
		\sup \Big\{ E^Q \big[ \|X\|_T^p \big] \colon Q \in \cC^0 (\nu), \nu \in K \Big\} < \infty.
		\end{align}
\end{lemma}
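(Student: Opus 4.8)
The plan is to establish \eqref{eq: moment estimate lemma i}, as \eqref{eq: moment estimate lemma ii} follows by the same argument (indeed, as the special case $n=1$ modulo the distinct interaction term $Q^X_s$ versus $\X_n(X_{\cdot\wedge s})$, which is handled identically). First I would fix $Q \in \cC^n(x)$ with $x \in K$ and, using the mild-form representation from Definition~\ref{def: Cn}~(ii), write for each $k$ and each $t \in [0,T]$
\[
X^k_t = S_t x + \int_0^t S_{t-s} b(\xi^k_s, s, X^k, \X_n(X_{\cdot\wedge s}))\, ds + \int_0^t S_{t-s}\sigma(\xi^k_s, s, X^k, \X_n(X_{\cdot\wedge s}))\, dW^k_s.
\]
Taking $\|\cdot\|_H$-norms, using $\|S_t\|_{L(H)} \le M$ on $[0,T]$ for the drift term and the deterministic factorization bound on the semigroup term, raising to the $p$-th power and using the elementary inequality $(a+b+c)^p \le 3^{p-1}(a^p+b^p+c^p)$, I get three contributions. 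The first is bounded by $C(1 + \|x\|_H^p) \le C(1 + \sup_{y\in K}\|y\|_H^p)$, uniformly in $K$. For the drift integral, Jensen's inequality in $s$ together with the linear growth bound \eqref{eq: LG1} gives a bound by $C\int_0^t (1 + \|X^k\|_s^p + \|\X_n(X_{\cdot\wedge s})\|_p^p)\, ds$, and crucially $\|\X_n(X_{\cdot\wedge s})\|_p^p = \frac1n\sum_{j=1}^n \|X^j\|_s^p$, which is why the averaged quantity $\frac1n\sum_k E^Q[\|X^k\|_t^p]$ is the natural object to run Gronwall on.

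The main technical step is the stochastic convolution term $Z^k_t := \int_0^t S_{t-s}\sigma(\cdots)\, dW^k_s$. Here I would invoke the standard factorization/maximal-inequality machinery for stochastic convolutions (the Da Prato--Kwapień--Zabczyk factorization, using the constants $\alpha \in (0,\tfrac12)$ and $p > 1/\alpha$ fixed in \eqref{eq: constants} together with the integrability \eqref{eq: DZ cond}): there is a constant $C = C(p,\alpha,T,\j)$ such that
\[
E^Q\Big[\sup_{t\le T}\|Z^k_t\|_H^p\Big] \le C\, E^Q\Big[\int_0^T \|S_{T-s}\sigma(\xi^k_s, s, X^k, \X_n(X_{\cdot\wedge s}))\|_{L_2(U,H)}^p\, ds\Big],
\]
and then \eqref{eq: LG2} bounds the integrand by $\j(s)^p(1 + \|X^k\|_s + \|\X_n(X_{\cdot\wedge s})\|_p)^p$. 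Since $\int_0^T \j(s)^p\,ds < \infty$ (this follows from \eqref{eq: DZ cond} and $s \mapsto s^{-\alpha p}$ being... actually one uses Hölder: $\int_0^T \j(s)^p ds = \int_0^T (\j(s)/s^\alpha)^p s^{\alpha p} ds$, finite since $(\j/s^\alpha)^2$ is integrable and $s^{\alpha p}$ is bounded — more carefully one splits via the exponent bookkeeping in the factorization lemma), and $\j(s)^p \le C\j(s)^2\cdot(\text{bounded})$ is not quite right in general, so I would instead carry the factorization exponents honestly and reduce everything to the single hypothesis \eqref{eq: DZ cond}. In any case the upshot is a bound $E^Q[\|Z^k\|_T^p] \le C\int_0^T \j(s)^p(1 + \|X^k\|_s^p + \frac1n\sum_j\|X^j\|_s^p)\,ds$ with an $L^p$-in-time weight.

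Combining the three bounds, setting $\phi_n(t) := \frac1n\sum_{k=1}^n E^Q[\|X^k\|_t^p]$ (first checking $\phi_n(t) < \infty$ for each $t$ — either by a localization/stopping-time argument replacing $\|X^k\|_t^p$ by $\|X^k\|_{t\wedge\tau_R}^p$, running Gronwall, and then letting $R\to\infty$ by monotone convergence, or by noting the factorization estimate already yields finiteness), I obtain
\[
\phi_n(t) \le C_1(1 + \sup_{y\in K}\|y\|_H^p) + C_2 \int_0^t \big(1 + g(s)\big)\,\phi_n(s)\, ds,
\]
where $g \in L^1([0,T])$ absorbs both the constant-$1$ from linear growth and the weight $\j(s)^p$, and the constants $C_1, C_2$ depend only on $p, \alpha, T, \j, C$ (the linear-growth constant) — not on $n$ or on the particular $Q$. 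Gronwall's inequality (in its version for $L^1$ integrators) then yields $\phi_n(t) \le C_1(1+\sup_{y\in K}\|y\|_H^p)\exp(C_2\int_0^T(1+g(s))ds)$ for all $t \le T$, uniformly over $n \in \mathbb N$, $x \in K$ and $Q \in \cC^n(x)$. Taking $t = T$ gives \eqref{eq: moment estimate lemma i}. The only point requiring care — and the one I would flag as the main obstacle — is keeping all constants in the factorization inequality independent of $n$ and of the control $\xi^k$: this is fine because the factorization estimate depends on $\sigma$ only through the pathwise bound \eqref{eq: LG2}, which is uniform in $(f,t,\omega,\mu)$, and because averaging over $k$ interacts cleanly with the linear structure of the Gronwall argument.
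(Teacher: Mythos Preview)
Your approach is correct and essentially the same as the paper's: decompose the mild solution, use the factorization estimate for the stochastic convolution (the paper cites \cite[Lemma~4.2]{C23}, which is exactly the Da~Prato--Kwapie\'n--Zabczyk bound under \eqref{eq: DZ cond} you invoke), localize with a stopping time, and run Gronwall on the averaged quantity $\phi_n(t)=\tfrac1n\sum_k E^Q[\|X^k\|_{t}^p]$. Your digression about a $\j(s)^p$ weight is a red herring---once the factorization exponents are carried honestly (as you say you would do), the $\j$-integral \eqref{eq: DZ cond} is absorbed into the constant $C$ and the Gronwall integrand is simply $\phi_n(s)\,ds$, which is precisely how the paper writes it.
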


\begin{proof}
	We start with \eqref{eq: moment estimate lemma i}.
	Take \(n \in \mathbb{N}, \nu \in K\) and \(Q \in \cC^n (\nu)\). For \(\scl > 0\), define
	\[
	T_\scl := \inf \Big\{ t \in [0, T] \colon \frac{1}{n} \sum_{k = 1}^n \|X^k\|_t^p \geq \scl \,\Big\}
	\]
	with the usual convention \(\inf \emptyset := \infty\).
	Thanks to Condition \ref{cond: main1} (ii), for all \(f \in F, t \in [0, T], \omega = (\omega^1, \dots, \omega^n)\) and \(1 \leq k \leq n\), we obtain that
	\begin{align*}
		\|b (f, t, \omega^k, \X_n (\omega (\cdot \wedge t)))\|_H^p & \leq C \Big[ 1 + \|\omega^k\|_t^p + \|\X_n (\omega (\cdot \wedge t))\|^p_p \Big] 
		\\&= C \Big[1 + \|\omega^k\|_t^p + \frac{1}{n} \sum_{i = 1}^n \|\omega^i\|_t^p \Big],
	\end{align*}
	where the constant depends on \(p\) and the linear growth constant from Condition~\ref{cond: main1}~(ii). Similarly, we get that 
	\begin{align*}
		\|S_s \sigma (f, t, \omega^k, \X_n (\omega (\cdot \wedge t)))\|_{L_2 (U, H)} \leq \j (s) \Big[ 1+  \|\omega^k\|_t + \Big(\frac{1}{n} \sum_{i = 1}^n \|\omega^i\|^p_t \Big)^{1/p} \, \Big].
	\end{align*}
	Using these bounds and \cite[Lemma 4.2]{C23} (this lemma requires the integrability condition~\eqref{eq: DZ cond}), there exists a constant \(C > 0\) that only depends on \(\j, \alpha, p, T, K\), the constant from Condition~\ref{cond: main1} (ii), and  \(\sup_{s \in [0, T]} \| S_s\|_{L(H)}\),\footnote{\(\sup_{s \in [0, T]} \| S_s\|_{L(H)}\) is finite by \cite[Proposition~I.5.5]{EN}} such that 
	\begin{align*}
		\frac{1}{n} \sum_{k = 1}^n E^Q \big[ \|X^k\|_{t \wedge T_\scl}^p \big] &\leq \frac{C}{n} \sum_{k = 1}^n \Big( 1 + E^Q \Big[ \int_0^{t \wedge T_\scl} \|b ( \xi^k_s, s, X^k, \X^n (X_{\cdot \wedge s})) \|^p_H \, ds \Big] 
		\\&\hspace{1cm}+ E^Q\Big[ \sup_{s \in [0, t \wedge T_\scl]} \Big\| \int_0^s S_{t - r} \sigma (\xi^k_r, r, X^k, \X^n (X_{\cdot \wedge r})) \, d W^k_r \Big\|_H^p \Big] \Big)
		\\&\leq \frac{C}{n} \sum_{k = 1}^n \Big( 1 + E^Q \Big[ \int_0^{t} \Big( \|X^k\|^p_{s \wedge T_\scl} + \frac{1}{n} \sum_{i = 1}^n \|X^i\|^p_{s \wedge T_\scl} \Big) \, ds \Big] \Big)
		\\&= C \Big( 1 + \int_0^t \frac{1}{n} \sum_{k = 1}^n E^Q \big[  \|X^k\|^p_{s \wedge T_\scl} \big] ds \Big).
	\end{align*}
	Thanks to Gronwall's and Fatou's lemma (using the fact that \(T_{\scl} \to \infty\) as \(\scl \to \infty\)), it follows that 
	\[
	\frac{1}{n} \sum_{k = 1}^n E^Q \big[ \|X^k\|_{T}^p \big] \leq C.
	\]
	By the dependencies of the constant \(C\), we conclude that \eqref{eq: moment estimate lemma i} holds.
	
	Next, we explain \eqref{eq: moment estimate lemma ii}.
	Recall that \(Q \circ X^{-1} \in \W^p (\Omega)\) for all \(Q \in \cC^0 (\nu)\) by definition, and notice that
	\[
	\|Q^X_t\|^p_p = \int \|\omega\|_T^p \, Q^X_t (d\omega) = E^Q \big[ \|X\|^p_t \big].
	\]
	With this observation at hand, \eqref{eq: moment estimate lemma ii} follows from Gronwall's lemma along the same lines as \eqref{eq: moment estimate lemma i} above. We omit the details for brevity. 
\end{proof}

\subsection{Martingale Problem Characterizations of \(\cC^0\) and \(\cC^n\)} \label{sec: rel con}
  
	In the following, we provide martingale problem characterizations for the sets \(\cC^0 (\nu)\) and \(\cC^n (\nu)\). 
	 Let \(\mathcal{D} (A^*)\) be a countable subset of \(D (A^*)\) that is dense in the graph norm on \(D(A^*)\). Such a set exists as \(A\) generates a strongly continuous semigroup, see \cite[Lemma 7.3]{C23} for details. Further, let \(\mathcal{C}^2_c\) be a countable subset of \(C^2_c(\bR; \bR)\) that is dense for the norm \(\|f\|_\infty + \|f'\|_\infty + \|f''\|_\infty\). Finally, for \(s \in [0, T]\), let \(\mathcal{T}_s \subset C_b (\Theta; \bR)\) be a countable separating class for \(\mathcal{O}_s\). The existence of such a class follows as in the proof of \cite[Lemma A.1]{LakSIAM17}.
	\begin{lemma} \label{lem: mg chara}
		Suppose that Condition \ref{cond: main1} holds. Let \(\nu \in \W^p (H)\) and \(Q\in \fP(\Theta)\). The following are equivalent:
		\begin{enumerate}
			\item[\textup{(i)}]
			\(Q \in \cC^0 (\nu)\).
			\item[\textup{(ii)}] The following properties hold:
			\begin{enumerate}
			\item[\textup{(a)}]
			\(Q \circ X^{-1}_0 = \nu\);
			\item[\textup{(b)}] \(Q \circ X^{-1} \in \W^p(\Omega)\);
			\item[\textup{(c)}] 
			for all \(y^* \in D(A^*)\) and \(g \in C^2_c (\bR; \bR)\), the process 
			\[
			\mathsf{M}^{g, y^*} := g (\langle X, y^*\rangle_H) - \int_0^\cdot \int \mathcal{L}_{g, y^*} (f, s, X, Q^X_s) \, M (ds, df)
			\]
			is a (square integrable) \(Q\)-\(\mathbf{O}\)-martingale.
			\end{enumerate}
				\item[\textup{(iii)}] The following properties hold:
			\begin{enumerate}
				\item[\textup{(a)}]
				\(Q \circ X^{-1}_0 = \nu\);
				\item[\textup{(b)}] \(Q \circ X^{-1} \in \W^p(\Omega)\);
				\item[\textup{(c)}] 
				for all \(y^* \in \mathcal{D} (A^*), g \in \mathcal{C}^2_c, s, t \in \mathbb{Q}_+ \cap [0, T], s < t\) and all \(\mathfrak{t} \in \mathcal{T}_s\), 
				\[
				E^Q \big[ (\mathsf{M}^{g, y^*}_t - \mathsf{M}^{g, y^*}_s) \mathfrak{t} \big] = 0.
				\]
		\end{enumerate}
		\end{enumerate}
	\end{lemma}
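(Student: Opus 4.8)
The goal is to establish the equivalence of three characterizations of $\cC^0(x)$: the mild-solution formulation (Definition~\ref{def: C MK}), the martingale-problem formulation with test functions $g(\langle X, y^*\rangle_H)$, and its countable version over a dense/separating family. The plan is to prove $\textup{(i)} \Rightarrow \textup{(ii)} \Rightarrow \textup{(iii)} \Rightarrow \textup{(i)}$, with the last implication being the substantive analytic content.

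\textbf{Proof sketch.}
\emph{Step 1: $\textup{(i)} \Rightarrow \textup{(ii)}$.} Suppose $Q \in \cC^0(x)$, so there is a predictable control $\xi$ and, on an extension, a cylindrical Brownian motion $W$ with $X$ a mild solution to the controlled SPDE. Property (ii)(a) is immediate from evaluating the mild equation at $t = 0$ (since $S_0 = \on{Id}$), and (ii)(b) is part of the definition of $\cC^0(x)$. For (ii)(c), fix $y^* \in D(A^*)$ and $g \in C^2_c(\bR; \bR)$. One applies the It\^o formula for mild solutions (in the form due to Da Prato--Zabczyk / Gawarecki--Mandrekar; see also the tools used in \cite{bhatt1998interacting,gatarekgoldys}) to $g(\langle X_t, y^*\rangle_H)$. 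The key point is that $t \mapsto \langle X_t, y^*\rangle_H$ is a real-valued It\^o process: indeed, for $y^* \in D(A^*)$ the mild form yields $\langle X_t, y^*\rangle_H = \langle x, y^*\rangle_H + \int_0^t \big( \langle X_s, A^* y^*\rangle_H + \langle b(\xi_s, s, X, Q^X_s), y^*\rangle_H\big) ds + \int_0^t \langle \sigma(\xi_s, s, X, Q^X_s)^* y^*, dW_s\rangle_U$. Applying the classical one-dimensional It\^o formula and then using property (ii) of Definition~\ref{def: C MK} to replace the drift term $\int_0^\cdot \mathcal{L}_{g, y^*}(\xi_s, s, X, Q^X_s)\, ds$ by $\int_0^\cdot\int \mathcal{L}_{g, y^*}(f, s, X, Q^X_s)\, M(ds, df)$ shows that $\mathsf{M}^{g, y^*}$ equals the stochastic integral $\int_0^\cdot g'(\langle X_s, y^*\rangle_H) \langle \sigma^* y^*, dW_s\rangle_U$, which is a square-integrable martingale because $g'$ is bounded and, by Condition~\ref{cond: main1}~(ii) (the bound \eqref{eq: LG2} applied with $s = 0$, together with the moment estimate~\eqref{eq: moment estimate lemma ii} of Lemma~\ref{lem: moment estimates}), the integrand is in $L^2(dt \otimes Q)$. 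Note $\mathbf{O}$-adaptedness of the drift term holds by predictability of $b, \sigma$ and $\xi$.

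\emph{Step 2: $\textup{(ii)} \Rightarrow \textup{(iii)}$.} This is a routine approximation/density argument. Since $\mathsf{M}^{g, y^*}$ is a martingale, the identity $E^Q[(\mathsf{M}^{g, y^*}_t - \mathsf{M}^{g, y^*}_s)\mathfrak{t}] = 0$ holds for \emph{all} $0 \le s < t \le T$, all $\mathcal{O}_s$-measurable bounded $\mathfrak{t}$, all $y^* \in D(A^*)$ and all $g \in C^2_c(\bR; \bR)$; restricting to the countable subfamilies $\mathcal{D}(A^*)$, $\mathcal{C}^2_c$, rationals, and $\mathcal{T}_s$ is an immediate specialization. (Square-integrability is used to make all expectations finite.)

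\emph{Step 3: $\textup{(iii)} \Rightarrow \textup{(i)}$.} This is the heart of the lemma and where I expect the main obstacle. First, promote the countable identities in (iii)(c) to full martingality of $\mathsf{M}^{g, y^*}$ for every $y^* \in D(A^*)$ and every $g \in C^2_c(\bR; \bR)$: one uses density of $\mathcal{D}(A^*)$ in $D(A^*)$ for the graph norm (so $\langle X_\cdot, y^*\rangle_H$, $\langle X_\cdot, A^* y^*\rangle_H$ and $\sigma^* y^*$ converge appropriately, with uniform integrability furnished by the linear-growth bounds and Lemma~\ref{lem: moment estimates}), density of $\mathcal{C}^2_c$ in $C^2_c(\bR;\bR)$ for the $C^2$-norm, right-continuity of paths to pass from rational to all times via the separating class $\mathcal{T}_s$, and a monotone-class argument to pass from $\mathcal{T}_s$ to all bounded $\mathcal{O}_s$-measurable functions. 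Next, from martingality of $\mathsf{M}^{g,y^*}$ for quadratic-type $g$'s (e.g.\ $g(r) = r$ and $g(r) = r^2$ localized, or directly working with $g \in C^2_c$ and polarizing) one identifies, for each fixed $y^* \in D(A^*)$, both the drift and the quadratic variation of the real semimartingale $N^{y^*}_t := \langle X_t, y^*\rangle_H$: the drift is $\int_0^\cdot (\langle X_s, A^* y^*\rangle_H + \langle b(\cdot), y^*\rangle_H)\, ds$ and $\langle N^{y^*}\rangle_t = \int_0^t \|\sigma^*(\cdot) y^*\|_U^2\, ds$. Polarization over $y^*, z^* \in D(A^*)$ then gives the joint bracket $\langle N^{y^*}, N^{z^*}\rangle_t = \int_0^t \langle \sigma^*(\cdot) y^*, \sigma^*(\cdot) z^*\rangle_U\, ds$. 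This is exactly the structure needed to invoke a martingale-representation theorem for Hilbert-space-valued processes: on a (possibly enlarged) stochastic basis there exists a cylindrical Brownian motion $W$ on $U$ such that the martingale part of $N^{y^*}$ is $\int_0^\cdot \langle \sigma^*(\xi_s, s, X, Q^X_s) y^*, dW_s\rangle_U$ for all $y^* \in D(A^*)$ simultaneously (here one uses that $D(A^*)$ is dense in $H$ and the growth/integrability of $\sigma$; cf.\ the representation results underlying \cite{DaPratoEd1,gatarekgoldys} and the analogous step in \cite{bhatt1998interacting,C23}). Finally, one verifies that the weak-mild (analytically weak) equation $\langle X_t, y^*\rangle_H = \langle x, y^*\rangle_H + \int_0^t (\langle X_s, A^* y^*\rangle_H + \langle b, y^*\rangle_H)\, ds + \int_0^t \langle \sigma^* y^*, dW_s\rangle_U$, valid for all $y^* \in D(A^*)$, is equivalent to the mild form $X_t = S_t x + \int_0^t S_{t-s} b\, ds + \int_0^t S_{t-s}\sigma\, dW_s$ --- this is the standard equivalence of weak and mild solutions for stochastic evolution equations (e.g.\ \cite[Theorem~5.4]{DaPratoEd1} or \cite{gatarekgoldys}), whose applicability is guaranteed by the integrability condition \eqref{eq: DZ cond} on $\j$ together with \eqref{eq: LG2} and Lemma~\ref{lem: moment estimates}. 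Property (i) of Definition~\ref{def: C MK} (that $Q \circ X^{-1} \in \W^p(\Omega)$) is (iii)(b), and property (ii) of that definition (the consistency of the two drift representations for $\mathsf{M}^{g, y^*}$) follows by comparing the It\^o decomposition just obtained against the definition of $\mathsf{M}^{g, y^*}$, using that two continuous finite-variation processes agreeing as the drift of the same semimartingale must coincide. The control $\xi$ is extracted from $M$ via a measurable selection: since the projection of $M(ds, df)$ on $[0, T]$ is Lebesgue measure, one may disintegrate $M = ds\, \kappa_s(df)$ with $\kappa_s$ a probability kernel into $F$, and Condition~\ref{cond: main1}~(iii) (convexity of $\{(b, \sigma\sigma^*)(f, \cdot) : f \in F\}$) together with Filippov's implicit function theorem (as announced in the introduction) yields a predictable $F$-valued process $\xi$ with $(b, \sigma\sigma^*)(\xi_s, s, X, Q^X_s) = \int (b, \sigma\sigma^*)(f, s, X, Q^X_s)\, \kappa_s(df)$ for $(ds\otimes Q)$-a.e.\ $(s, \omega)$; this is precisely what is needed to match the relaxed drift/diffusion in $\mathsf{M}^{g,y^*}$ to the strict ones in Definition~\ref{def: C MK}(iii).

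\textbf{Main obstacle.} The delicate point is Step~3: correctly setting up the Hilbert-space martingale representation to produce a \emph{single} cylindrical Brownian motion $W$ driving $\langle X_\cdot, y^*\rangle_H$ for all $y^* \in D(A^*)$ at once (rather than one Brownian motion per $y^*$), handling the possibly degenerate diffusion coefficient $\sigma^*(\cdot) y^*$ via an extension of the probability space, and then cleanly invoking the weak--mild equivalence under the non-Hilbert--Schmidt bound \eqref{eq: LG2} on $\sigma$ (only $S\sigma$ is Hilbert--Schmidt). The measurable-selection/Filippov step, while conceptually the bridge to the feedback-control formulation of $\cC^0$, is technically standard once the relaxed decomposition is in hand.
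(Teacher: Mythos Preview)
Your proposal is essentially correct and follows the same strategy as the paper: It\^o's formula on the analytically weak form for \textup{(i)}$\Rightarrow$\textup{(ii)}, a density/separating-class argument to bridge \textup{(ii)} and \textup{(iii)}, and the combination of disintegration of $M$, convexity plus Filippov's implicit function theorem, a cylindrical martingale representation theorem (the paper cites Ondrej\'at \cite{OndRep}), and the weak--mild equivalence (\cite{MR2067962}) for the remaining implication. Two minor organizational differences worth noting: the paper closes the cycle via \textup{(ii)}$\Rightarrow$\textup{(i)} and \textup{(iii)}$\Rightarrow$\textup{(ii)} separately (rather than your single \textup{(iii)}$\Rightarrow$\textup{(i)}), and it invokes Filippov \emph{before} the representation step---applied to the countable family $(\mathcal{L}_{g,y^*})_{g\in\mathcal{C}^2_c,\,y^*\in\mathcal{D}(A^*)}$ with the product topology rather than to $(b,\sigma\sigma^*)\in H\times L(H)$---so that the martingale problem is already in strict (non-relaxed) form when the cylindrical Brownian motion is constructed; this streamlines your Step~3 and avoids the notational ambiguity of having $\xi$ appear in the representation formula before it is produced.
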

\begin{proof}
	We will prove the following implications:
	\begin{align*}
		\textup{(i)} \Rightarrow \textup{(ii)}, \quad \textup{(ii)} \Rightarrow \textup{(i)}, \quad \textup{(iii)} \Rightarrow \textup{(ii)}.
	\end{align*}
As (ii) \(\Rightarrow\) (iii) is trivial, these implications complete the proof. 
	
{\em \(\textup{(i)} \Rightarrow \textup{(ii)}\):} 
In the following we work on a standard extension of \((\Theta, \mathcal{O}, \mathbf{O}, Q)\). Further, we use the notation from Definition \ref{def: C MK} (ii).
Let us pass to the analytically weak formulation of the controlled SPDE from Definition \ref{def: C MK}. Namely, by \cite[Theorem 13]{MR2067962}, for every \(y^* \in D(A^*)\), a.s.
			\begin{align*}
					\langle X, y^*\rangle_H = \langle X_0, y^*\rangle_H &+ \int_0^\cdot \big(\langle X_s, A^* y^* \rangle_H  + \langle b (\xi_s, s, X, Q^X_s), y^* \rangle_H \big) \, ds 
					\\&+ \int_0^\cdot \langle \sigma^* (\xi_s, s, X, Q^X_s) y^*, d W_s \rangle_U.
				\end{align*}
Thus, It\^o's formula yields that a.s.
\begin{align*}
	g (\langle X, y^*\rangle_H) &- \int_0^\cdot \mathcal{L}_{g, y^*} (\xi_s, s, X, Q^X_s) \, ds 
	\\&= g (\langle X_0, y^* \rangle_H) + \int_0^\cdot g' ( \langle X_s, y^*\rangle_H ) \langle \sigma^* (\xi_s, s, X, Q^X_s) y^*, d W_s \rangle_U.
\end{align*}
Let \([\,\cdot\, ]\) be the quadratic variation process. Then, we obtain that a.s.
\begin{align*}
\Big[ g (\langle X, y^*\rangle_H) &- \int_0^\cdot \mathcal{L}_{g, y^*} (\xi_s, s, X, Q^X_s) \, ds \Big]_T 
\\&= \int_0^T \big( g' (\langle X_s, y^*\rangle_H) \big)^2 \|\sigma^* (\xi_s, s, X, Q^X_s) y^*\|^2_U \, ds.
\end{align*}
Thanks to the linear growth conditions from Condition \ref{cond: main1} (ii), it follows that 
\begin{align*}
	E^Q \Big[ \Big[ g (\langle X, y^*\rangle_H) - \int_0^\cdot \mathcal{L}_{g, y^*} (\xi_s, s, X, Q^X_s) \, ds \Big]_T \Big] < \infty.
\end{align*}
Hence, the process
\[
g (\langle X, y^*\rangle_H) - \int_0^\cdot \mathcal{L}_{g, y^*} (\xi_s, s, X, Q^X_s) \, ds
\]
is a (square integrable) \(Q\)-\(\mathbf{O}\)-martingale. By Definition \ref{def: C MK} (ii), this process coincides \(Q\)-a.s. with \(\mathsf{M}^{g, y^*}\). Consequently, (ii) follows. 

{\em \(\textup{(ii)} \Rightarrow \textup{(i)}\):} 
	It is known (see, e.g., \cite[Lemma 3.2]{LakSPA15}) that there exists a \(\mathbf{O}\)-predictable probability kernel \(\v\) from \([0, T] \times \Theta\) into \(F\) such that 
	\[
	M (dt, df) = \v (t, M, df) \, dt.
	\]
	As \(Q \circ X^{-1} \in \W^p(\Omega)\), the map \(t \mapsto Q^X_t\) is continuous from \([0, T]\) into \(\W^p (\Omega)\).
	Hence, with Condition~\ref{cond: main1}~(i), we get that the map 
	\[
	(f, t, \omega) \mapsto \mathfrak{L} (f, t, \omega) := \big( \mathcal{L}_{g, y^*} (f, t, \omega, Q^X_t) \big)_{g\hspace{0.02cm} \in\hspace{0.02cm} \mathcal{C}^2_c,\, y^*\hspace{0.02cm} \in\hspace{0.02cm} \mathcal{D} (A^*)}
	\]
	is continuous, where the image space is endowed with the product topology. Notice also that \((t, \omega, m) \mapsto \mathfrak{L} (f, t, \omega)\) is \(\mathbf{O}\)-predictable for every \(f \in F\) (cf. \cite[Theorem IV.97]{dellacheriemeyer}).
	Further, by (iii) from Condition~\ref{cond: main1}, the set 
	\[
	\Lambda(t, \omega) := \big\{ \mathfrak{L} (f, t, \omega) \colon f \in F \big\}
	\]
	is convex. Hence, \cite[Theorems II.4.3, II.6.2]{sion} yield that, for all \((t, \omega, m) \in [0, T] \times \Theta\),
	\[
\pi (t, \omega, m) := \int \mathfrak{L}(f, t, \omega) \, \v (t, m, df) \in \Lambda(t, \omega).
	\]
	Notice that \(\pi\) is \(\mathbf{O}\)-predictable.
	We deduce from Filippov's implicit function theorem (\cite[Theorem 18.17]{charalambos2013infinite}) that there exists an \(F\)-valued \(\mathbf{O}\)-predictable process \(\xi\) such that \(\pi (t, \omega, m) = \mathfrak{L} (\xi_t (\omega, m), t, \omega)\) for all \((t, \omega, m) \in [0, T] \times \Theta\). In particular, by a density argument, \(\xi\) is as in Definition \ref{def: C MK} (ii), i.e., we can replace \(\mathcal{C}^2_c\) by \(C^2_c(\bR; \bR)\) and \(\mathcal{D} (A^*)\) by \(D (A^*)\).
	In summary, for all \(g \in C^2_c (\bR; \bR)\) and \(y^* \in D (A^*)\), the processes 
	    \[
	g (\langle X, y^*\rangle_H) - \int_0^\cdot \mathcal{L}_{g, y^*} (\xi_s, s, X, Q^X_s) \, ds
	\]
	are \(Q\)-\(\mathbf{O}\)-martingales.
	Now, we may conclude that \(Q \in \cC^0 (\nu)\) from a standard representation theorem for cylindrical local martingales (\cite[Theorem~3.1]{OndRep}) and the equivalence of the analytical weak and mild formulation (\cite[Theorem~13]{MR2067962}), see Step 5 of the proof of \cite[Theorem 2.5]{C23} for details. 
	
	{\em \(\textup{(iii)} \Rightarrow \textup{(ii)}\):} This implication follows readily by a density argument. We omit the details for brevity.
\end{proof}

A similar result can also be proved for the set \(\cC^n (\nu)\). 
	\begin{lemma} \label{lem: mg chara N}
	Suppose that Condition \ref{cond: main1} holds. Let \(n \in \mathbb{N}, \nu \in \W^p (H)\) and \(Q\in\fP(\Theta^n)\). The following are equivalent:
	\begin{enumerate}
		\item[\textup{(i)}]
		\(Q \in \cC^n (\nu)\).
		\item[\textup{(ii)}] The following hold:
		\begin{enumerate}
			\item[\textup{(a)}]
			\(Q \circ (X^1_0, \dots, X^n_0)^{-1} = \bigoplus_1^n \nu\);
			\item[\textup{(b)}]
			for all \(y^1, \dots, y^n \in D (A^*)\) and \(g \in C^2_c (\bR; \bR)\), the process 
			\[
			g \Big( \sum_{k = 1}^n\, \langle X^k, y^k\rangle_H\Big) - \sum_{k = 1}^n \int_0^\cdot \int \mathcal{L}^k_{g, y^1, \dots, y^n} (f, s, X, \X_n (X_{\cdot \wedge s})) \, M^k (ds, df)
			\]
			is a (square integrable) \(Q\)-\(\mathbf{O}\)-martingale.
		\end{enumerate}
	\end{enumerate}
\end{lemma}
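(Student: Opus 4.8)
The plan is to mirror the proof of Lemma~\ref{lem: mg chara}, establishing the chain of implications $\textup{(i)} \Rightarrow \textup{(ii)}$ and $\textup{(ii)} \Rightarrow \textup{(i)}$, since the only real difference is bookkeeping over the $n$ coordinates and the common interaction term $\X_n(X_{\cdot \wedge s})$. For $\textup{(i)} \Rightarrow \textup{(ii)}$, I would start from Definition~\ref{def: Cn}: working on a standard extension carrying the independent cylindrical Brownian motions $W^1, \dots, W^n$, I pass to the analytically weak formulation of each mild equation via \cite[Theorem~13]{MR2067962}, obtaining for every $y^k \in D(A^*)$ that $\langle X^k, y^k\rangle_H$ is a continuous semimartingale with drift $\langle X^k_s, A^* y^k\rangle_H + \langle b(\xi^k_s, s, X^k, \X_n(X_{\cdot \wedge s})), y^k\rangle_H$ and martingale part $\int_0^\cdot \langle \sigma^*(\xi^k_s, s, X^k, \X_n(X_{\cdot \wedge s})) y^k, dW^k_s\rangle_U$. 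Then It\^o's formula applied to the smooth function $g\big(\sum_{k=1}^n \langle \cdot^k, y^k\rangle_H\big)$ produces exactly $\sum_{k=1}^n \mathcal{L}^k_{g, y^1, \dots, y^n}(\xi^k_s, s, X, \X_n(X_{\cdot \wedge s})) ds$ as the finite-variation part --- here the independence of the $W^k$ kills the cross terms in the quadratic covariation, which is why only the diagonal sum of the generator pieces appears. The linear growth bounds from Condition~\ref{cond: main1}~(ii) together with the moment estimate~\eqref{eq: moment estimate lemma i} from Lemma~\ref{lem: moment estimates} give square integrability of the quadratic variation, so the remaining stochastic integral is a genuine (square integrable) $Q$-$\mathbf{O}$-martingale, and Definition~\ref{def: Cn}~(i) identifies it $Q$-a.s.\ with the process in (ii)(b).

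For $\textup{(ii)} \Rightarrow \textup{(i)}$, I would follow Step by step the disintegration-plus-selection argument: by \cite[Lemma~3.2]{LakSPA15} write $M^k(dt, df) = \v^k(t, M^k, df)\, dt$ for $\mathbf{O}^n$-predictable kernels $\v^k$; by Condition~\ref{cond: main1}~(i) and continuity of $t \mapsto \X_n(X_{\cdot \wedge t})$ in $\W^p(\Omega)$ (a path property, not requiring a moment hypothesis on $Q^X$ this time) the maps $(f, t, \omega) \mapsto \mathcal{L}^k_{g, y^1, \dots, y^n}(f, t, \omega, \X_n(\omega_{\cdot \wedge t}))$ are continuous; by Condition~\ref{cond: main1}~(iii) the relevant image sets $\Lambda^k$ are convex, so \cite[Theorems~II.4.3, II.6.2]{sion} place the averaged vector $\pi^k := \int \mathfrak{L}^k(f, \cdot)\, \v^k(\cdot, df)$ inside $\Lambda^k$; then Filippov's implicit function theorem \cite[Theorem~18.17]{charalambos2013infinite} yields $F$-valued $\mathbf{O}^n$-predictable processes $\xi^1, \dots, \xi^n$ realizing these values. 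After a density argument over $\mathcal{C}^2_c$ and $\mathcal{D}(A^*)$, one concludes that $g\big(\sum_k \langle X^k, y^k\rangle_H\big) - \sum_k \int_0^\cdot \mathcal{L}^k_{g, y^1, \dots, y^n}(\xi^k_s, s, X, \X_n(X_{\cdot \wedge s}))\, ds$ is a $Q$-$\mathbf{O}$-martingale; specializing to $g(z) = z$ (and suitable truncations), and varying $y^k$, recovers the analytically weak form of each coordinate SPDE, and the cylindrical martingale representation theorem \cite[Theorem~3.1]{OndRep} together with the weak–mild equivalence \cite[Theorem~13]{MR2067962} reconstructs the independent Brownian motions $W^1, \dots, W^n$ and the mild equations, i.e.\ $Q \in \cC^n(x)$ as in Definition~\ref{def: Cn}. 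As in Lemma~\ref{lem: mg chara} one may also append an equivalent form (iii) with a countable separating class, shown by a routine density argument.

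The main obstacle I expect is the vector-valued selection step: one needs a single predictable selector $\xi^k$ that simultaneously matches \emph{all} the coordinates of $\mathfrak{L}^k$ indexed by $(g, y^1, \dots, y^n)$, and the argument only works because the drift and the squared-volatility term depend on $f$ through the common pair $(b, \sigma\sigma^*)$ whose graph is convex by Condition~\ref{cond: main1}~(iii); care is required to check that the $n$-particle generator $\mathcal{L}^k$ indeed factors through that pair (the $g'$ and $g''$ prefactors are functions of the coordinate process only, hence harmless), so that Sion's theorem and Filippov's theorem apply coordinatewise yet compatibly. A secondary technical point is handling the unbounded operator $A^*$ inside $\mathcal{L}^k$ when passing to $g(z) = z$: this is dealt with exactly as in \cite[Theorem~2.5]{C23} by a truncation/localization argument, noting that $y^k \in D(A^*)$ keeps $\langle X^k_s, A^* y^k\rangle_H$ well-defined. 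Beyond these, the proof is a direct adaptation of Lemma~\ref{lem: mg chara}, so I would write it compactly, referring back to that proof for the parts that transfer verbatim.
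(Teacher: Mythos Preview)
Your proposal is correct and follows exactly the route the paper intends: the paper's own proof simply states that the lemma follows similarly to the proof of (i) $\Leftrightarrow$ (ii) in Lemma~\ref{lem: mg chara} and omits the details. Your sketch fills in precisely those details---the passage to the analytically weak formulation plus It\^o's formula for $\textup{(i)} \Rightarrow \textup{(ii)}$, and the kernel disintegration, convexity, and Filippov selection for $\textup{(ii)} \Rightarrow \textup{(i)}$---and you correctly flag the two genuine differences from the single-particle case (independence of the $W^k$ killing cross terms, and the fact that $t \mapsto \X_n(X_{\cdot \wedge t})$ is continuous in $\W^p(\Omega)$ as a pathwise property without any moment hypothesis).
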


\begin{proof} The lemma follows similar to the proof of (i) \(\Leftrightarrow\) (ii) from Lemma \ref{lem: mg chara}. We omit the details for brevity. \end{proof}

We also relate the sets \(\cC^0 (\nu)\) and \(\cC^n(\nu)\) to \(\cA^0 (\nu)\) and \(\cA^n (\nu)\), respectively.

\begin{lemma} \label{lem: nonlinear SPDE rel RCR}
   Suppose that Condition~\ref{cond: main1} holds and take \(\nu \in \W^p (H)\). The following two equalities hold:
   \begin{enumerate}
          \item[\textup{(i)}] 
   \(\cA^0(\nu) = \{ Q \circ X^{-1} \colon Q \in \cC^0 (\nu)\}\).    
       \item[\textup{(ii)}]
   \(\cA^n (\nu) = \{ Q \circ (X^1, \dots, X^n)^{-1} \colon Q \in \cC^n (\nu)\}\).    
   \end{enumerate}
\end{lemma}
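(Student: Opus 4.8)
The plan is to prove both equalities by showing two inclusions each, where the heart of the matter is the passage from a relaxed control rule $Q \in \cC^0(x)$ (resp. $\cC^n(x)$) to a feedback control, which is precisely the content already established inside the proof of Lemma~\ref{lem: mg chara}.

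\medskip
\noindent\textbf{Part (i), the inclusion $\{Q \circ X^{-1} \colon Q \in \cC^0(x)\} \subseteq \cA^0(x)$.} Fix $Q \in \cC^0(x)$. By Definition~\ref{def: C MK}~(ii)--(iii) there is an $F$-valued $\mathbf{O}$-predictable process $\xi$ and (on an extension) a cylindrical Brownian motion $W$ such that the mild equation holds $Q$-a.s.\ with coefficient arguments $(\xi_s, s, X, Q^X_s)$. Writing $P := Q \circ X^{-1} \in \W^p(\Omega)$, the crucial point is that $P^X_s = Q^X_s$, so the law $P$ already ``knows'' the distributional input; what remains is to realize $\xi$ as a feedback control, i.e.\ an $\mathbf{F}$-predictable process of the form $\f_s(X)$. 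This is exactly where I would invoke the argument from the implication $\textup{(ii)} \Rightarrow \textup{(i)}$ in the proof of Lemma~\ref{lem: mg chara}: disintegrate $M(dt,df) = \v(t,M,df)\,dt$, use the convexity from Condition~\ref{cond: main1}~(iii) together with \cite[Theorems II.4.3, II.6.2]{sion} to see that the averaged generator value lies in $\Lambda(t,\omega) = \{\mathfrak{L}(f,t,\omega)\colon f\in F\}$, and then apply Filippov's implicit function theorem (\cite[Theorem 18.17]{charalambos2013infinite}) to extract an $\mathbf{F}$-predictable (rather than $\mathbf{O}$-predictable) selector $\f$ with $\mathfrak{L}(\f_t(\omega),t,\omega) = \pi(t,\omega,M)$. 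Since $\f$ depends only on $X$, it is a genuine feedback control, and the martingale characterization (Lemma~\ref{lem: mg chara}~(ii)$\Leftrightarrow$(i) applied on $\Omega$, or directly the representation theorem \cite[Theorem 3.1]{OndRep} plus \cite[Theorem 13]{MR2067962}) yields that $P$ solves the McKean--Vlasov SPDE of Definition~\ref{def: cA0}, i.e.\ $P \in \cA^0(x)$.

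\medskip
\noindent\textbf{Part (i), the inclusion $\cA^0(x) \subseteq \{Q \circ X^{-1} \colon Q \in \cC^0(x)\}$.} This direction is the easy one. Given $P \in \cA^0(x)$ with feedback control $\f$ and Brownian motion $W$, define the deterministic control measure pathwise by $m^\omega(dt,df) := \delta_{\f_t(\omega)}(df)\,dt$ and set $Q := P \circ (X, m^X)^{-1} \in \fP(\Theta)$, i.e.\ $Q$ is the pushforward of $P$ under $\omega \mapsto (\omega, m^\omega)$. One checks that $Q^X_s = P^X_s$, that property (i) of Definition~\ref{def: C MK} holds trivially, and that with $\xi_t(\omega,m) := \f_t(\omega)$ the integral identity in (ii) is an equality of both sides (since $M$ is the Dirac mass at $\f$), while (iii) is just the original mild equation for $P$. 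Hence $Q \in \cC^0(x)$ and $Q\circ X^{-1} = P$. Part~(ii) is proved by the identical two-step argument, now using Lemma~\ref{lem: mg chara N} in place of Lemma~\ref{lem: mg chara}, applying the Filippov/Sion selection coordinate-wise to produce feedback controls $\f^1,\dots,\f^n$, and noting that the empirical measure $\X_n(X_{\cdot\wedge s})$ plays the role that $Q^X_s$ played above but now is a pathwise functional of $X$, so no distributional bookkeeping is needed.

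\medskip
\noindent\textbf{Main obstacle.} The only genuinely nontrivial step is the measurable-selection argument converting a relaxed/randomized control into an $\mathbf{F}$-predictable feedback control while preserving the generator action $Q$-a.s.; this rests on Condition~\ref{cond: main1}~(iii) (convexity of the image set) and is essentially already carried out in Lemma~\ref{lem: mg chara}, so here I would simply cite that argument rather than repeat it. A minor technical point to handle with care is that the selector produced via Filippov depends on $M$ a priori, so one must observe that on the support of $Q$ (where $M = m^X$ is a measurable function of $X$, by the disintegration $\v$) the selector can be taken $\mathbf{F}$-predictable; alternatively, one reparametrizes so that the predictable kernel $\v$ and hence the averaged value $\pi$ depend only on $(t,\omega)$ through $X$, which is automatic here since the mean-field input $Q^X_s$ (resp.\ $\X_n(X_{\cdot\wedge s})$) and the coefficients involve $M$ only through the $dt$-integrated control. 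I expect the write-up to be short, deferring the selection machinery to the earlier lemmas.
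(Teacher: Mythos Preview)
Your proposal contains a genuine gap in the hard inclusion of part~(i), namely the passage from an $\mathbf{O}$-predictable control $\xi$ to an $\mathbf{F}$-predictable feedback $\f$.

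You correctly identify that the selector coming out of the Filippov argument in Lemma~\ref{lem: mg chara} depends a priori on $M$, but both of your proposed fixes fail. The claim that ``on the support of $Q$, $M = m^X$ is a measurable function of $X$'' is simply false in general: a measure $Q \in \cC^0(x)$ lives on $\Theta = \Omega \times \m$, and nothing in Definition~\ref{def: C MK} forces the second marginal to be $\sigma(X)$-measurable. The disintegration $M(dt,df) = \v(t,M,df)\,dt$ expresses $M$ tautologically in terms of itself, not in terms of $X$. Your alternative ``reparametrization'' is equally problematic: the averaged value $\pi(t,\omega,m) = \int \mathfrak{L}(f,t,\omega)\,\v(t,m,df)$ genuinely depends on $m$ through the kernel $\v$, so there is no way to make it $\mathbf{F}$-predictable by inspection.

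The paper resolves this differently. Starting from the $Q$-$\mathbf{O}$-martingale property of $\mathsf{M}^{g,y^*}$, it projects the martingale onto the smaller filtration $X^{-1}(\mathbf{F})$ using \cite[Theorem~9.19, Proposition~9.24]{jacod79}: the process
\[
g(\langle X, y^*\rangle_H) - \int_0^\cdot E^Q\big[\mathcal{L}_{g,y^*}(\xi_s,s,X,Q^X_s)\,\big|\,X^{-1}(\cF_{s-})\big]\,ds
\]
is then a $Q$-$X^{-1}(\mathbf{F})$-martingale. Now Filippov's theorem is applied with the \emph{conditional} kernel $(t,\omega)\mapsto Q(\xi_t \in df \mid X^{-1}(\cF_{t-}))(\omega)$ in place of $\v(t,m,df)$; this kernel is $X^{-1}(\mathbf{F})$-predictable by construction, and the resulting selector $\f$ is automatically of the form $\f \circ X$. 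A change-of-probability-space result \cite[Theorem~10.37]{jacod79} then transfers the martingale property to $P = Q \circ X^{-1}$ on $(\Omega,\cF,\mathbf{F})$, after which the representation argument you cite goes through. The projection step is the missing idea in your proposal; without it, there is no mechanism to eliminate the $M$-dependence.
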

\begin{proof}
    (i). Suppose that \(P \in \cA^0 (\nu)\) and let \(\f\) be as in Definition \ref{def: cA0}. Then, the measure \(P \circ (X, \delta_{\f_t} (df) dt)^{-1}\) is an element of \(\cC^0 (\nu)\). Consequently, we have \[\cA^0(\nu) \subset \{ Q \circ X^{-1} \colon Q \in \cC^0 (\nu)\}.\]
    Conversely, assume that \(P = Q \circ X^{-1}\) for some \(Q \in \cC^0 (\nu)\). Recall the martingale characterization for \(\cC^0(\nu)\) that is given by Lemma~\ref{lem: mg chara}. We deduce from \cite[Theorem~9.19, Proposition~9.24]{jacod79} that, for all \(y^* \in D (A^*)\) and \(g \in C^2_c (\bR; \bR)\),  
    \[
    g (\langle X, y^*\rangle_H) - \int_0^\cdot E^{Q} \big[ \mathcal{L}_{g, y^*} (\xi_s, s, X, Q^X_s) \mid X^{-1}(\cF_{s-}) \big] \, ds
    \]
is a \(Q\)-\(X^{-1} (\mathbf{F})\)-martingale. Using Filippov's implicit function theorem similarly as in the proof for Lemma \ref{lem: mg chara} (with \((t, \omega) \mapsto Q (\xi_t \in df \mid X^{-1}(\cF_{t-} )) (\omega)\) instead of \((t, \omega, m) \mapsto \v (t, m, df)\)), we obtain the existence of a \(X^{-1}(\mathbf{F})\)-predictable process \(\f = \f \circ X\) such that \(Q\)-a.s.
\[
E^{Q} \big[ \mathcal{L}_{g, y^*} (\xi_s, s, X, Q^X_s) \mid X^{-1}(\cF_{s-}) \big] = \mathcal{L}_{g, y^*} (\f_s, s, X, Q^{X}_s), \quad g \in \mathcal{C}^2_c, \ y^* \in \mathcal{D} (A^*).
\]
A density argument shows that this equality holds for all \(g \in C^2_c(\bR; \bR)\) and \(y^* \in D(A^*)\). It follows from \cite[Theorem 10.37]{jacod79}, which is a general result dealing with the change of probability spaces, that 
    \[
    g (\langle X, y^*\rangle_H) - \int_0^\cdot \mathcal{L}_{g, y^*} (\f_s, s, X, Q^X_s) \, ds
    \]
    is a \(P\)-\(\mathbf{F}\)-martingale.
Finally, a standard representation theorem for cylindrical local martingales (\cite[Theorem~3.1]{OndRep}) and the relation of weak and mild solutions (\cite[Theorem~13]{MR2067962}) shows that \(Q \circ X^{-1} \in \cA^0 (\nu)\), see Step 5 of the proof of \cite[Theorem 2.5]{C23} for details. The proof of (i) is complete.

    (ii). This claim follows similar to (i). We omit a detailed proof for brevity.
\end{proof}

The following observation follows directly from Lemmata \ref{lem: mg chara N} and \ref{lem: nonlinear SPDE rel RCR}.

\begin{corollary} \label{coro: conv}
	Suppose that Condition~\ref{cond: main1} holds. For every \(\nu \in \W^p (H)\) and \(n \in \mathbb{N}\), the sets \(\cC^n (\nu)\) and \(\cA^n (\nu)\) are convex. 
\end{corollary}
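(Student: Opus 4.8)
The plan is to read off both assertions directly from the martingale problem characterization in Lemma~\ref{lem: mg chara N} together with the identification in Lemma~\ref{lem: nonlinear SPDE rel RCR}~(ii). For the convexity of \(\cC^n(x)\), I would fix \(n \in \mathbb{N}\), \(x \in H\), two measures \(Q_1, Q_2 \in \cC^n(x)\) and \(\lambda \in [0,1]\), and set \(Q := \lambda Q_1 + (1 - \lambda) Q_2 \in \fP(\Theta^n)\). The point is that the two properties in Lemma~\ref{lem: mg chara N}~(ii) are stable under convex combinations. Property~(a), namely \(Q(X^k_0 = x,\ k = 1, \dots, n) = 1\), is immediate since it holds under both \(Q_1\) and \(Q_2\). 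For property~(b), observe that for fixed \(y^1, \dots, y^n \in D(A^*)\) and \(g \in C^2_c(\bR;\bR)\) the process
\[
N := g\Big( \sum_{k=1}^n \langle X^k, y^k\rangle_H \Big) - \sum_{k=1}^n \int_0^\cdot \int \mathcal{L}^k_{g, y^1,\dots,y^n}(f, s, X, \X_n(X_{\cdot \wedge s}))\, M^k(ds, df)
\]
is one and the same \(\mathbf{O}^n\)-adapted process irrespective of the underlying measure --- crucially, unlike in the definition of \(\cC^0(x)\), the integrand here depends only on the path and \emph{not} on the law \(Q^X_s\). Hence the square-integrable martingale property, which amounts to \(E^Q[N_t^2] < \infty\) and \(E^Q[(N_t - N_s) Z] = 0\) for \(s < t\) and bounded \(\mathcal{O}^n_s\)-measurable \(Z\), is linear (resp.\ convex) in \(Q\): indeed \(E^Q[(N_t - N_s)Z] = \lambda E^{Q_1}[(N_t - N_s)Z] + (1-\lambda) E^{Q_2}[(N_t - N_s)Z] = 0\) and \(E^Q[N_t^2] = \lambda E^{Q_1}[N_t^2] + (1-\lambda) E^{Q_2}[N_t^2] < \infty\). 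By Lemma~\ref{lem: mg chara N} this yields \(Q \in \cC^n(x)\), so \(\cC^n(x)\) is convex.

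For the convexity of \(\cA^n(x)\), I would use Lemma~\ref{lem: nonlinear SPDE rel RCR}~(ii), which states that \(\cA^n(x) = \{ Q \circ (X^1, \dots, X^n)^{-1} \colon Q \in \cC^n(x)\}\). Thus \(\cA^n(x)\) is the image of \(\cC^n(x)\) under the pushforward map \(\Phi \colon \fP(\Theta^n) \to \fP(\Omega^n)\) associated with the coordinate projection \(\Theta^n = (\Omega \times \m)^n \to \Omega^n\). Since \(\Phi(\lambda Q_1 + (1-\lambda) Q_2) = \lambda \Phi(Q_1) + (1-\lambda) \Phi(Q_2)\), the map \(\Phi\) is affine, hence maps the convex set \(\cC^n(x)\) onto a convex subset of \(\fP(\Omega^n)\); therefore \(\cA^n(x)\) is convex.

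There is essentially no serious obstacle here; the only point requiring a moment of care is the observation that the integrands appearing in the martingale characterization of \(\cC^n(x)\) are measure-independent (they involve only \(\X_n(X_{\cdot\wedge s})\), a deterministic functional of the path). This is precisely what distinguishes \(\cC^n(x)\), which is convex, from \(\cC^0(x)\), for which the term \(Q^X_s\) enters the coefficients nonlinearly and for which convexity is consequently neither claimed nor expected. Once Lemmata~\ref{lem: mg chara N} and \ref{lem: nonlinear SPDE rel RCR} are available, the argument reduces to the short verification above.
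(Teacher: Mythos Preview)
Your argument is correct and is precisely what the paper intends: the corollary is stated as an immediate consequence of Lemmata~\ref{lem: mg chara N} and~\ref{lem: nonlinear SPDE rel RCR}, and you have spelled out exactly that---the martingale characterization in Lemma~\ref{lem: mg chara N}~(ii) involves only the path functional \(\X_n(X_{\cdot\wedge s})\) (not the law), so it is linear in \(Q\), and convexity then passes to \(\cA^n(x)\) via the affine pushforward of Lemma~\ref{lem: nonlinear SPDE rel RCR}~(ii).
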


\subsection{Compactness properties}
In this section we investigate (relative) compactness properties of the sets \(\cR^n, \cR^0, \cA^n\) and \(\cA^0\). Let \(\mathsf{r} \colon \m \times \m \to [0, 1]\) be a metric that induces the vague topology on \(\m\) and set 
\[
\mathsf{d} \colon \Theta \times \Theta \to \bR_+, \quad \mathsf{d} ((\omega^1, m^1), (\omega^2, m^2)) := \|\omega^1 - \omega^2\|_T + \mathsf{r} (m^1, m^2).
\]
We define the \(\o\)-Wasserstein space
\[
\W^q (\Theta) := \Big\{\mu \in \fP(\Theta) \colon \int \mathsf{d} (\theta, \theta_0)^\o \, \mu (d \theta) < \infty \Big\},
\]
where \(\theta_0 = (0, m_0) \in \Theta\) is a reference point. Similarly, we define \(\W^\o (\W^\o (\Theta))\), where we use the Wasserstein metric related to \(\mathsf{d}\) for \(\W^\o (\Theta)\). Of course, we endow \(\W^\o (\W^\o(\Theta))\) again with the corresponding \(\o\)-Wasserstein topology. 

\begin{lemma} \label{lem: Rk rel comp}
        Suppose that the Conditions \ref{cond: main1} \textup{(i) -- (ii)} hold. Let \(K \subset \W^p (H)\) be bounded in the sense of Lemma~\ref{lem: moment estimates} and relatively compact in \(\W (H)\). We define the set $\cR (K)$ by
        \[
		\cR (K) := \bigcup_{n \in \mathbb{N}} \bigcup_{\nu \in K} \cR^n (\nu).
		\]
        Then, under either Condition \ref{cond: compact assumption} or \ref{cond: iii alternative}, the set $\cR (K)$ is relatively compact in \(\W^\o (\W^\o (\Theta))\).
        
\end{lemma}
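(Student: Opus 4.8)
The plan is to establish relative compactness in $\W^\o(\W^\o(\Theta))$ via a two-level tightness argument: first show that the family of ``base'' empirical laws is tight on $\W^\o(\Theta)$, upgrade this to relative compactness in the Wasserstein topology using uniform moment bounds, and then lift to the space of measures on measures. Since $\m$ is compact metrizable, the control component contributes nothing to tightness; the real work concerns the path component $\Omega$.

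\textbf{Step 1: Uniform moment bounds.} First I would invoke Lemma~\ref{lem: moment estimates} to get a constant $\Lambda < \infty$ with $\frac{1}{n}\sum_{k=1}^n E^Q[\|X^k\|_T^p] \leq \Lambda$ for all $Q \in \cC^n(x)$, $n \in \mathbb{N}$, $x \in K$ (using that $K$ relatively compact implies $K$ bounded). Since for $Q' = Q \circ \Y_n^{-1} \in \cR^n(x)$ one has $E^{Q'}\big[\int \|\omega\|_T^p\, \mu(d\omega)\big] = \frac{1}{n}\sum_{k=1}^n E^Q[\|X^k\|_T^p] \leq \Lambda$, this bounds the ``$p$-th moment of the mean'' uniformly over $\cR(K)$. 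Because $\o < p$, a de la Vallée-Poussin / uniform integrability argument then shows that any sequence in $\cR(K)$ that converges weakly in $\fP(\fP(\Theta))$ in fact converges in $\W^\o(\W^\o(\Theta))$; so it suffices to prove relative compactness for the weak topology, i.e. tightness of $\cR(K) \subset \fP(\fP(\Theta))$.

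\textbf{Step 2: Reduction of the two tightness problems.} By the standard criterion (e.g. Sznitman), tightness of $\cR(K)$ in $\fP(\fP(\Theta))$ is equivalent to tightness of the set of ``mean measures'' $\{\bar Q' : Q' \in \cR(K)\} \subset \fP(\Theta)$, where $\bar Q'(\cdot) = \int \mu(\cdot)\, Q'(d\mu)$. For $Q' = Q \circ \Y_n^{-1}$ this mean measure is $\frac{1}{n}\sum_{k=1}^n Q \circ (X^k, M^k)^{-1}$. Projecting onto $\m$ is automatic (compactness of $\m$), so the task reduces to: the set of laws $\frac{1}{n}\sum_{k=1}^n Q \circ (X^k)^{-1}$ on $\Omega$, over all $Q \in \cC^n(x)$, $x \in K$, $n \in \mathbb{N}$, is tight.

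\textbf{Step 3: Tightness on $\Omega = C([0,T];H)$.} This is the main obstacle and where Condition~\ref{cond: compact assumption} or \ref{cond: iii alternative} enters. Here I would adapt the tightness arguments of \cite{bhatt1998interacting,C23,gatarekgoldys} to the controlled setting. Using the mild-solution representation $X^k_t = S_t x + \int_0^t S_{t-s} b(\cdots)\,ds + \int_0^t S_{t-s}\sigma(\cdots)\,dW^k_s$, one splits $X^k$ into a drift part and a stochastic convolution. The drift part is handled by the linear-growth bound \eqref{eq: LG1} together with Step~1, giving equicontinuity estimates; the stochastic convolution is controlled via the factorization method (the exponent $\alpha$ and condition \eqref{eq: DZ cond} are exactly what is needed), using \eqref{eq: LG2} and the moment bounds to get uniform $L^p$ bounds on a fractional-Sobolev-type norm whose embedding into $C([0,T];H)$ is compact --- here is where compactness of $S_t$ (Condition~\ref{cond: compact assumption}) or the eigenbasis structure with $\sum \cCk^2\lambda_k^{-\rho}<\infty$ (Condition~\ref{cond: iii alternative}, via Remark's choice \eqref{eq: choice kappa} and the computation that \eqref{eq: DZ cond} holds with $\alpha=(1-\rho)/2$) is used to pin down a compact set of paths. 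Averaging over $k$ and using the uniformity in Step~1 makes the resulting tightness bound independent of $n$, $x \in K$, and the choice of $Q$ and controls. Combining Steps 1--3 gives relative compactness of $\cR(K)$ in $\W^\o(\W^\o(\Theta))$, as claimed. I expect the stochastic-convolution tightness estimate --- obtaining a compact-embedding bound that is simultaneously uniform in the particle index, the number of particles, the initial datum, and the control --- to be the technically delicate point.
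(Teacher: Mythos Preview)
Your Steps~1--2 and the factorization argument under Condition~\ref{cond: compact assumption} match the paper's proof essentially line by line: the paper obtains the uniform $p$-moment bound \eqref{eq: 2nd part tightness} from Lemma~\ref{lem: moment estimates}, reduces via \cite[Corollary~B.2]{LakSPA15} to tightness of the mean measures $\{\overline{Q}:Q\in\cR(K)\}$ in $\fP(\Theta)$, disposes of the $\m$-component by compactness, and then---under Condition~\ref{cond: compact assumption}---uses exactly the Gatarek--Go{\l}dys factorization scheme you describe, with the compact operators $R_1,R_\alpha$ furnishing the compact sets $K_\ell\subset\Omega$.

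The divergence is in your treatment of Condition~\ref{cond: iii alternative}. Here the paper does \emph{not} push the factorization method; instead it proves a separate time-increment estimate (Lemma~\ref{lem: difference moment estimates}),
\[
\frac{1}{n}\sum_{k=1}^n E^P\big[\|X^k_t-X^k_s\|_H^p\big]\leq C\,|t-s|^{1+\delta},\qquad \delta=\tfrac{p(1-\rho)}{2}-1>0,
\]
obtained by expanding $S_{t-u}-S_{s-u}$ and $S_{t-u}$ in the Riesz basis and exploiting the summability \eqref{eq: summation_condition}, and then invokes Kolmogorov's tightness criterion.

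Your plan to handle Condition~\ref{cond: iii alternative} by the same factorization route has a genuine gap. The compactness of the factorization operator $R_\alpha\colon L^p([0,T];H)\to\Omega$ (and of $R_1$) hinges on compactness of $S_t$ for $t>0$---this is precisely Condition~\ref{cond: compact assumption} and is what \cite[Proposition~1]{gatarekgoldys} uses. Condition~\ref{cond: iii alternative} by itself does \emph{not} force the semigroup to be compact: the paper's multiplicative-semigroup example on $\ell^2$ (part~(v) of the Remark following Condition~\ref{cond: iii alternative}) shows one can have the Riesz-basis structure and \eqref{eq: summation_condition} while $\lim_n 1/\lambda_n=0$ fails, so $(S_t)_{t>0}$ is not compact. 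The computation you cite from part~(iv) of that Remark only shows that \eqref{eq: LG2} and \eqref{eq: DZ cond} hold for the $\kappa$ in \eqref{eq: choice kappa}; these are integrability conditions ensuring the factorization \emph{formula} \eqref{eq: fak idenity} and the moment bound \eqref{eq: moment bound Y fak meth} are valid, but they do not make $R_\alpha$ a compact operator. In infinite-dimensional $H$ a fractional-Sobolev time-regularity bound alone produces equicontinuity, not pointwise precompactness, so the step ``pin down a compact set of paths'' fails as written. You need to replace that step by the increment estimate of Lemma~\ref{lem: difference moment estimates}, which is where the spectral condition \eqref{eq: summation_condition} is actually consumed.
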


 Under Condition~\ref{cond: iii alternative}, our proof strategy is to apply Kolmogorov's tightness criterion to infer the relative compactness of $\cR(K)$. To apply it we need the following estimate that is proved below the proof of Lemma~\ref{lem: Rk rel comp}. 

\begin{lemma} \label{lem: difference moment estimates}
Suppose that the Conditions \ref{cond: main1} \textup{(i) -- (ii)} and Condition \ref{cond: iii alternative}  hold. Let \(K \subset \W^p (H)\) be bounded (in the sense as in Lemma~\ref{lem: moment estimates}) and define $\delta := p(1-\rho) / 2 -1>0 $. 
Then, there exists a constant $\C=\C(K,p,T) \in (0, \infty)$ such that
\begin{align} \label{eq: main estimate for KTC}
    \sup_{n\in\mathbb N}\sup_{P\in \cC^n(K)} \dfrac{1}{n}\sum_{k=1}^n E^P\big[\| (X^k_t - S_t X^k_0) - (X^k_s - S_s X^k_0)\|^p_H \big]\leq \C \hspace{0.05cm}|t-s|^{1+\delta}
\end{align}
for all $s, t \in [0, T]$.
\end{lemma}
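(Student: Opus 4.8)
The plan is to estimate $\|X^k_t - X^k_s\|_H^p$ directly from the mild formulation in Definition \ref{def: Cn}(ii), splitting the increment into a drift part, a stochastic part, and a term coming from the semigroup acting on the initial condition and on the already-accumulated mild solution up to time $s$. Write $t > s$ and decompose
\begin{align*}
X^k_t - X^k_s &= (S_{t-s} - \operatorname{Id}) X^k_s + \int_s^t S_{t-r} b(\xi^k_r, r, X^k, \X_n(X_{\cdot \wedge r})) \, dr \\
&\qquad + \int_s^t S_{t-r} \sigma(\xi^k_r, r, X^k, \X_n(X_{\cdot \wedge r})) \, dW^k_r.
\end{align*}
Here the key structural point — and the reason Condition \ref{cond: iii alternative} enters — is that we cannot control $\|(S_{t-s} - \operatorname{Id}) X^k_s\|_H$ pathwise with a power of $|t-s|$, since $X^k_s$ need not lie in $D(A)$. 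Instead I would expand everything in the Riesz basis $(e_k)$ and use $\langle \cdot, e_j\rangle$ coordinates: for a fixed basis vector $e_j$ one has $\langle (S_{t-s} - \operatorname{Id}) X^k_s, e_j \rangle_H = \langle X^k_s, (S_{t-s}^* - \operatorname{Id}) e_j \rangle_H = (e^{-\lambda_j(t-s)} - 1)\langle X^k_s, e_j\rangle_H$, and $|e^{-\lambda_j(t-s)} - 1| \le (\lambda_j(t-s))^{\theta}$ for any $\theta \in [0,1]$. Summing $|\langle \cdot, e_j\rangle|^2$ against the $\cCk^2$-weighted comparison from \eqref{eq: Bessel estimation}, the free exponent $\theta$ should be chosen so that the resulting factor $\sum_j \cCk^2 \lambda_j^{2\theta}$ matches the summability budget \eqref{eq: summation_condition}; since $\delta = p(1-\rho)/2 - 1$, one wants $\theta = (1-\rho)/2$ after raising to the $p/2$ power, so that $|t-s|^{\theta p} = |t-s|^{p(1-\rho)/2} = |t-s|^{1+\delta}$. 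The Riesz-basis comparison constants $c, C$ only contribute to $\C$.

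For the drift term, Jensen/Hölder in $dr$ over the interval of length $|t-s|$ gives a factor $|t-s|^{p-1}$ times $\int_s^t \|S_{t-r} b(\cdots)\|_H^p \, dr$; using $\|S_{t-r}\|_{L(H)} \le M$ (uniform bound from strong continuity on $[0,T]$), the linear growth bound \eqref{eq: LG1}, and the uniform moment bound from Lemma \ref{lem: moment estimates} (which applies since $K$ is bounded), this is $\le C|t-s|^p \le C|t-s|^{1+\delta}$ because $p \ge 1 + \delta$ (indeed $\delta = p(1-\rho)/2 - 1 < p - 1$ as $\rho > 0$). For the stochastic term, apply the Burkholder–Davis–Gundy inequality in the coordinate $\langle \cdot, e_j\rangle_H$: the $p$-th moment of $\sup$ of $\int_s^t \langle S_{t-r}\sigma(\cdots) dW^k_r, e_j\rangle_H = \int_s^t \langle \sigma^*(\cdots) S_{t-r}^* e_j, dW^k_r\rangle_U$ is controlled by $E\big[(\int_s^t \|\sigma^*(\cdots) S_{t-r}^* e_j\|_U^2 dr)^{p/2}\big] = E\big[(\int_s^t e^{-2\lambda_j(t-r)}\|\sigma^*(\cdots)e_j\|_U^2 dr)^{p/2}\big]$; by \eqref{eq: estimate_b_sigma_in_summation_condition} and $\int_s^t e^{-2\lambda_j(t-r)} dr \le \min(|t-s|, \tfrac{1}{2\lambda_j}) \le |t-s|^{1-\rho}\lambda_j^{-\rho}$ (interpolating the two bounds with exponents $1-\rho$ and $\rho$), one gets $\le \cCk^p \lambda_j^{-\rho p/2} |t-s|^{p(1-\rho)/2} [1 + \text{moments}]^{p/2}$, and summing over $j$ uses $\sum_j \cCk^2 \lambda_j^{-\rho} < \infty$ after converting the $\ell^{p}$-type sum to an $\ell^2$-type sum via $\|X^k_t - X^k_s\|_H^2 \asymp \sum_j |\langle X^k_t - X^k_s, e_j\rangle_H|^2$ — i.e. one should estimate $E[\|X^k_t - X^k_s\|_H^p]$ by first bounding $E[(\sum_j |\langle \cdot, e_j\rangle|^2)^{p/2}]$, using Minkowski's inequality in $L^{p/2}$ to pull the sum outside.

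The main obstacle, and the step that requires care, is precisely this interchange of the infinite sum over the Riesz basis with the $p$-th moment and the $L^{p/2}$ norm: one must apply Minkowski's integral inequality in the form $\big\|\sum_j Z_j\big\|_{L^{p/2}} \le \sum_j \|Z_j\|_{L^{p/2}}$ with $Z_j = |\langle X^k_t - X^k_s, e_j\rangle_H|^2$ (valid since $p/2 \ge 1$ by \eqref{eq: constants}), handle the three pieces of the decomposition separately inside, and at the end absorb the $\sum_j \cCk^2 \lambda_j^{-\rho}$ and $\sum_j \cCk^2 \lambda_j^{(1-\rho)}$-type constants — one must double-check that the exponent bookkeeping genuinely lands all three terms on $|t-s|^{1+\delta}$ and that the chosen interpolation $\min(a,b) \le a^{1-\rho} b^{\rho}$ lines up with the summability budget \eqref{eq: summation_condition}. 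All coefficient- and moment-dependent quantities are absorbed into $\C = \C(K, p, T)$, using boundedness of $K$ through Lemma \ref{lem: moment estimates} and the finiteness in \eqref{eq: summation_condition}; averaging over $k = 1, \dots, n$ and taking the supremum over $n$ and over $P \in \cC^n(K)$ is then harmless since every bound obtained is uniform in $k$, $n$, and $P$. Finally, the restriction to Condition \ref{cond: iii alternative} (rather than the mere compact-semigroup Condition \ref{cond: compact assumption}) is essential here because only the former supplies the quantitative spectral decay needed for a Hölder-in-time modulus of continuity; under Condition \ref{cond: compact assumption} alone one argues tightness of $\cR(K)$ by a different route (the Aldous-type / compactness-of-$S_t$ argument), which is why Lemma \ref{lem: difference moment estimates} is only invoked in that branch of the proof of Lemma \ref{lem: Rk rel comp}.
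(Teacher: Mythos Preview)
Your handling of the drift and stochastic integrals over $[s,t]$ is essentially fine, but the treatment of the term $(S_{t-s}-\operatorname{Id})X^k_s$ does not work, and this is a genuine gap rather than a bookkeeping issue. After writing
\[
\big|\langle (S_{t-s}-\operatorname{Id})X^k_s, e_j\rangle_H\big|^2 \le (\lambda_j(t-s))^{2\theta}\,|\langle X^k_s, e_j\rangle_H|^2,
\]
you assert that summing over $j$ produces a factor $\sum_j \cCk^2 \lambda_j^{2\theta}$. There is no basis for this: the constants $\cCk$ bound $\langle b,e_j\rangle_H$ and $\sigma^*e_j$ via \eqref{eq: estimate_b_sigma_in_summation_condition}, not the coordinates $\langle X^k_s,e_j\rangle_H$ of the solution, and the Riesz comparison \eqref{eq: Bessel estimation} only controls the \emph{unweighted} sum $\sum_j|\langle X^k_s,e_j\rangle|^2$. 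What you would actually need is $\sum_j \lambda_j^{2\theta}|\langle X^k_s,e_j\rangle_H|^2<\infty$, i.e.\ regularity of $X^k_s$ in $D((-A^*)^\theta)$, which is not available. Moreover, even granting your identification, the choice $\theta=(1-\rho)/2$ yields $\sum_j \cCk^2\lambda_j^{1-\rho}$, whereas \eqref{eq: summation_condition} only guarantees $\sum_j \cCk^2\lambda_j^{-\rho}<\infty$; the exponents are off by one and the sum need not converge.

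The paper avoids this by a different decomposition: rather than isolating $(S_{t-s}-\operatorname{Id})X^k_s$, it writes $X^k=S_\cdot x + Z^k + Y^k$ with $Z^k_t=\int_0^t S_{t-u}b\,du$ and $Y^k_t=\int_0^t S_{t-u}\sigma\,dW^k_u$, and splits each of $Y^k_t-Y^k_s$, $Z^k_t-Z^k_s$ into an $\int_0^s(S_{t-u}-S_{s-u})(\cdots)\,du$ piece plus an $\int_s^t S_{t-u}(\cdots)\,du$ piece. The point is that the semigroup difference now acts on $b$ and $\sigma$, so the $\cCk$ bounds \emph{do} apply, and crucially the time integral over $[0,s]$ supplies an extra factor $\int_0^s e^{-2\lambda_j(s-u)}du\le(2\lambda_j)^{-1}$; combined with $(1-e^{-\lambda_j(t-s)})^2\le(\lambda_j(t-s))^{1-\rho}$ this gives $\lambda_j^{-\rho}|t-s|^{1-\rho}$, which matches \eqref{eq: summation_condition} exactly. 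If you substitute the mild formula for $X^k_s$ back into your $(S_{t-s}-\operatorname{Id})X^k_s$ you recover precisely this decomposition, so the remedy is to abandon the ``flow at time $s$'' splitting and work with the full stochastic convolution from time $0$.
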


\begin{proof}[Proof of Lemma \ref{lem: Rk rel comp}]
First, we show that 
\begin{align} \label{eq: 2nd part tightness}
\sup_{Q \in \cR (K)} \iint \mathsf{d} (\theta, \theta_0)^p \, \mu (d \theta) \, Q (d \mu) < \infty. 
\end{align}
For \(P \in \cC^n (\nu)\) with \(\nu \in K\), we obtain 
\begin{align*}
 \iint \mathsf{d} (\theta, \theta_0)^p\, \mu (d \theta) \, P \circ \Y^{-1}_n (d \mu) &= \frac{1}{n} \sum_{k = 1}^n E^P \Big[ \big(\|X^k\|_T + \mathsf{r} (M^k, m_0) \big)^p \Big] 
 \\&\leq \frac{2^{p - 1}}{n} \sum_{k = 1}^n E^P \big[ \|X^k\|_T^p \big] + 2^{p - 1}.
\end{align*}
Thanks to Lemma \ref{lem: moment estimates}, this estimate proves \eqref{eq: 2nd part tightness}.

\smallskip
For a measure \(\Q = P \circ \Y^{-1}_n \in \mathcal{R}^n (\nu)\), set
	\[
	\overline{Q} (G) := \frac{1}{n} \sum_{k = 1}^n P ( (X^k, M^k) \in G), \quad G \in \mathcal{O}.
	\]
If we show that \(\{\overline{Q} \colon Q \in \cR(K)\}\) is tight in \(\fP(\Theta)\) then, as \(p > \o\), by virtue of \eqref{eq: 2nd part tightness}, \cite[Corollary~B.2]{LakSPA15} implies that the set \(\cR (K)\) is relatively compact in \(\W^\o (\W^\o (\Theta))\).
    To do this, we divide the proof into two cases. In the first case, we  establish the tightness of \(\{\overline{Q} \colon Q \in \cR(K)\}\) under Condition \ref{cond: compact assumption}, and in the second under Condition \nolinebreak \ref{cond: iii alternative}.

    \smallskip 
    \textup{(i).} We prove the tightness of $\{\overline{Q} \colon Q \in \cR(K)\}$  under Condition  \ref{cond: compact assumption}, using the compactness method from \cite{gatarekgoldys}. First of all, since \(K \subset \W^p (H)\) is relatively compact in \(\W (H)\), it is tight. Hence, for every \(\varepsilon > 0\), there exists a compact set \(K'_\varepsilon \subset H\) such that 
    \[
    \sup_{\nu \in K} \nu ( H \setminus K'_\varepsilon) \leq \varepsilon.
    \] 
    From now on, we fix \(\varepsilon > 0\) and take the compact set \(K'_\varepsilon \subset H\) as above.
    For \(h \in L^p ([0, T]; H)\) and \(\lambda \in (1/p, 1]\), we set 
	\[
	R_\lambda h (t) := \int_0^t (t - s)^{\lambda - 1} S_{t - s} h (s) \, ds, \quad t \in [0, T], 
	\]
	where \((S_t)_{t \geq 0}\) is the semigroup generated by \(A\). Thanks to Condition~\ref{cond: compact assumption}, by \cite[Proposition~1]{gatarekgoldys}, \(R_\lambda\) is a compact operator from \(L^p ([0, T]; H)\) into \(\Omega\).
	For \(\ell > 0\), set 
	\begin{align*}
	K_\ell := \Big\{ \omega \in \Omega \colon \, &\omega  = S x + R_1 \psi + \tfrac{\sin(\pi \alpha)}{\pi} R_\alpha \phi, \\
	&x \in K'_\varepsilon, \, \psi, \phi \in L^p ([0, T]; H) \text{ with } \int_0^T \|\psi (s)\|^p_H \, ds \vee \int_0^T \|\phi (s)\|^p_H \, ds \leq \ell \, \Big\}.
	\end{align*}
By the compactness of the set \(K'_\varepsilon \subset H\) and the strong continuity of the semigroup \((S_t)_{t \geq 0}\), the set \(\{ S_\cdot x \colon x \in K'_\varepsilon\}\) is compact in \(\Omega\) by \cite[Lemma~1.5.2]{EN} and the Arzel\`a--Ascoli theorem (\cite[Theorem~A.5.2]{Kallenberg}). 
This together with the compactness of the operators \(R_1\) and \(R_\alpha\) entails relative compactness of \(K_\ell\) in \(\Omega\). 
Take \(P \in \cC^n (\nu)\), with \(\nu \in K\), and \(k \in \{1, \dots, n\}\).
The factorization formula (see Step 0 of the proof for \cite[Theorem 2.5]{C23} for a recap of the method) shows that \(P\)-a.s.
\begin{align} \label{eq: fak idenity}
X^k = S X^k_0 + R_1 ( s \mapsto b (\xi_s, s, X^k, \X_n (X_{\cdot \wedge s}))) + \tfrac{\sin(\pi \alpha)}{\pi} R_\alpha Y,
\end{align}
where 
\[
Y_t := \int_0^t (t - s)^{- \alpha} S_{t - s} \sigma (\xi^k_s, s, X^k, \X_n (X_{\cdot \wedge s})) \, d W^k_s, \quad t \in [0, T].
\]
Furthermore, by Eq. (4.4) from \cite{C23}, and Condition~\ref{cond: main1}~(ii), we have 
\begin{align} \label{eq: moment bound Y fak meth}
E^P \Big[ \int_0^T \|Y_s\|^p_H \, ds \Big] \leq C \Big( \int_0^T \Big[\frac{\j (s)}{s^{\alpha}} \Big]^2 \, ds \Big)^{p/2} E^P \Big[ \int_0^T \Big(1 + \|X^k\|^p_s + \frac{1}{n} \sum_{i = 1}^n \|X^i\|_s^p \Big) \, ds \Big].
\end{align}
Using \eqref{eq: fak idenity}, \eqref{eq: moment bound Y fak meth} and the definition of \(K_\ell\), it follows that
\begin{align*}
P (X^k \in K_\ell) &\geq 1 - \varepsilon - \frac{1}{\ell} \Big( E^P \Big[ \int_0^T \|b (\xi^k_s, s, X^k, \X_n (X_{\cdot \wedge s})\|_H^p \, ds \Big] + E^P \Big[ \int_0^T \|Y_s\|^p_H \, ds \Big] \Big)
\\&\geq 1 - \varepsilon - \frac{C}{\ell} E^P \Big[ \int_0^T \Big(1 + \|X^k\|^p_s + \frac{1}{n} \sum_{i = 1}^n \|X^i\|_s^p \Big) \, ds \Big],
\end{align*}
where we again used Condition~\ref{cond: main1}~(ii).
Now, by the moment bound from Lemma \ref{lem: moment estimates}, there exists a constant \(C > 0\) such that 
\begin{equation*}\begin{split}
	\frac{1}{n} \sum_{k = 1}^n P (X^k \in K_\ell) &\geq 1 - \varepsilon - \frac{C}{\ell} - \frac{C}{\ell} \frac{1}{n} \sum_{k = 1}^n E^P \Big[ \|X^k\|_T^p + \frac{1}{n} \sum_{i = 1}^n \|X^i\|_T^p \Big] 
	\\&= 1 - \varepsilon - \frac{C}{\ell} - \frac{2 C}{\ell} \frac{1}{n} \sum_{k = 1}^n E^P \big[ \|X^k\|_T^p \big]
	\\&\geq 1 - \varepsilon - \frac{C}{\ell}.
\end{split} \end{equation*}
Consequently, we obtain that
\[
\overline{P \circ \Y^{-1}_n} (K_\ell \times \m) = \frac{1}{n} \sum_{k = 1}^n P ( (X^k, M^k) \in K_\ell \times \m) \geq 1 - \varepsilon - \frac{C}{\ell}.
\]
As \(\m\) is compact, taking \(\ell\) large enough and recalling that \(\varepsilon > 0\) was arbitrary, this estimate shows tightness of \(\{ \overline{Q} \colon Q \in \cR (K)\}\) in the space \(\fP (\Theta)\) and hence, tightness of \(\cR (K)\) in \(\fP(\fP (\Theta))\). 
This proves the statement in case Condition \nolinebreak\ref{cond: compact assumption} holds. 

\smallskip
\textup{(ii).} 
Next, we proceed to establish the tightness of \(\{\overline{Q} \colon Q \in \cR(K)\} \subset \fP (\Theta)\) under Condition \nolinebreak \ref{cond: iii alternative}. Let \(K'_\varepsilon \subset H\) be a compact set as in (i) above. It follows from \cite[Lemma~I.5.2]{EN} and the Arzel\`a--Ascoli theorem (\cite[Theorem~A.5.2]{Kallenberg}) that the set \(C'_\varepsilon := \{ S_t x \colon x \in K'_\varepsilon \}\) is relatively compact in~\(\Omega\). Hence, for \(Q \in \cR (K)\) with \(Q = P \circ \Y_n^{-1}\), we get that  
\[
\overline{Q} ( S X_0 \in C'_\varepsilon ) = \frac{1}{n} \sum_{k = 1}^n P (S X_0 \in C'_\varepsilon) \geq \nu ( K'_\varepsilon ) \geq 1 - \varepsilon, 
\] 
which entails tightness of the family \( \{ \overline{Q} \circ (S X_0)^{-1} \colon Q \in \cR (K) \}\) in \(\W(\Omega)\). 
Furthermore, since
\begin{align*}
    \sup_{Q\in\cR(K)} E^{\overline Q} \big[\| &(X_t - S_t X_0) -(X_s - S_s X_0) \|^p_H \big] 
    \\&= \sup_{n\in\mathbb N}\sup_{P\in \cC^n(K)} \dfrac{1}{n}\sum_{k=1}^n E^P\big[\| (X^k_t - S_t X^k_0) - (X^k_s - S_s X^k_0) \|^p_H \big],
\end{align*}
we obtain tightness of the set $\{\overline{Q} \circ (X - SX_0)^{-1} \colon Q \in \cR(K)\}\subset \fP (\Omega)$ from Lemma \ref{lem: difference moment estimates} and Kolmogorov's tightness criterion (\cite[Theorem 23.7]{Kallenberg}). As \((\omega, \omega') \mapsto \omega + \omega'\) is continuous from \(\Omega \times \Omega\) into \(\Omega\), we conclude that \( \{ \overline{Q} \circ X^{-1} \colon Q \in \cR (K)\}\) is tight in \(\W(\Omega)\). Finally, as the space \(\m\) is compact, this implies tightness of \(\{ \overline{Q} \colon Q \in \cR (K)\}\subset  \fP (\Theta)\) and hence, tightness of \(\cR (K)\) in \(\fP(\fP (\Theta))\). This proves the statement under the assumption of Condition \ref{cond: iii alternative}.
\end{proof}
Before we prove Lemma \ref{lem: difference moment estimates}, we present a simple but useful estimate for the Hilbert-Schmidt norm of a linear operator. 
\begin{lemma}\label{lem: hilbert schmidt estimate}
Let $(e_k)_{k = 1}^\infty \subset H$ be a Riesz basis. Then, there  is a constant $C>0$  such that for every \(L \in L (U, H)\), it holds that
\begin{align*}
    \|L\|^2_{L_2 (U, H)}\leq C\sum_{k = 1}^\infty\|L^*e_k\|^2_{U}.
\end{align*}
\end{lemma}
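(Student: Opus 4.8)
The plan is to reduce the statement to the basic series identity for the Hilbert--Schmidt norm combined with the lower frame bound of a Riesz basis recorded in Remark~\ref{rem: biorthogonal seq}. First I would fix an orthonormal basis $(u_j)_j$ of $U$ (finite or countable, $U$ being a separable Hilbert space) and recall that, by definition, $\|L\|_{L_2(U,H)}^2 = \sum_j \|L u_j\|_H^2$, independently of the chosen basis. Since $(e_k)_{k=1}^\infty$ is a Riesz basis, Remark~\ref{rem: biorthogonal seq} — specifically the left inequality in \eqref{eq: Bessel estimation} — supplies a constant $c>0$ with $c\,\|f\|_H^2 \le \sum_{k=1}^\infty |\langle f, e_k\rangle_H|^2$ for every $f\in H$; the natural candidate for the claimed constant is then $C := 1/c = \|\T^{-1}\|_{L(H)}$.

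The computation then proceeds in three short steps. Applying the lower frame bound to $f = L u_j$ for each $j$ and summing over $j$ gives
\[
\|L\|_{L_2(U,H)}^2 = \sum_j \|L u_j\|_H^2 \le C \sum_j \sum_{k} |\langle L u_j, e_k\rangle_H|^2 .
\]
Next, all summands being non-negative, Tonelli's theorem lets me interchange the order of summation, and the adjoint relation $\langle L u_j, e_k\rangle_H = \langle u_j, L^* e_k\rangle_U$ turns the inner sum over $j$, via Parseval's identity in $U$, into $\|L^* e_k\|_U^2$. This yields $\|L\|_{L_2(U,H)}^2 \le C \sum_{k} \|L^* e_k\|_U^2$, which is the assertion.

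There is essentially no substantial obstacle here; the only points requiring a word of care are bookkeeping ones. First, $L$ is only assumed to be a bounded operator, so $\|L\|_{L_2(U,H)}$ may a priori be $+\infty$; since every quantity in the above chain is non-negative, the interchange of sums is unconditionally valid and the inequality holds regardless (trivially when the right-hand side is infinite, and with the stated constant otherwise — in particular it shows $L$ is Hilbert--Schmidt as soon as $\sum_k \|L^*e_k\|_U^2 < \infty$). Second, one should note that it is the \emph{lower} Riesz bound that is being used, which is precisely why no hypothesis on $(e_k)$ beyond being a Riesz basis is needed. An alternative route would be to pass to the equivalent inner product on $H$ for which $(e_k)$ is orthonormal (Remark~\ref{rem: biorthogonal seq}) and compare the two Hilbert--Schmidt norms, but that forces one to track how the adjoint of $L$ changes under the change of inner product on $H$, so the direct frame-bound argument above is the cleaner one.
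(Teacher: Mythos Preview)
Your proposal is correct and follows essentially the same route as the paper's proof: fix an orthonormal basis of $U$, apply the lower Riesz bound from \eqref{eq: Bessel estimation} to each $Lu_j$, interchange the order of summation, and use the adjoint relation together with Parseval's identity in $U$. The additional remarks you make about the possibly infinite Hilbert--Schmidt norm and the alternative equivalent-inner-product route are sound but not needed for the argument.
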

\begin{proof}
Let $(u_\ell)_{\ell = 1}^\infty$ be an orthonormal basis of $U$. Then, by \eqref{eq: Bessel estimation} and Fubini's theorem, it holds that
\begin{align*}
\|L\|_{L_2 (U, H)}^2 &=\sum_{\ell = 1}^\infty\| Lu_{\ell}\|^2_{H} \leq C\sum_{\ell = 1}^\infty\sum_{k = 1}^\infty|\langle Lu_{\ell},e_k\rangle_{H}|^2
\\& =C\sum_{k = 1}^\infty\sum_{\ell = 1}^\infty |\langle u_{\ell},L^*e_k\rangle_{U}|^2 = C\sum_{k = 1}^\infty\|L^*e_k\|^2_{U}.
\end{align*}
This yields the claimed estimate.
\end{proof}
\begin{proof}[Proof of Lemma \ref{lem: difference moment estimates}]
It follows from \eqref{eq: constants} that $p>2/(1-\rho)$ and consequently, 
\begin{align*}
    \delta = (p/2)(1-\rho)-1>\frac{2}{2(1-\rho)}(1-\rho)-1=0.
\end{align*}
In the following, we establish the estimate \eqref{eq: main estimate for KTC}, where we adapt an idea from the proof of \cite[Theorem 2.6]{bhatt1998interacting}. Fix an arbitrary measure \(P \in \cC^n (\nu)\) for $\nu\in K$. 
By Definition~\ref{def: Cn}, there exist  \(F\)-valued \(\mathbf{O}^n\)-predictable processes \(\xi^1, \dots, \xi^n\) and independent cylindrical standard Brownian motions \(W^1, \dots, W^n\) such that, for \(k = 1, \dots, n\) and $X=(X^1,\dots,X^n)$, we have a.s., for all \(t \in [0, T]\),
	\begin{align*}
		X^k_t = S_t X^k_0 &+ \int_0^t S_{t - s} b (\xi^k_s, s, X^k, \X_n (X_{\cdot \wedge s})) \, ds 
		+ \int_0^t S_{t - s} \sigma (\xi^k_s, s, X^k, \X_n (X_{\cdot \wedge s})) \, d W^k_s. 
	\end{align*}
We define the auxiliary processes $Y^k=(Y^k_t)_{t\geq 0}$ and $Z^k=(Z^k_t)_{t\geq 0}$ by
    \begin{align*}
         Z^k_t &:= \int_0^t S_{t - u} b (\xi^k_u, u, X^k, \X_n (X_{\cdot \wedge u})) \, du, \\
         Y^k_t &:= \int_0^t S_{t - u} \sigma (\xi^k_u, u, X^k, \X_n (X_{\cdot \wedge u})) \, d W^k_u.
    \end{align*}
Clearly, we have
\begin{align}\label{eq: compact_lemma_eq1}
    E^P\big[\| (X^k_t - S_t X^k_0) - (X^k_s - S_s X^k_0) \|^p_H\big]\leq  C\,\big(E^P \big[\|Y^k_t-Y^k_s\|^p_H \big]+ E^P \big[\|Z^k_t-Z^k_s\|^p_H \big]\big).
\end{align}
  Using \cite[Corollary 3.3.2]{gawa}, Lemma~\ref{lem: hilbert schmidt estimate} and Condition~\ref{cond: iii alternative}, we obtain that
\begin{align}
    E^P  \big[ & \|  Y^k_t -Y^k_s\|_{H}^p\big] \nonumber
    \\&\leq C\,\Big( E^P\Big[\Big(\int_0^s \|(S_{t-u}-S_{s-u}) \sigma (\xi^k_u, u, X^k, \X_n (X_{\cdot \wedge u}))\|^2_{L_2 (U, H)} \,du\Big)^{p/2}\Big] \nonumber 
    \\ 
    &\qquad\qquad +E^P\Big[\Big(\int_s^t \|S_{t-u}\sigma (\xi^k_u, u, X^k, \X_n (X_{\cdot \wedge u}))\|^2_{L_2 (U, H)} \, du\Big)^{p/2}\Big]\Big)\nonumber
    \\ 
    &\leq C\,\Big( E^P\Big[ \Big(\int_0^s \sum_{\ell=1}^{\infty} \|\sigma^*(\xi^k_u, u, X^k, \X_n (X_{\cdot \wedge u}))(S^*_{t-u}-S^*_{s-u})e_\ell \|^2_{U} \, du\Big)^{p/2} \Big]\nonumber
    \\ 
     &\qquad\qquad +E^P\Big[ \Big(\int_s^t \sum_{\ell =1}^{\infty} \|\sigma^*(\xi^k_u, u, X^k, \X_n (X_{\cdot \wedge u})) S^*_{t-u}e_\ell \|^2_{U} \, du\Big)^{p/2} \Big]\Big) \label{eq: compact_lemma_eq2}
     \\
    &\leq C\,\Big( E^P\Big[ \Big(\int_0^s \sum_{\ell=1}^{\infty} \mathsf{c}_\ell^2 \big|e^{-\lambda_\ell(t-u)}-e^{-\lambda_\ell(s-u)}\big|^2 \big(1+\|X^k\|_{u}^2+\| \X_n (X_{\cdot \wedge u}) \|^2_p \big) \, du\Big)^{p/2} \Big] \nonumber
    \\ 
    &\qquad\qquad + E^P\Big[ \Big(\int_s^t \sum_{\ell=1}^{\infty} \mathsf{c}_\ell^2 e^{-2\lambda_\ell(t-u)} \big(1+\|X^k\|_{u}^2+\| \X_n (X_{\cdot \wedge u}) \|^2_p\big) \, du\Big)^{p/2} \Big]\Big ), \nonumber
\end{align}
and, using \eqref{eq: Bessel estimation}, we get that
\begin{align}
   E^P  \big[ & \| Z^k_t - Z^k_s\|_{H}^p \big] \nonumber
     \\&\leq C\,\Big( E^P\Big[\Big(\int_0^s \|(S_{t-u}-S_{s-u})b(\xi^k_u, u, X^k, \X_n (X_{\cdot \wedge u}))\|^2_{H} \, du\Big)^{p/2}\Big] \nonumber
     \\ 
    &\qquad\qquad +E^P\Big[\Big(\int_s^t \|S_{t-u}b(\xi^k_u, u, X^k, \X_n (X_{\cdot \wedge u}))\|^2_{H} \, du\Big)^{p/2}\Big]\Big) \nonumber
    \\ 
    &\leq C\,\Big ( E^P\Big[ \Big(\int_0^s \sum_{\ell =1}^{\infty} \big|\langle b(\xi^k_u, u, X^k, \X_n (X_{\cdot \wedge u}),(S^*_{t-u}-S^*_{s-u})e_\ell\rangle_{H}\big|^2 \, du\Big)^{p/2} \Big] \nonumber
    \\
     &\qquad\qquad +E^P\Big[ \Big(\int_s^t \sum_{\ell =1}^{\infty} \big | \langle b(\xi^k_u, u, X^k, \X_n (X_{\cdot \wedge u})), S^*_{t-u}e_\ell \rangle_{H}\big |^2 \, du\Big)^{p/2} \Big]\Big) \label{eq: compact_lemma_eq3}
     \\ 
    &\leq C\,\Big( E^P\Big[ \Big(\int_0^s \sum_{\ell =1}^{\infty} \mathsf{c}_\ell^2 \big|e^{-\lambda_\ell(t-u)}-e^{-\lambda_\ell(s-u)} \big|^2 \big(1+\|X^k\|_{u}^2+\| \X_n (X_{\cdot \wedge u}) \|^2_p \big) \, du\Big)^{p/2} \Big] \nonumber
    \\ 
    &\qquad\qquad + E^P\Big[ \Big(\int_s^t \sum_{\ell =1}^{\infty} \mathsf{c}_\ell^2 e^{-2\lambda_\ell(t-u)} \big(1+\|X^k\|_{u}^2+\| \X_n (X_{\cdot \wedge u}) \|^2_p\big) \, du\Big)^{p/2} \Big]\Big). \nonumber
\end{align}
We define 
\begin{align*}
    \psi_1(u) :=\sum_{\ell =1}^\infty\mathsf{c}_\ell^2 \big|e^{-\lambda_\ell(t-u)} -e^{-\lambda_\ell(s-u)}\big|^2
    \quad\text{and} \quad
    \psi_2(u):=  \sum_{\ell =1}^{\infty} \mathsf{c}_\ell^2 e^{-2\lambda_\ell(t-u)},
\end{align*}
and obtain, with Lemma~\ref{lem: moment estimates}, that 
\begin{equation} \label{eq: compact_lemma_eq4}
\begin{split}
     \frac{1}{n}\sum_{k=1}^n E^P\Big[\Big(\int_0^s&\, \psi_1(u)(1+\|X^k\|_{u}^2+\| \X_n (X_{\cdot \wedge u}) \|^2_p) \, du\Big)^{p/2}\Big]\\
    &\leq C \, \Big(\int_0^s\psi_1(u) \, du\Big)^{p/2} \Big( 1 + \frac{1}{n}\sum_{k=1}^nE^P\big[\|X^k\|_{T}^p\big]\Big)\\ 
    &\leq C\, \Big(\int_0^s\psi_1(u) \, du\Big)^{p/2},
\end{split}
\end{equation}
and, similarly,
\begin{align} \label{eq: compact_lemma_eq5}
\begin{split}
    \frac{1}{n}\sum_{k=1}^n &E^P\Big[\Big(\int_s^t\psi_2(u)(1+\|X^k\|_{u}^2+\| \X_n (X_{\cdot \wedge u}) \|^2_p) \, du\Big)^{p/2}\Big] \leq C \, \Big(\int_s^t\psi_2(u) \, du\Big)^{p/2}.
\end{split}
\end{align}
Using \eqref{eq: compact_lemma_eq1} -- \eqref{eq: compact_lemma_eq5}, it follows that
\begin{equation}\label{eq: estimate for Xt-Xs}
	\begin{split}
    \dfrac{1}{n}\sum_{k=1}^n E^P &\big[\| (X^k_t- S_t X^k_0) - (X^k_s - S_s X^k_0)\|^p_H \big] 
    \\&\leq C\,\Big(\Big(\int_0^s\psi_1(u) \, du\Big)^{p/2}+\Big(\int_s^t\psi_2(u) \, du\Big)^{p/2}\Big).
\end{split}
\end{equation} 
Recall the elementary inequality 
\begin{align} \label{eq: ele ineq}
1-e^{-x}\leq 1 \wedge x \leq x^{\varepsilon}
\end{align} 
for \(x > 0\) and \(\varepsilon \in [0, 1]\).
By Fubini's theorem, Condition \ref{cond: iii alternative}, and using \eqref{eq: ele ineq} with $\varepsilon=(1-\rho)/2\in [0,1]$,  we obtain that
\begin{align}\label{eq: estimate psi1}
\begin{split}
    \int_0^s\psi_1(u) \, du 
    &= \frac{1}{2} \sum_{k = 1}^\infty \cCk^2 \lambda_k^{-1} \big(1 - e^{- 2\lambda_k s}\big) \big(1 - e^{- \lambda_k (t - s)}\big)^2 
    \\&\leq \frac{1}{2}\sum_{k = 1}^\infty\cCk^2\lambda_k^{-1} \big(1 - e^{-\lambda_k(t-s)}\big)^2
    \\&\leq\frac{1}{2}\sum_{k = 1}^\infty\cCk^2\lambda_k^{-1} \big(\lambda_k (t-s) \big)^{1-\rho}
    \\&\leq C\, |t-s|^{1-\rho}. \phantom \int
\end{split}
\end{align}
Similarly, using \eqref{eq: ele ineq} with $\varepsilon = (1-\rho)\in [0,1]$, we also get that
\begin{align}\label{eq: estimate psi2}
\begin{split}
    \int_s^t\psi_2(u) \, du &= \frac{1}{2}\sum_{k = 1}^\infty\cCk^2\lambda_k^{-1}\big(1 - e^{-2\lambda_k(t-s)}\big) 
    \\&\leq \sum_{k = 1}^\infty\cCk^2\lambda_k^{-1}\big(\lambda_k(t-s)\big)^{1-\rho}
    \\&\leq C\, |t-s|^{1-\rho}. \phantom \int 
\end{split}
\end{align}
Recalling that \(p\, (1 - \rho) / 2 = 1 + \delta\), and taking the estimates \eqref{eq: estimate for Xt-Xs}, \eqref{eq: estimate psi1} and \eqref{eq: estimate psi2} into consideration, we conclude that \eqref{eq: main estimate for KTC} holds. 
\end{proof}

\begin{lemma} \label{lem: C comp}
   Suppose that the Conditions \ref{cond: main1} \textup{(i) -- (ii)} and one of the Conditions~\ref{cond: compact assumption} and \ref{cond: iii alternative} hold.
	Let \(K \subset \W^p (H)\) be bounded (in the sense of Lemma~\ref{lem: moment estimates}) and compact in \(\W^q (H)\).  
	Then, the set
	\begin{align*}
	\cC^0 (K) &:= \bigcup_{\nu \in K} \cC^0 (\nu)
	\end{align*}
	is 
	compact in \(\W^\o (\Theta)\).
\end{lemma}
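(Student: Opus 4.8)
The plan is to prove that $\cC^0(K)$ is at the same time relatively compact and closed in $\W^\o(\Theta)$; since $\Omega$ is Polish and $\m$ is compact metrizable, $\W^\o(\Theta)$ is a Polish (hence metric) space, so these two facts together yield compactness. Replacing $K$ by its closure in $H$, which only enlarges $\cC^0(K)$, I may and do assume that $K$ is compact.

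\textbf{Relative compactness.} Here I would run the argument from the proof of Lemma~\ref{lem: Rk rel comp} with a single particle and with the random empirical measure $\X_n(X_{\cdot\wedge s})$ replaced everywhere by the deterministic marginal flow $Q^X_s = Q\circ X^{-1}_{\cdot\wedge s}$ (which is in $\W^p(\Omega)$ by Lemma~\ref{lem: moment estimates}~(ii), and whose presence makes the Gronwall/Fubini steps only easier, since $\|Q^X_u\|_p$ is non-random). The $\m$-marginals of $\cC^0(K)$ are trivially tight because $\m$ is compact. For the $\Omega$-marginals: under Condition~\ref{cond: compact assumption} I would use the factorization identity $X = Sx + R_1\big(u\mapsto b(\xi_u,u,X,Q^X_u)\big) + \tfrac{\sin(\pi\alpha)}{\pi}R_\alpha Y$ together with the compactness of $R_1,R_\alpha$ and of $(S_t)_{t>0}$ and the moment bound of Lemma~\ref{lem: moment estimates}~(ii) to confine $X$ to a relatively compact subset of $\Omega$ with probability at least $1-C/\ell$; under Condition~\ref{cond: iii alternative} I would invoke the single-particle analogue of Lemma~\ref{lem: difference moment estimates} (whose proof carries over verbatim, the averaging over particles being superfluous) and Kolmogorov's tightness criterion. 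Either way $\cC^0(K)$ is tight in $\fP(\Theta)$, and since $p>\o$ the uniform bound $\sup_{Q\in\cC^0(K)}E^Q[\|X\|_T^p]<\infty$ from Lemma~\ref{lem: moment estimates}~(ii) upgrades tightness to relative compactness in $\W^\o(\Theta)$ via uniform integrability of the $\o$-th moments (cf.\ \cite[Corollary~B.2]{LakSPA15}).

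\textbf{Closedness.} Let $Q_m\in\cC^0(x_m)$ with $x_m\in K$ and $Q_m\to Q$ in $\W^\o(\Theta)$; passing to a subsequence, $x_m\to x\in K$ by compactness of $K$. I would then check the characterization from Lemma~\ref{lem: mg chara}~(ii) for $Q$ with initial value $x$. Part (a), $Q(X_0=x)=1$, is immediate from $x_m\to x$, $Q_m(X_0=x_m)=1$ and weak convergence; part (b), $Q\circ X^{-1}\in\W^p(\Omega)$, follows from the uniform $p$-moment bound together with the lower semicontinuity of $Q\mapsto E^Q[\|X\|_T^p]$ under weak convergence. For part (c), fix $y^*\in\mathcal{D}(A^*)$, $g\in\mathcal{C}^2_c$, rationals $s<t$ in $[0,T]$ and $\mathfrak{t}\in\mathcal{T}_s$, and recall that $E^{Q_m}[(\mathsf{M}^{g,y^*}_t-\mathsf{M}^{g,y^*}_s)\mathfrak{t}]=0$ for every $m$, where $\mathsf{M}^{g,y^*}$ is built from the flow $(Q_m)^X_\cdot$. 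Since $(Q_m)^X_u\to Q^X_u$ in $\W^\o(\Omega)$ for each $u$ and $b,\sigma$ are continuous by Condition~\ref{cond: main1}~(i), the integrand $\mathcal{L}_{g,y^*}(f,u,\omega,(Q_m)^X_u)$ converges pointwise to $\mathcal{L}_{g,y^*}(f,u,\omega,Q^X_u)$; as $g',g''$ are bounded, this integrand grows at most quadratically in $\|\omega\|_u$ and $\|(Q_m)^X_u\|_p$, hence is uniformly $Q_m$-integrable thanks to $p>2$ (which holds since $\alpha<\tfrac12$) and the $p$-moment bound. A generalized dominated convergence argument then lets me pass to the limit, giving $E^{Q}[(\mathsf{M}^{g,y^*}_t-\mathsf{M}^{g,y^*}_s)\mathfrak{t}]=0$ with the flow $Q^X_\cdot$; a density argument over $\mathcal{D}(A^*),\mathcal{C}^2_c,\mathcal{T}_s$ and rational $s<t$ then yields Lemma~\ref{lem: mg chara}~(ii)~(c). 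Hence $Q\in\cC^0(x)\subset\cC^0(K)$.

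\textbf{Main obstacle.} The delicate point is part (c) of the closedness step: the test martingale $\mathsf{M}^{g,y^*}$ depends on the measure itself through its marginal flow, so one cannot merely invoke weak convergence of $Q_m$ but must combine the $\W^\o$-convergence of the flows $(Q_m)^X_\cdot$, the continuity of $b$ and $\sigma$, and the uniform integrability coming from $p>2$ into a single limit passage. A secondary but indispensable point is that the upgrade from $\fP(\Theta)$-tightness to relative compactness in $\W^\o(\Theta)$ genuinely uses the strict inequality $p>\o$.
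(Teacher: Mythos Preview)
Your proposal is correct and follows essentially the same route as the paper: relative compactness is borrowed verbatim from the proof of Lemma~\ref{lem: Rk rel comp}, and closedness is checked via the martingale characterization of Lemma~\ref{lem: mg chara}. For the delicate limit passage in part~(c), where you invoke a ``generalized dominated convergence argument'', the paper makes precisely this step explicit by proving joint continuity of \((\omega,m,\mu)\mapsto \mathsf{M}_r(\omega,m,\mu)\) on \(\Theta\times\W^\o(\Theta)\) (via Berge's maximum theorem) and then applying Skorokhod's coupling together with Vitali's theorem, which is exactly what is needed to handle the simultaneous change of the integrating measure \(Q_m\) and of the flow \((Q_m)^X\) appearing inside the integrand.
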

\begin{proof}
Relative compactness of \(\cC^0(K)\) follows from similar arguments as used in the proof of Lemma \ref{lem: Rk rel comp}. 
	We only detail the proof for the closedness of \(\cC^0 (K)\). 
	Take a sequence \((Q^n)_{n = 0}^\infty\subset \cC^0(K)\) such that \(Q^n \to Q^0\) in \(\W^\o (\Theta)\). In the following we show that \(Q^0\) satisfies the properties (iii.a) -- (iii.c) from Lemma \ref{lem: mg chara}.

 \smallskip
	As \(K\) is compact in \(\W^q(H)\) and \(Q \mapsto Q \circ X^{-1}_0\) is continuous from \(\W^q(\Theta)\) into \(\W^q(H)\) (by the continuity of \(\omega \mapsto \omega (0)\) from \(\Omega\) into \(H\) and \cite[Proposition~A.1]{LakSPA15}), there exists a measure \(\nu_0 \in K\) such that \(Q^0 \circ X_0^{-1} = \nu_0\). 
	 Hence, (iii.a) from Lemma \ref{lem: mg chara} holds with initial distribution \(\nu_0\).

 \smallskip
	Next, it follows from Fatou's lemma (and Skorokhod's coupling theorem) that 
	\begin{align} \label{eq: p moment bound limit}
		E^{Q^0} \big[ \|X\|_T^p \big] \leq \liminf_{n \to \infty} E^{Q^n} \big[ \|X\|_T^p \big] 
		\leq \sup \Big\{ E^Q \big[ \|X\|_T^p \big] \colon Q \in \cC^0 (\nu), \nu \in K \Big\}.
	\end{align}
The final term is finite by Lemma \ref{lem: moment estimates}. Thus, \(Q^0 \circ X^{-1} \in \W^p (\Omega)\), which means that part (iii.b) from Lemma \ref{lem: mg chara} holds.
	
	\smallskip 
	Finally, we show that (iii.c) holds. Take \(g \in C^2_c(\bR; \bR)\) and \(y^* \in D(A^*)\).
	For \((r, \omega, m, \mu) \in [0, T] \times \Theta \times \W^\o (\Theta)\), define 
	\begin{align} \label{eq: Mr}
	\oM_r (\omega, m, \mu) := g (\langle \omega (r), y^*\rangle_H) - \int_0^r \int \mathcal{L}_{g, y^*} (f, u, \omega, \mu^X_u) \, m (du, df).
	\end{align}
\begin{lemma} \label{lem: Mr cont}
	Suppose that Condition~\ref{cond: main1}~\textup{(i)} holds. Then, \(\oM_r \colon \Theta \times \fP^q (\Theta) \to \bR\) is continuous for every \(r \in [0, T]\).
\end{lemma}
\begin{proof}
Take \(r \in [0, T]\) and a sequence \((\omega^n, m^n, \mu^n)_{n = 0}^\infty \subset \Theta \times \fP^q (\Theta)\) with \[(\omega^n, m^n, \mu^n) \to (\omega^0, m^0, \mu^0)\] in \(\Theta \times \fP^q (\Theta)\). Notice that 
	\begin{align*}
		\big|\oM_r (\omega^n, m^n, \mu^n) &- \oM_r (\omega^0, m^0, \mu^0)\big|
		\\&\leq |g (\langle \omega^n(r), y^*\rangle_H) - g (\langle \omega^0 (r), y^*\rangle_H)| 
			\\&\hspace{1.5cm}+ \sup_{f \in F, u \in [0, r]} \big| \mathcal{L}_{g, y^*} (f, u, \omega^n, \mu^{n, X}_u) - \mathcal{L}_{g, y^*} (f, u, \omega^0, \mu^{0, X}_u)\big| \phantom \int
		\\&\hspace{1.5cm} + \Big| \int_0^r \int \mathcal{L}_{g, y^*} (f, u, \omega^0, \mu^{0, X}_u) \, (m^n - m^0) (du, df) \Big|
		\\&=: I_n + II_n + III_n.
	\end{align*}
 Further, notice that \((u, \mu) \mapsto \mu^{X}_u\) is continuous from \([0, T] \times \fP^q (\Theta)\) into \(\fP^q (\Omega)\). This follows, for instance, from \cite[Proposition~A.1]{LakSPA15}.
That \(I_n \to 0\) is obvious and \(III_n \to 0\) follows from Condition~\ref{cond: main1}~(i) and the fact that \(\m\) is endowed with the weak topology. 
Further, \(II_n \to 0\) follows from Condition~\ref{cond: main1} (i) and Berge's maximum theorem (\cite[Theorem~17.31]{charalambos2013infinite}). 
	\end{proof}
		We set			
	\[
	\mathsf{M}^{n} (X, M) := \oM (X, M, Q^n) = g (\langle X, y^*\rangle_H) - \int_0^\cdot \int \mathcal{L}_{g, y^*} (f, s, X, Q^n \circ X_{\cdot \wedge s}^{-1}) \, M (ds, df).
	\]
	The following lemma provides the main step of the proof.
	\begin{lemma} \label{lem: conv}
		For every \(t \in [0, T]\) and any bounded continuous function \(\psi \colon \Theta \to \bR\),
		\[
		E^{Q^n} \big[ \mathsf{M}^n_t \psi \big] \to E^{Q^0} \big[ \mathsf{M}^0_t \psi \big].
		\]
	\end{lemma}
\begin{proof}
	By Skorokhod's coupling theorem, on some probability space \((\Sigma, \mathcal{G}, P)\), there are \(\Theta\)-valued random variables \((X^0, M^0),\) \((X^1, M^1),\) \(\dots\) with laws \(Q^0, Q^1, \dots\) such that \(P\)-a.s. \((X^n, M^n) \to (X^0, M^0)\). 
	By Lemma~\ref{lem: Mr cont}, \(P\)-a.s. 
	\[\oM_t (X^n, M^n, Q^n) \psi (X^n, M^n) \to \oM_t (X^0, M^0, Q^0) \psi (X^0, M^0).\]
Using Condition \ref{cond: main1} (ii) and Lemma \ref{lem: moment estimates}, we observe that 
\begin{align} \label{eq: moment bound}
\sup_{n \in \mathbb{N}} E^P \Big[ \big | \mathsf{M}^n_t (X^n, M^n) \psi (X^n, M^n)\big|^{p / 2} \Big] \leq C \Big( 1 + \sup_{n \in \mathbb{N}} E^P \big[ \|X^n\|^p_T \big] \Big) < \infty.
\end{align}
Consequently, because \(p / 2 > 1\), Vitali's theorem yields the claim.
\end{proof}

Let \(0 \leq s < t \leq T\) and take \(\mathfrak{t} \in \mathcal{T}_s\). The Lemmata~\ref{lem: mg chara} and \ref{lem: conv} imply that 
\[
E^{Q^0} \big[ \big(\mathsf{M}^0_t - \mathsf{M}^0_s \big) \mathfrak{t} \big] = \lim_{n \to \infty} E^{Q^n} \big[ \big(\mathsf{M}^n_t - \mathsf{M}^n_s \big) \mathfrak{t} \big] = 0.
\]
We conclude that (iii.c) from Lemma \ref{lem: mg chara} holds. 

In summary, \(Q^0 \in \cC^0 (\nu^0) \subset \cC^0 (K)\). This implies that \(\cC^0 (K)\) is closed and therefore, the proof is complete.
\end{proof}

We record a final observation.
\begin{lemma} \label{lem: cCN compact}
Suppose that the Conditions \ref{cond: main1} \textup{(i) -- (ii)} and one of the Conditions~\ref{cond: compact assumption} and \ref{cond: iii alternative} hold.
	For every \(\nu \in \W^p (H)\) and \(n \in \mathbb{N}\), the sets \(\cC^n (\nu)\) and \(\cR^n (\nu)\) are nonempty and compact in \(\W^\o (\Theta^n)\) and \(\W^\o (\W^\o (\Theta))\), respectively.
\end{lemma}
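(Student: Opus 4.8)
The plan is to settle everything for $\cC^n(x)$ in $\W^\o(\Theta^n)$ first and then transfer to $\cR^n(x)$ by pushing forward along the empirical-measure map $\Y_n\colon\Theta^n\to\fP(\Theta)$. Since $\Y_n$ is continuous and $\int\mathsf{d}(\theta,\theta_0)^\o\,\Y_n(\theta^1,\dots,\theta^n)(d\theta)=\tfrac{1}{n}\sum_{k}\mathsf{d}(\theta^k,\theta_0)^\o$, the pushforward $Q\mapsto Q\circ\Y_n^{-1}$ is continuous from $\W^\o(\Theta^n)$ into $\W^\o(\W^\o(\Theta))$; hence $\cR^n(x)=\{Q\circ\Y_n^{-1}:Q\in\cC^n(x)\}$ is a continuous image of $\cC^n(x)$, and once $\cC^n(x)$ is shown to be nonempty and compact in $\W^\o(\Theta^n)$, the corresponding statement for $\cR^n(x)$ in $\W^\o(\W^\o(\Theta))$ is immediate.

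For nonemptiness of $\cC^n(x)$ I would fix an arbitrary $f_0\in F$ and use the constant controls $\xi^k\equiv f_0$. With this choice the $n$-particle system collapses to a single path-dependent, uncontrolled mild SPDE on the separable Hilbert space $H^n$, driven by $\operatorname{diag}(A,\dots,A)$ and with coefficients $(\omega^1,\dots,\omega^n)\mapsto\bigl(b(f_0,t,\omega^k,\X_n(\omega_{\cdot\wedge t}))\bigr)_{k=1}^n$ and likewise for $\sigma$, which are continuous by Condition~\ref{cond: main1}~(i) and of linear growth by Condition~\ref{cond: main1}~(ii) (using $\|\X_n(\omega_{\cdot\wedge t})\|_p\le(\tfrac{1}{n}\sum_k\|\omega^k\|_t^p)^{1/p}$). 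The diagonal generator inherits Condition~\ref{cond: compact assumption} (a finite product of compact operators is compact) respectively Condition~\ref{cond: iii alternative} (take the product Riesz basis, keeping the $\lambda_k$ and the $\cCk$), so the general existence result of the appendix produces a measure $P\in\fP(\Omega^n)$ under which the coordinate process solves this SPDE. Extending $P$ to $\Theta^n$ by $M^k(ds,df):=\delta_{f_0}(df)\,ds$, one checks Definition~\ref{def: Cn}~(i)--(ii) directly with $\xi^k_s\equiv f_0$: part~(ii) is exactly the SPDE solved by $P$, and both sides of the identity in~(i) equal $\int_0^\cdot\mathcal{L}^k_{g,y^1,\dots,y^n}(f_0,s,X,\X_n(X_{\cdot\wedge s}))\,ds$. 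Thus this measure lies in $\cC^n(x)$, and it belongs to $\W^\o(\Theta^n)$ by Lemma~\ref{lem: moment estimates} together with $p>\o$ and boundedness of $\m$.

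For compactness of $\cC^n(x)$ in $\W^\o(\Theta^n)$, relative compactness follows by the arguments of Lemma~\ref{lem: Rk rel comp} applied coordinatewise: each marginal $Q\circ(X^k,M^k)^{-1}$ is tight on $\Theta=\Omega\times\m$ --- using the factorization identity to place $X^k$ in the relatively compact sets $K_\ell$ under Condition~\ref{cond: compact assumption}, respectively Lemma~\ref{lem: difference moment estimates} and Kolmogorov's criterion \cite[Theorem~23.7]{Kallenberg} under Condition~\ref{cond: iii alternative}, together with compactness of $\m$ --- hence the joint law on $\Theta^n$ is tight, and the uniform $p$-moment bound of Lemma~\ref{lem: moment estimates} with $p>\o$ upgrades this to relative compactness in $\W^\o(\Theta^n)$ (cf.\ \cite[Corollary~B.2]{LakSPA15}). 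Closedness I would obtain along the lines of the proof of Lemma~\ref{lem: C comp}, using the $n$-particle martingale characterization of Lemma~\ref{lem: mg chara N} in place of Lemma~\ref{lem: mg chara}: given $Q^m\to Q^0$ with $Q^m\in\cC^n(x)$, the initial condition and the $p$-moment bound for $Q^0$ are immediate (Fatou plus Lemma~\ref{lem: moment estimates}), and the martingale property in the limit follows from $n$-particle analogues of Lemmata~\ref{lem: Mr cont} and~\ref{lem: conv}. Here the interaction enters only through the \emph{explicit} continuous path functional $\omega\mapsto\X_n(\omega_{\cdot\wedge u})$ on $\Omega^n$, so --- in contrast to the McKean--Vlasov case --- no dependence on the limiting law needs to be carried along; joint continuity of the relevant compensated functional follows from Condition~\ref{cond: main1}~(i), Berge's maximum theorem \cite[Theorem~17.31]{charalambos2013infinite}, and the weak topology on $\m$, while the passage to the limit uses Skorokhod coupling, the uniform $p/2$-integrability coming from Condition~\ref{cond: main1}~(ii) and Lemma~\ref{lem: moment estimates}, and Vitali's theorem, tested against a countable determining class for $\mathcal{O}^n_s$.

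I expect the main obstacle to be the closedness step, i.e.\ setting up the continuity and convergence lemmas for the $n$-particle compensated functionals carefully enough that the argument of Lemma~\ref{lem: C comp} goes through. This is essentially bookkeeping rather than a new idea, since all the analytic input --- the moment estimates, the tightness machinery, the martingale characterization --- is already in place; the only genuinely new point is the (favorable) observation that for a fixed number of particles the empirical interaction is an explicit continuous path functional, which actually makes this step easier than its $\cC^0$-counterpart.
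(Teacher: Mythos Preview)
Your proposal is correct and follows essentially the same route as the paper: nonemptiness via the appendix existence theorem (with constant controls), relative compactness by the arguments of Lemma~\ref{lem: Rk rel comp}, closedness by a martingale problem argument in the spirit of Lemma~\ref{lem: C comp} using the characterization of Lemma~\ref{lem: mg chara N}, and transfer to $\cR^n(x)$ by continuity of the pushforward $Q\mapsto Q\circ\Y_n^{-1}$. The paper's proof is in fact just a sketch that points to exactly these ingredients and omits the details you have filled in; your observation that for fixed $n$ the empirical interaction $\omega\mapsto\X_n(\omega_{\cdot\wedge u})$ is an explicit continuous path functional (so no dependence on the limiting law needs to be tracked) is precisely the simplification that makes the closedness step routine here.
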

\begin{proof}
That \(\cC^n (\nu)\) is nonempty follows from Theorem~\ref{theo: existence appendix} in the appendix.
Similar to the proof of Lemma~\ref{lem: Rk rel comp}, one proves that the set \(\cC^n (\nu)\) is relatively compact in \(\W^\o (\Theta^n)\). Further, a martingale problem argument (in the spirit of those presented in Lemma~\ref{lem: C comp} for the set \(\cC^0 (\nu)\)) shows that \(\cC^n (\nu)\) is closed in \(\W^\o (\Theta^n)\). We omit the details for brevity. In summary, \(\cC^n (\nu)\) is nonempty and compact. These claims transfer directly to \(\cR^n (\nu)\) by the continuity of \(P \mapsto P \circ \Y_n^{-1}\) from \(\W^\o (\Theta^n)\) into \(\W^\o (\W^\o (\Theta))\), cf. \cite[Proposition A.1]{LakSPA15}.
\end{proof}

	\subsection{Proof of Theorem \ref{theo: main1} (i)}
	Using \cite[Proposition~A.1]{LakSPA15}, we obtain continuity of the maps \(\pi_n \colon \W^\o (\Theta^n) \to \W^\o (\Omega^n)\) and \(\Pi \colon \W^\o (\W^\o (\Theta)) \to \W^\o (\W^\o (\Omega))\) given through \(\pi_n (P) := P \circ (X^1, \dots, X^n)^{-1}\)  and \(\Pi (Q) := Q \circ \pi^{-1}_1\). 
	For \(n \in \mathbb{N}\) and \(\nu \in \W^p (H)\), Lemma \ref{lem: nonlinear SPDE rel RCR} yields that 
	\begin{align*}
		\Pi (\cR^n (\nu)) &= \{ \Pi (Q) \colon Q \in \cR^n (\nu) \}
		\\&= \{ \Pi (P \circ \Y_n^{-1}) \colon P \in \cC^n (\nu)\}
		\\&= \{ P \circ (X^1, \dots, X^n)^{-1} \circ \X_n^{-1} \colon P \in \cC^n(\nu) \}		
		\\&= \{ Q \circ \X_n^{-1} \colon Q \in \cA^n (\nu) \}
		\\&= \mathcal{U}^n (\nu).
	\end{align*} 
As, by Lemma \ref{lem: cCN compact}, \(\cR^n (\nu)\) is nonempty and compact in \(\W^\o(\W^\o (\Theta))\), it follows from the continuity of \(\Pi\) that \(\mathcal{U}^n (\nu)\) is nonempty and compact in \(\W^\o (\W^\o(\Omega))\).
	Similarly, as 
	\[
	\cA^0 (\nu) = \pi_1 ( \cC^0 (\nu)), \qquad \cA^n (\nu) = \pi_n (\cC^n (\nu)),
	\]
	by Lemma \ref{lem: nonlinear SPDE rel RCR}, it follows that the sets \(\cA^0 (\nu)\) and \(\cA^n (\nu)\) are compact by Lemmata~\ref{lem: C comp} and \ref{lem: cCN compact}. 
	Further, \(\cA^n (\nu)\) is nonempty. Anticipating the following section, the claim \(\cA^0 (\nu) \not = \emptyset\) follows from Theorem~\ref{theo: main1}~(ii). 
	Finally, because of the compactness of \(\cA^0 (\nu)\) in \(\W^\o (\Omega)\), the set \(\mathcal{U}^0 (\nu) = \{Q \colon Q (\cA^0 (\nu)) = 1\}\) is compact in \(\fP (\W^\o (\Omega))\) (by \cite[Theorem 15.11]{charalambos2013infinite}) and \(\W^\o (\W^\o (\Omega))\), as these spaces induce the same topology on \(\mathcal{U}^0 (\nu)\). This completes the proof. 
\qed 

\subsection{Proof of Theorem \ref{theo: main1} (ii)}
Below, we prove a version of Theorem \ref{theo: main1} (ii) for the sets \(\cR^n\) and \(\cR^0\) instead of \(\mathcal{U}^n\) and \(\mathcal{U}^0\). The claim of Theorem \ref{theo: main1} (ii) will then follow through projection as in the proof of Theorem \ref{theo: main1} (i). 
The main observation in this section is the following:
\begin{proposition} \label{prop: version of (ii)}
	Suppose that the Conditions \ref{cond: main1} \textup{(i) -- (ii)} and one of the Conditions~\ref{cond: compact assumption} and \ref{cond: iii alternative} hold. Let \((\nu^n)_{n = 0}^\infty \subset \W^p (H)\) be a bounded (in the sense of Lemma~\ref{lem: moment estimates}) sequence such that \(\nu^n \to \nu^0\) in \(\W^q (H)\).
	Every sequence \((Q^n)_{n = 1}^\infty\) with \(Q^n \in \cR^n (\nu^n)\) is relatively compact in \(\W^\o (\W^\o (\Theta))\) and any of its \(\o\)-Wasserstein accumulation points is in \(\cR^0 (\nu^0)\). 
\end{proposition}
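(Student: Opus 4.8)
The plan is to read off relative compactness from Lemma~\ref{lem: Rk rel comp} and to identify the accumulation points by verifying the martingale--problem characterisation of $\cC^0(x^0)$ from Lemma~\ref{lem: mg chara}. Since $x^n \to x^0$, the set $K := \{x^n : n \ge 0\}$ is compact in $H$, so $(Q^n)_{n \ge 1} \subset \cR(K)$, which is relatively compact in $\W^\o(\W^\o(\Theta))$ by Lemma~\ref{lem: Rk rel comp}; this gives the first assertion. For the second, let $Q^0$ be an arbitrary $\o$-Wasserstein accumulation point and pass to a subsequence with $Q^n \to Q^0$ in $\W^\o(\W^\o(\Theta))$; write $Q^n = P^n \circ \Y_n^{-1}$ with $P^n \in \cC^n(x^n)$. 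It then remains to prove $Q^0(\cC^0(x^0)) = 1$, and by Lemma~\ref{lem: mg chara} (equivalence of (i) and (iii)) it suffices to show that $Q^0$-a.e.\ $\mu \in \W^\o(\Theta)$ satisfies conditions (iii.a)--(iii.c) there.

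Conditions (iii.a) and (iii.b) are handled routinely. For (iii.a), I would test against the bounded continuous map $\mu \mapsto \int (\|\omega(0) - x^0\|_H \wedge 1)\,\mu(d\omega, dm)$, whose $Q^n$-expectation equals $\|x^n - x^0\|_H \wedge 1 \to 0$, forcing $\mu(X_0 = x^0) = 1$ for $Q^0$-a.e.\ $\mu$. For (iii.b), the functional $\mu \mapsto \int \|\omega\|_T^p\, \mu(d\omega, dm)$ is lower semicontinuous on $\W^\o(\Theta)$, so, using \eqref{eq: moment estimate lemma i} from Lemma~\ref{lem: moment estimates} with the bounded set $K$,
\[
E^{Q^0}\Big[\int \|\omega\|_T^p\, \mu(d\omega, dm)\Big] \le \liminf_{n \to \infty} \frac{1}{n}\sum_{k=1}^n E^{P^n}\big[\|X^k\|_T^p\big] < \infty,
\]
whence $\mu \circ X^{-1} \in \W^p(\Omega)$ for $Q^0$-a.e.\ $\mu$.

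The core is (iii.c). Fix $g \in \mathcal{C}^2_c$, $y^* \in \mathcal{D}(A^*)$, rationals $0 \le s < t \le T$ and $\mathfrak{t} \in \mathcal{T}_s$, and, with $\oM_r$ as in \eqref{eq: Mr}, put $\Phi(\mu) := \int_\Theta (\oM_t(\omega, m, \mu) - \oM_s(\omega, m, \mu))\,\mathfrak{t}(\omega, m)\, \mu(d\omega, dm)$, so that (iii.c) amounts to $\Phi(\mu) = 0$. Applying Lemma~\ref{lem: mg chara N} to $P^n$ with $y^k = y^*$ and $y^j = 0$ for $j \ne k$, and using $\X_n(X_{\cdot \wedge s}) = \Y_n(X, M)^X_s$, shows that for each $k$ the process $N^k_r := \oM_r(X^k, M^k, \Y_n(X, M))$ is a square-integrable $P^n$-$\mathbf{O}^n$-martingale; since $\mathfrak{t}(X^k, M^k)$ is $\mathcal{O}^n_s$-measurable, averaging the martingale property over $k$ gives $E^{Q^n}[\Phi] = E^{P^n}[\Phi(\Y_n(X, M))] = 0$. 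The decisive estimate is on the second moment: writing $\zeta^n := \Phi(\Y_n(X, M)) = \tfrac{1}{n}\sum_k \mathfrak{t}(X^k, M^k)(N^k_t - N^k_s)$, the process $Z_r := \tfrac{1}{n}\sum_k \mathfrak{t}(X^k, M^k)(N^k_{r \vee s} - N^k_s)$, $r \in [s, t]$, is a square-integrable $P^n$-martingale started at $0$ with $Z_t = \zeta^n$, and because the martingale parts of $N^1, \dots, N^n$ are driven by the \emph{independent} cylindrical Brownian motions $W^1, \dots, W^n$, the cross brackets $[N^k, N^l]$ vanish for $k \ne l$; hence by the It\^o isometry
\[
E^{P^n}\big[(\zeta^n)^2\big] = E^{P^n}\big[[Z]_t\big] = \frac{1}{n^2}\sum_{k=1}^n E^{P^n}\big[\mathfrak{t}(X^k, M^k)^2\big([N^k]_t - [N^k]_s\big)\big].
\]
Bounding $[N^k]_t - [N^k]_s = \int_s^t g'(\langle X^k_u, y^*\rangle_H)^2\,\|\sigma^*(\xi^k_u, u, X^k, \X_n(X_{\cdot \wedge u}))y^*\|_U^2\, du \le C(1 + \|X^k\|_T^2 + (\tfrac{1}{n}\sum_i \|X^i\|_T^p)^{2/p})$ via the linear growth in Condition~\ref{cond: main1}~(ii), and then invoking Lemma~\ref{lem: moment estimates} together with Jensen's inequality (recall $2/p \le 1$), one obtains $E^{Q^n}[\Phi^2] = E^{P^n}[(\zeta^n)^2] \le C'/n \to 0$.

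Finally, these $L^2$-bounds must be turned into the pointwise statement $\Phi(\mu) = 0$ for $Q^0$-a.e.\ $\mu$, simultaneously over the countable index set of admissible $(g, y^*, s, t, \mathfrak{t})$. Enumerating the corresponding functionals as $(\Phi_i)_{i \ge 1}$ and setting $\Lambda := \sum_i 2^{-i}(1 \wedge |\Phi_i|) \colon \W^\o(\Theta) \to [0, 1]$, one checks that each $\Phi_i$ is continuous---this is where Lemma~\ref{lem: Mr cont}, the uniform $p$-th moment bound \eqref{eq: 2nd part tightness}, and a de la Vall\'ee-Poussin/Vitali uniform-integrability argument of the type used in Lemmata~\ref{lem: conv} and \ref{lem: C comp} enter, and it is convenient first to upgrade the convergence $Q^n \to Q^0$ from the $\o$- to the $q'$-Wasserstein topology for some $q' \in (2, p)$, which is legitimate because of the uniform $p$-th moment bound and $p > 2$. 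Hence $\Lambda$ is bounded and continuous, so $E^{Q^0}[\Lambda] = \lim_n E^{Q^n}[\Lambda]$; since $0 \le E^{Q^n}[1 \wedge |\Phi_i|] \le (E^{Q^n}[\Phi_i^2])^{1/2} \to 0$ for each $i$ while $E^{Q^n}[1 \wedge |\Phi_i|] \le 1$, dominated convergence for the series yields $\lim_n E^{Q^n}[\Lambda] = 0$, hence $\Lambda = 0$ $Q^0$-a.s.\ and thus $\Phi_i(\mu) = 0$ $Q^0$-a.s.\ for every $i$. Together with (iii.a)--(iii.b) and Lemma~\ref{lem: mg chara} this gives $Q^0(\cC^0(x^0)) = 1$, i.e.\ $Q^0 \in \cR^0(x^0)$. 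I expect the main obstacle to be the propagation-of-chaos estimate of the previous paragraph---identifying the right empirical-average martingale $Z$, using independence of the driving noises to annihilate the cross-variations, and extracting the $O(1/n)$ rate from only a $p$-th moment bound; by comparison the tightness and continuity bookkeeping is routine, though it has to be carried out carefully since the coefficients have merely $p$-linear growth while the ambient topology is the weaker $\o$-Wasserstein one.
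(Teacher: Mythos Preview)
Your overall strategy coincides with the paper's: relative compactness from Lemma~\ref{lem: Rk rel comp}, identification of the limit via the countable martingale--problem criterion of Lemma~\ref{lem: mg chara}~(iii), and the key \(O(1/n)\) second--moment estimate obtained from the orthogonality \([N^k,N^l]=0\) for \(k\ne l\). Your handling of (iii.a), (iii.b) and the variance computation for (iii.c) is correct and essentially identical to the paper's.

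The gap is in the final passage to the limit. You assert that each \(\Phi_i\) is continuous on \(\W^{q'}(\Theta)\) for some \(q'\in(2,p)\), so that \(\Lambda=\sum_i 2^{-i}(1\wedge|\Phi_i|)\) is bounded continuous and \(E^{Q^0}[\Lambda]=\lim_n E^{Q^n}[\Lambda]\) follows. But the integrand in \(\Phi(\mu)\) is only dominated by \(C(1+\|\omega\|_T^2+\|\mu^X\|_p^2)\): Condition~\ref{cond: main1}~(ii) controls the coefficients through \(\|\mu\|_p\), not through \(\|\mu\|_{q'}\), and \(\mu\mapsto\|\mu^X\|_p\) is neither finite nor continuous on \(\W^{q'}(\Theta)\). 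Hence \(\Phi\) need not be continuous there. The uniform \(p\)-th moment bound~\eqref{eq: 2nd part tightness} is a statement about the \emph{specific} sequence \((Q^n)\) and cannot be converted into a global regularity property of \(\Phi\); a Skorokhod coupling \(\nu_n\to\nu_0\) of \((Q^n)\) gives no almost sure control of \(\sup_n\|\nu_n^X\|_p\).

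The paper closes this gap by truncating the \emph{integrand} rather than the functional: with \(\oZ^k(\mu):=\int[k\wedge(\oM_t-\oM_s)\vee(-k)]\,\mathfrak{t}\,d\mu\), the map \(\oZ^k\) is bounded and continuous on \(\W^\o(\Theta)\) by Lemma~\ref{lem: Mr cont}, so \(E^{Q^n}[|\oZ^k|]\to E^{Q^0}[|\oZ^k|]\) for each fixed \(k\). The truncation error \(|E^{Q^n}[|\oZ|]-E^{Q^n}[|\oZ^k|]|\) is then bounded by \(Ck^{1-p/2}\) \emph{uniformly in \(n\)} via the moment bound of Lemma~\ref{lem: moment estimates}, and similarly for \(Q^0\). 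Sending first \(n\to\infty\) and then \(k\to\infty\) yields \(E^{Q^0}[|\oZ|]=\lim_n E^{Q^n}[|\oZ|]=0\). This is precisely the ``Vitali--type'' argument you invoke, but it must be run at the level of the \(Q^n\)-expectations (exploiting the sequence--specific moment bound) rather than repackaged as continuity of \(\Phi\).
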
 

Before we prove this proposition, let us deduce Theorem \ref{theo: main1} (ii).

\begin{proof}[Proof of Theorem \ref{theo: main1} \textup{(ii)}]
	Let \(\pi_1 \equiv \pi\) and \(\Pi\) be as in the proof of Theorem \ref{theo: main1} (i) and recall that \(\cA^0 (\nu^0) = \pi (\cC^0 (\nu^0))\) and \(\mathcal{U}^n (\nu^n) = \Pi (\cR^n (\nu^n))\). Furthermore, using that \(\cC^0 (\nu^0) \subset \pi^{-1} (\pi (\cC^0 (\nu^0))) = \pi^{-1} (\cA^0 (\nu^0))\), we also observe that 
	\begin{align*}
		\Pi (\cR^0 (\nu^0)) &= \{ \Pi (Q) \colon Q (\cC^0 (\nu^0)) = 1 \} 
		\\&\subset \{ \Pi (Q) \colon Q \circ \pi^{-1} (\cA^0 (\nu^0)) = 1\} 
		\\&= \{ \Pi (Q) \colon \Pi (Q) (\cA^0 (\nu^0)) = 1\} 
		\\&= \mathcal{U}^0 (\nu^0).
	\end{align*}	
As \(\bigcup_{n = 1}^\infty \cR^n (\nu^n)\) is relatively compact in \(\W^\o (\W^\o (\Theta))\) by Lemma \ref{lem: Rk rel comp}, the set
\[
\bigcup_{n = 1}^\infty \ocA^n (\nu^n) = \Pi \Big( \bigcup_{n = 1}^\infty \cR^n (\nu^n) \Big)
\]
is relatively compact in \(\W^\o (\W^\o (\Omega))\) thanks to the continuity of \(\Pi\).
Hence, the sequence \((Q^n)_{n = 1}^\infty\) from Theorem \ref{theo: main1} (ii) is relatively compact in \(\W^\o (\W^\o (\Omega))\). Let \(P^n \in \cR^n (\nu^n)\) be such that \(\Pi (P^n) = Q^n\). By Proposition \ref{prop: version of (ii)}, any subsequence of \((P^n)_{n = 1}^\infty\) has a further subsequence \((P^{N_n})_{n = 1}^\infty\) that converges in \(\W^\o (\W^\o (\Theta))\) to a measure \(P^0\in\cR^0 (\nu^0)\). Now, \(Q^{N_n} = \Pi (P^{N_n})\) converges in \(\W^\o (\W^\o (\Omega))\) to the measure \(\Pi (P^0) \in \Pi (\cR^0(\nu^0)) \subset \ocA^0 (\nu^0)\). 
The proof of Theorem~\ref{theo: main1}~(ii) is complete.
\end{proof}

It is left to prove Proposition~\ref{prop: version of (ii)}.

\begin{proof}[Proof of Proposition~\ref{prop: version of (ii)}]
	Take a sequence \((\nu^n)_{n = 0}^\infty \subset \W^p (H)\) that is bounded (in the sense of Lemma~\ref{lem: moment estimates}) such that \(\nu^n \to \nu^0\) in \(\W^q (H)\) and let \((Q^n)_{n = 1}^\infty\) be such that \(Q^n \in \cR^n (\nu^n)\). By Lemma \ref{lem: Rk rel comp}, the set \[\bigcup_{n = 1}^\infty \cR^n (\nu^n) \subset \bigcup_{n, m = 1}^\infty \cR^n (\nu^m) = \cR ( \{\nu^m \colon m \in \mathbb{N}\} )\] is relatively compact in \(\W^\o (\W^\o (\Theta))\). Consequently, the sequence \((Q^n)_{n = 1}^\infty\) is relatively compact in \(\W^\o (\W^\o (\Theta))\). It remains to show that every of its \(q\)-Wasserstein accumulation point is in \(\cR^0 (\nu^0)\). To keep our notation simple, we assume that \(Q^n \to Q^0\) in \(\W^\o (\W^\o (\Theta))\). We 
	now use Lemma \ref{lem: mg chara} to
	prove that \(Q^0 \in \cR^0 (\nu^0)\). 
	
Using the i.i.d. assumption of the initial values in the definition of \(\cR^n (\nu^n)\) and the assumption \(\nu^n \to \nu^0\) in \(\W^q (H)\), hence also in \(\W (H)\), it follows as in the proof of \cite[Proposition~2.2~i)]{SnzPoC} that 
\begin{align*}
	Q^0 ( \{ Q \in \W(\Theta) \colon Q \circ X^{-1}_0 = \nu^0 \} ) = 1, 
\end{align*}
i.e., almost all realizations of \(Q^0\) satisfy part (iii.a) from Lemma \ref{lem: mg chara} with initial distribution~\(\nu^0\).
	
	We now turn to the proof of part (iii.b) from Lemma~\ref{lem: mg chara}.
	By Fatou's lemma for weak convergence (see \cite[Theorem~2.4]{feinberg20} for a suitable version), we obtain 
	\begin{equation} \label{eq: Q0 in Wp}
		\begin{split}
			\int \|\mu^X\|^p_p \, Q^0 (d \mu) &\leq \int \liminf_{\nu \to \mu} \|\nu^X\|^p_p \, Q^0 (d \mu) 
			\\&\leq \liminf_{n \to \infty} \int \|\mu^X\|^p_p \, Q^n (d \mu) 
			\\&\leq \sup \Big\{ \frac{1}{n} \sum_{k = 1}^n E^P \big[ \|X^k\|^p_T \big] \colon P \in \cC^n (\nu^n), n \in \mathbb{N} \Big\}.
		\end{split}
	\end{equation}
	As the last term is finite by Lemma \ref{lem: moment estimates}, it follows that \(Q^0 \in \fP (\W^p (\Theta))\). In particular, almost all realizations of \(Q^0\) have the property (iii.b) from Lemma \ref{lem: mg chara}.
	
	Finally, we prove (iii.c) from Lemma \ref{lem: mg chara}.
	Take \(y^* \in \mathcal{D} (A^*), g \in \mathcal{C}^2_c, s, t \in \mathbb{Q}_+ \cap [0, T], s < t\) and \(\mathfrak{t} \in \mathcal{T}_s\). Recall \eqref{eq: Mr}, i.e., that, for \((r, \omega, m, \mu) \in [0, T] \times \Theta \times \W^\o (\Theta)\),
	\[
	\oM_r (\omega, m, \mu) = g (\langle \omega (r), y^*\rangle_H) - \int_0^r \int \mathcal{L}_{g, y^*} (f, u, \omega, \mu^X_u) \, m (du, df).
	\]
	For \(\mu \in \W^\o (\Theta)\), we define 
	\[
	\oZ^k (\mu) := \int \big[ k \wedge ( \oM_t (\omega, m, \mu) - \oM_s (\omega, m, \mu) ) \vee (- k) \big]  \mathfrak{t} (\omega, m) \,  \mu (d \omega, dm),
	\]
	and 
	\[
	\oZ (\mu) := \liminf_{k \to \infty} \oZ^k (\mu).
	\]
	By Lemma~\ref{lem: Mr cont}, \(\mathsf{M}_r \colon \Theta \times \fP^q (\Theta) \to \bR\) is continuous for every \(r \in [0, T]\). 
	Hence, thanks to \cite[Theorem~8.10.61]{bogachev}, the map \(\oZ^k \colon \fP^q (\Theta) \to \bR\) is continuous and consequently, \(\oZ\) is Borel measurable. 
	Thanks to Condition \ref{cond: main1} (ii), we have 
	\begin{align*}
	| \oM_t (\omega, m, \mu) - \oM_s (\omega, m, \mu) | &\leq C \big( 1 + \|\omega\|^2_T + \|\mu^X\|^2_p \big).
	\end{align*}
Hence, since 
\[
| k \wedge ( \oM_t (\omega, m, \mu) - \oM_s (\omega, m, \mu) ) \vee (- k) | \leq | \oM_t (\omega, m, \mu) - \oM_s (\omega, m, \mu) |,
\]
the dominated convergence theorem yields that 
\[
\mu \in \W^p (\Theta) \ \Longrightarrow \ \oZ (\mu) = \int (\oM_t (\omega, m, \mu) - \oM_s (\omega, m, \mu) ) \mathfrak{t} (\omega, m) \, \mu (d \omega, dm).
\]

We now prove that \(Q^0\)-a.s. \(\oZ = 0\). By Lemma \ref{lem: mg chara}, as \(\mathcal{D} (A^*), \mathcal{C}^2_c\) and \(\mathcal{T}_s\) are countable, this 
implies that almost all realizations of \(Q^0\) satisfy (iii.c) from Lemma \ref{lem: mg chara}. In summary, we then can conclude that \(Q^0 (\cC^0 (\nu^0)) = 1\), which means that \(Q^0 \in \cR^0 (\nu^0)\). 

The proof of \(Q^0\)-a.s. \(\oZ= 0\) uses a strategy we learned from \cite{bhatt1998interacting}, cf. also \cite{C23,C23c}. It is divided into two steps. First, we prove that 
\begin{align} \label{eq: first main oZ = 0}
	\lim_{n \to \infty} E^{Q^n} \big[ | \oZ | \big] = E^{Q^0} \big[ | \oZ | \big], 
\end{align} 
and afterwards, we show that 
\begin{align} \label{eq: second main oZ = 0}
	\lim_{n \to \infty} E^{Q^n} \big[ | \oZ |^2 \big] = 0.
\end{align} 
Obviously, \eqref{eq: first main oZ = 0} and \eqref{eq: second main oZ = 0} yield that \(E^{Q^0} [ | \oZ | ] = 0\), which proves \(Q^0\)-a.s. \(\oZ = 0\). 

We proceed with the proofs for \eqref{eq: first main oZ = 0} and \eqref{eq: second main oZ = 0}. By the triangle inequality, we observe that 
\begin{equation*} 
	\begin{split}
	| E^{Q^n} [ | \oZ | ] - E^{Q^0} [ | \oZ | ] | &\leq | E^{Q^n} [ | \oZ | ] - E^{Q^n} [ | \oZ^k | ] |  \\&\hspace{1cm}+ 	| E^{Q^n} [ | \oZ^k | ] - E^{Q^0} [ | \oZ^k | ] | 
	\\&\hspace{1cm}+ 	| E^{Q^0} [ | \oZ^k | ] - E^{Q^0} [ | \oZ | ] |
 \\&=: I_{n, k} + II_{n, k} + III_k.
	\end{split}
\end{equation*}
First, notice that \(II_{n, k} \to 0\) as \(n \to \infty\) for every \(k > 0\), as \(\oZ^k\) is bounded and continuous on \(\W^\o (\Theta)\). We now discuss \(I_{n, k}\) and \(III_k\). By definition of \(\cR^n (\nu^n)\), there is a measure \(P^n \in \cC^n (\nu^n)\) such that \(Q^n = P^n \circ \Y_n^{-1}\). 
We obtain 
\begin{align*}
 I_{n, k} &\leq \frac{C}{n} \sum_{j = 1}^n E^{P^n} \big[ | (\oM_t - \oM_s) - k \wedge ( \oM_t - \oM_s ) \vee (- k) | \circ (X^j, M^j, \X_n (X))\big]
	\\&\leq \frac{C}{n} \sum_{j = 1}^n E^{P^n} \big[ | \oM_t - \oM_s | \1_{\{| \oM_t - \oM_s|\, >\, k\}} \circ (X^j, M^j, \X_n (X))\big]
	\\&\leq \frac{C}{k^{p/2 - 1}} \frac{1}{n} \sum_{j= 1}^n E^{P^n} \big[ | \oM_t - \oM_s |^{p /2} \circ (X^j, M^j, \X_n (X))\big]
	\\&\leq \frac{C}{k^{p/2 - 1}} \frac{1}{n} \sum_{j = 1}^n E^{P^n} \big[ 1 + \|X^j\|^{p}_T + \|\X_n (X)\|^p_p \big]
	\\&= \frac{C}{k^{p/2 - 1}} \Big( 1 + \frac{1}{n} \sum_{j = 1}^n E^{P^n} \big[ \|X^j\|^{p}_T \big] \Big)
	\\&\leq \frac{C}{k^{p/2 - 1}},
\end{align*}
where the constant is independent of \(n\) by the moment estimate from Lemma \ref{lem: moment estimates}. Similarly, we obtain that 
\begin{align*}
	III_k &\leq \frac{C}{k^{p/2 - 1}} \iint | \oM_t (\theta, \mu) - \oM_s (\theta, \mu) |^{p/2} \, \mu (d \theta) \, Q^0 (d \mu) 
	\\&\leq \frac{C}{k^{p/2 - 1}} \Big( 1 +\int \|\mu^X\|^p_p \, Q^0 (d \mu) \Big).
\end{align*}
The last term is finite by \eqref{eq: Q0 in Wp}. In summary, \(I_{n, k} + III_k \to 0\) as \(k \to \infty\) uniformly in~\(n\). Together with our previous observation that \(II_{n, k} \to 0\) as \(n \to \infty\) for fixed \(k > 0\), we conclude that \eqref{eq: first main oZ = 0} holds. 

Finally, it remains to prove \eqref{eq: second main oZ = 0}. Notice that 
\begin{align*}
	E^{Q^n} \big[ \oZ^2 \big] = \frac{1}{n^2} \sum_{i, j = 1}^n E^{P^n} \big[ \oZ( \delta_{(X^i, M^i)} ) \oZ (\delta_{(X^j, M^j)}) \big].
\end{align*}
	Take \(1 \leq i < j \leq n\). By It\^o's formula, \(P^n\)-a.s.
\begin{align*}
\oK^i &:= \oM (X^i, M^i, \X_n (X)) - g (\langle X^i_0, y^* \rangle_H) 
\\&= \int_0^\cdot g' (\langle X^i_u, y^* \rangle_H) \langle \sigma^* (\xi^i_u, u, X^i, \X_n (X_{\cdot \wedge u})) y^*, d W^i_u \rangle_U, 
\end{align*}
where \(\xi^i\) and \(W^i\) are as in the definition of \(P^n\). By the independence of \(W^i\) and \(W^j\), we obtain that the quadratic variation of \(\oK^i\) and \(\oK^j\) vanishes. As \(\oK^i\) and \(\oK^j\) are square integrable \(P^n\)-\(\mathbf{O}\)-martingales (see Lemma \ref{lem: mg chara N}), this means that the product \(\oK^i \oK^j\) is a \(P^n\)-\(\mathbf{O}\)-martingale. Consequently, using that \(\oK^i, \oK^j\) and \(\oK^i \oK^j\) are \(P^n\)-\(\mathbf{O}\)-martingales, we obtain 
\begin{align*}
	E^{P^n}  \big[ \oZ(\delta_{(X^i, M^i)}) & \oZ(\delta_{(X^j, M^j)})\big] 
	\\&= E^{P^n} \big[ \big( \oK^i_t \oK^j_t - \oK^i_t \oK_s^j - \oK^i_s \oK^j_t + \oK^i_s \oK^j_s \big) \z (X^i, M^i) \z (X^j, M^j) \big]
	\\&= E^{P^n} \big[ \big( \oK^i_s \oK^j_s - \oK^i_s \oK_s^j - \oK^i_s \oK^j_s + \oK^i_s \oK^j_s \big) \z (X^i, M^i) \z (X^j, M^j) \big] 
	\\&= 0.
\end{align*}
This implies that
\[
\frac{1}{n^2} \sum_{i, j = 1}^n E^{P^n} \big[ \oZ(\delta_{(X^i, M^i)}) \oZ(\delta_{(X^j, M^j)})\big] =  \frac{1}{n^2} \sum_{k = 1}^n E^{P^n} \big[ \oZ(\delta_{(X^k, M^k)} \big].                                        \]
Using that 
\begin{align*}
E^{P^n} \Big[ \sup_{r \in [0, T]} |\oK^k_r|^2 \Big] &\leq E^{P^n} \Big[ \|g'\|_\infty \int_0^T \|\sigma^* (\xi^k_u, u, X^k, \X_n (X_{\cdot \wedge u})) y^* \|_U^2 \, du \Big]
 \\&\leq C \Big( 1 + E^{P^n} \big[ \|X^k\|_T^2 \big] + E^{P^n} \big[ \| \X_n (X) \|^2_p \big] \Big)
 \\&\leq C \Big( 1 + E^{P^n} \big[ \|X^k\|_T^p \big] + \frac{1}{n} \sum_{i = 1}^n E^{P^n} \big[ \|X^i\|_T^p \big]  \Big), 
\end{align*}
which follows from the linear growth assumption given by Condition \ref{cond: main1} (ii) and Burkholder's inequality, we conclude from Lemma \ref{lem: moment estimates} that 
\begin{align*}
	\frac{1}{n} \sum_{k = 1}^n E^{P^n} \big[ \oZ(\delta_{(X^k, M^k)}^2 \big] &\leq C.
\end{align*}
In summary, we have 
\begin{align*}
	E^{Q^n} \big[ \oZ^2 \big] \leq \frac{C}{n}, 
\end{align*}
which proves \eqref{eq: second main oZ = 0}. The proof of Proposition~\ref{prop: version of (ii)} is complete.
\end{proof}

	\subsection{Proof of Theorem \ref{theo: main1} (iii)}
 Given Theorem~\ref{theo: main1}~(i) and (ii), the proof for \cite[Theorem~2.5~(iii)]{C23c} yields the claim. For reader's convenience, we reproduce the argument here. 

 \smallskip 
	We use the notation from Theorem \ref{theo: main1} (iii). 
	Using the compactness of \(\ocA^n (\nu^n)\), which is due to Theorem \ref{theo: main1} (i), and standard properties of the limes superior, there exists a subsequence \((N_n)_{n = 1}^\infty\) of \(1, 2, \dots\) and measure \(Q^{N_n} \in \ocA^{N_n} (\nu^{N^n})\) such that
	\[
	\limsup_{n \to \infty} \sup_{Q \in \mathcal{U}^n (\nu^n)} E^Q \big[ \psi \big] = \lim_{n \to \infty} E^{Q^{N_n}} \big[ \psi \big].
	\]
	By Theorem \ref{theo: main1} (ii), there is a subsequence of \((Q^{N_n})_{n = 1}^\infty\) that converges in \(\W^\o (\W^\o (\Omega))\) to a measure \(Q^0 \in \mathcal{U}^0 (\nu^0)\). Hence, by the properties (upper semicontinuity and growth) of \(\psi\) and \cite[Lemma~4.11]{C23c}, we get
	\[
	\lim_{n \to \infty} E^{Q^{N_n}} \big[ \psi \big] \leq E^{Q^0} \big[ \psi \big] \leq \sup_{Q \in \mathcal{U}^0 (\nu^0)} E^Q \big[ \psi \big].
	\]
	This completes the proof. 
	\qed

	\subsection{Proof of Theorem \ref{theo: main1} (iv)}
	The strategy of proof is inspired by the proof for \cite[Theorem~2.12]{LakSIAM17}, cf. also the proof of \cite[Theorem~2.5~(iv)]{C23c}. In particular, we learned the idea to use the Krein--Milman theorem from the proof of \cite[Theorem~2.12]{LakSIAM17}.
	Let us start with an auxiliary result whose proof is postponed to the end of this section. 
	\begin{lemma} \label{lem: approx point mass}
		Assume that the Conditions~\ref{cond: main1} and \ref{cond: main2} hold. Let \((\nu^n)_{n = 0}^\infty \subset \W^p (H)\) be a sequence such that \(\nu^n \to \nu^0\) in \(\W^p (H)\) and take \(P \in \cA^0 (\nu^0)\). Then, there exists a sequence \((Q^{n})_{n = 1}^\infty\) with \(Q^{n} \in \mathcal{U}^{n} (\nu^{n})\) such that \(Q^{n} \to \delta_{P}\) in \(\W^p (\W^p (\Omega))\).
	\end{lemma}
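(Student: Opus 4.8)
The plan is to prove this by a direct (forward) propagation of chaos argument based on a synchronous coupling, which is the natural route once the Lipschitz hypothesis of Condition~\ref{cond: main2} is available. First I would freeze the control and the marginal flow: given \(P \in \cA^0 (x^0)\) with associated \(\mathbf{F}\)-predictable feedback control \(\f\), consider the classical (non-McKean--Vlasov) path-dependent SPDE with frozen coefficients \(\widetilde b (t, \omega) := b (\f_t (\omega), t, \omega, P^X_t)\) and \(\widetilde \sigma (t, \omega) := \sigma (\f_t (\omega), t, \omega, P^X_t)\). By Conditions~\ref{cond: main1}~(ii) and \ref{cond: main2} these are \(\mathbf{F}\)-predictable and satisfy linear growth together with the \(S\)-weighted Lipschitz estimate, so the equation is strongly well posed (Da Prato--Zabczyk fixed point, or Theorem~\ref{theo: existence appendix} together with pathwise uniqueness); since \(P\) solves the corresponding martingale problem, weak uniqueness forces its unique strong solution \(Y = Y (W)\) to have law \(P\). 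Analogously, for each \(n\) the \(n\)-particle path-dependent SPDE system with coefficients \(b (\f_t (\omega^k), t, \omega^k, \X_n (\omega_{\cdot \wedge t}))\), \(\sigma (\f_t (\omega^k), t, \omega^k, \X_n (\omega_{\cdot \wedge t}))\) and feedback controls \(\f^k_t (\omega) := \f_t (\omega^k)\) is strongly well posed. I would then fix one probability space carrying a sequence \((W^k)_{k \geq 1}\) of independent cylindrical Brownian motions, set \(\overline X^k := Y (W^k)\), so that \(\overline X^1, \overline X^2, \dots\) are i.i.d.\ with law \(P\), and let \((X^1, \dots, X^n)\) be the strong solution of the \(n\)-particle system driven by \(W^1, \dots, W^n\) with initial value \(x^n\). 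Its law lies in \(\cA^n (x^n)\), hence \(Q^n := \mathrm{Law} (X^1, \dots, X^n) \circ \X_n^{-1} \in \mathcal{U}^n (x^n)\), and it only remains to show \(Q^n \to \delta_P\).

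Next I would run the coupling estimate. Writing \(u_n (t) := \tfrac1n \sum_{k = 1}^n E \big[ \|X^k - \overline X^k\|_t^p \big]\) and \(\rho_n := \w_p (\X_n (\overline X), P)\), subtract the mild equations for \(X^k\) and \(\overline X^k\), apply the moment estimate of \cite[Lemma~4.2]{C23} (which uses \eqref{eq: DZ cond}) together with Condition~\ref{cond: main2}, and bound \(\w_p (\X_n (X_{\cdot \wedge s}), P^X_s) \leq \big( \tfrac1n \sum_i \|X^i - \overline X^i\|_s^p \big)^{1/p} + \w_p (\X_n (\overline X_{\cdot \wedge s}), P^X_s)\) as well as \(\w_p (\X_n (\overline X_{\cdot \wedge s}), P^X_s) \leq \rho_n\) (the stopping map \(\omega \mapsto \omega_{\cdot \wedge s}\) is \(1\)-Lipschitz on \(\Omega\)). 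Averaging over \(k\) gives
\[
u_n (t) \leq \C \Big( \|x^n - x^0\|_H^p + E \big[ \rho_n^p \big] + \int_0^t u_n (s)\, ds \Big), \qquad t \in [0, T],
\]
with \(\C\) independent of \(n\), so Gronwall's lemma yields \(u_n (T) \leq \C \big( \|x^n - x^0\|_H^p + E [ \rho_n^p ] \big)\).

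Finally, the empirical law of large numbers closes the argument. Since the \(\overline X^k\) are i.i.d.\ with law \(P \in \W^p (\Omega)\), Varadarajan's theorem together with the strong law of large numbers for \(\|\overline X^k\|_T^p\) gives \(\rho_n \to 0\) a.s.; moreover \(\rho_n^p \leq 2^{p - 1} \big( \tfrac1n \sum_k \|\overline X^k\|_T^p + \|P\|_p^p \big)\), and the right-hand side is uniformly integrable because its first summand converges in \(L^1\) (as \(\|\overline X^1\|_T^p\) is integrable). Hence \(E [ \rho_n^p ] \to 0\), and with \(\|x^n - x^0\|_H \to 0\) we obtain \(u_n (T) \to 0\). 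By the triangle inequality for \(\w_p\), \(E \big[ \w_p (\X_n (X), P)^p \big] \leq 2^{p - 1} \big( u_n (T) + E [ \rho_n^p ] \big) \to 0\); since \(\delta_P\) is a point mass, the \(p\)-Wasserstein distance between \(Q^n\) and \(\delta_P\) in \(\W^p (\W^p (\Omega))\) is exactly \(\big( E [ \w_p (\X_n (X), P)^p ] \big)^{1/p}\), which proves \(Q^n \to \delta_P\). I expect the only step that is not a routine Gronwall computation to be the \(L^p\)-convergence \(E [ \w_p (\X_n (\overline X), P)^p ] \to 0\) of the empirical Wasserstein distance for i.i.d.\ samples of a merely \(p\)-integrable law on the infinite-dimensional path space \(\Omega\); I would handle it, not via a quantitative rate, but by combining the a.s.\ weak convergence from Varadarajan's theorem with the uniform-integrability domination above. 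A secondary technical point is verifying that the frozen-coefficient SPDE indeed has law \(P\), which follows from weak uniqueness under the Lipschitz hypothesis.
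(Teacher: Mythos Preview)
There is a genuine gap in your coupling estimate. You plug the feedback \(\f\) into the \(n\)-particle system as \(\f^k_t(\omega) = \f_t(\omega^k)\), so particle \(k\) runs with control \(\f_s(X^k)\) while the i.i.d.\ copy \(\overline X^k\) runs with control \(\f_s(\overline X^k)\). When you subtract the two mild equations you must bound
\[
\big\|b(\f_s(X^k), s, X^k, \X_n(X_{\cdot\wedge s})) - b(\f_s(\overline X^k), s, \overline X^k, P^X_s)\big\|_H,
\]
but Condition~\ref{cond: main2} gives a Lipschitz estimate in \((\omega,\mu)\) only for a \emph{fixed} action \(f\in F\). Since \(\f\) is merely \(\mathbf{F}\)-predictable --- not continuous, let alone Lipschitz --- the values \(\f_s(X^k)\) and \(\f_s(\overline X^k)\) can differ arbitrarily, and the Gronwall argument does not close. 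The same objection undermines the earlier claim that \(\widetilde b(t,\omega)=b(\f_t(\omega),t,\omega,P^X_t)\) satisfies ``the \(S\)-weighted Lipschitz estimate'' in \(\omega\): the composite dependence through \(\f_t(\omega)\) destroys Lipschitz continuity, so neither strong well-posedness of the frozen equation nor the strong-solution representation \(\overline X^k=Y(W^k)\) is available from the stated hypotheses.

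The paper's fix is to decouple the control from the particle: first build i.i.d.\ copies \((X^k,W^k)\) of the McKean--Vlasov pair via a product construction (no strong-solution map needed), and then solve the \(n\)-particle system \(Y^{n,k}\) with control \(\f_s(X^k)\) evaluated on the \emph{i.i.d.\ copy} rather than on \(Y^{n,k}\) itself. Both \(X^k\) and \(Y^{n,k}\) now carry the same action value, Condition~\ref{cond: main2} applies verbatim, and the two Gronwall steps go through exactly as you outline (the empirical-Wasserstein convergence is quoted from \cite[Corollary~2.14]{LakLN}). The cost is that \(\f_s(X^k)\) is no longer an \(\mathbf{F}^n\)-feedback on \(Y^n\); membership of the law of \(Y^n\) in \(\cA^n(x^n)\) then rests on the convexity hypothesis of Condition~\ref{cond: main1}~(iii) via the Filippov-type identification behind Lemma~\ref{lem: nonlinear SPDE rel RCR}, a point the paper passes over quickly.
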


With Lemma~\ref{lem: approx point mass} at hand, we are ready to prove Theorem \ref{theo: main1} (iv). 
Set 
\[
L := \Big\{ Q \in \fP^q (\fP^q (\Omega)) \colon \, \exists \, Q_n \in \mathcal{U}^n (\nu^n) \text{ such that } Q = \lim_{n \to \infty} Q^n \text{ in } \fP^q (\fP^q (\Omega)) \Big\}. 
\] 
In the following we show that \(\mathcal{U}^0 (\nu^0) \subset L\), which proves the claim.
As each \(\ocA^n (\nu^n)\) is convex by the convexity of \(\cA^n (\nu^n)\) from Corollary~\ref{coro: conv}, \(L\) is also convex. Furthermore, \(L\) is closed (in \(\W^\o (\W^\o (\Omega))\)) by \cite[Proposition~1.1.47~(a); or Proposition~7.1.20]{HP_97}. Notice that \(\mathcal{U}^0 (\nu^0) = \W (\cA^0 (\nu^0))\) is convex and, by \cite[Theorem~15.9]{charalambos2013infinite}, its extreme points are given by \(\{\delta_{P^0} \colon P^0 \in \cA^0 (\nu^0)\}\). Recall from the proof of Theorem \ref{theo: main1} (i) that \(\cA^0 (\nu^0)\) is non-empty and compact in \(\fP^q (\Omega)\). Thus, \(\ocA^0 (\nu^0)\) is nonempty and compact in \(\W^\o (\W^\o (\Omega))\) and both \(\fP^q (\fP^q (\Omega))\) and \(\fP (\fP^q (\Omega))\) induce the same topology on \(\ocA^0 (\nu^0)\). 
Thanks to the Krein--Milman theorem (\cite[Theorem~7.68]{charalambos2013infinite}), we have
\[
\mathcal{U}^0 (\nu^0) = \overline{\on{co}}\, \big[ \{\delta_{P^0} \colon P^0 \in \cA^0 (\nu^0)\}\big],
\]
where \(\overline{\on{co}}\) denotes the closure (in \(\W^\o (\W^\o (\Omega))\)) of the convex hull. Consequently, as \(L\) is convex and closed, \(\ocA^0 (\nu^0) \subset L\)
follows from Lemma~\ref{lem: approx point mass} that implies
\(
\{\delta_{P^0} \colon P^0 \in \cA^0 (\nu^0)\} \subset L.
\) 
The proof of Theorem~\ref{theo: main1}~(iv) is complete.
\qed
\vspace{0.25cm}

It is left to prove Lemma \ref{lem: approx point mass}. We prepare the proof with a version of Skorokhod's coupling theorem. 

\begin{lemma} \label{lem: skorokhod coupling} 
	Let \(E\) be a Polish space and let \((\mu^n)_{n = 0}^\infty \subset \W (E)\) be a sequence such that \(\mu^n \to \mu^0\) in \(\W (E)\). Further, take a probability space \((\Omega', \cF', P')\) that supports an \(E\)-valued random variable \(Z^0\) with distribution \(\mu^0\). Then, on the standard extension 
	\[
	\Omega' \times [0, 1], \ \cF\otimes \mathcal{B} ([0, 1]), \ P' \otimes \llambda, \qquad \llambda = \text{Lebesgue measure}, 
	\] 
	there exist random variables \(Z^1, Z^2, \dots\) such that \(Z^n \sim \mu^n\) and \(Z^n \to Z^0\) almost surely.
\end{lemma} 
\begin{proof}
	By \cite[Theorem~8.5.4]{bogachev}, there are Borel maps \(F_n \colon [0, 1] \to E\) such that \(\llambda \circ F_n^{-1} = \mu^n\) and \(\llambda\)-a.s. \(F_n \to F_0\). Furthermore, by \cite[Corollary~8.18]{Kallenberg}, on the standard extension, there exists a random variable \(U\), uniformly distributed on \([0, 1]\), such that a.s. \(Z^0 = F_0 (U)\). Now, \(Z^n := F_n (U) \sim \mu^n\) and a.s. \(Z^n \to Z^0\). This is the claim.
\end{proof}

\begin{proof}[Proof of Lemma \ref{lem: approx point mass}]
	We tailor a coupling idea as outlined in \cite{LakLN,SM} to our setting.
	Let \((\nu^n)_{n = 0}^\infty \subset \W^p (H)\) be such that \(\nu^n \to \nu^0\) in \(\W^p (H)\) and take \(P \in \cA^0 (\nu^0)\). By definition, possibly on a standard extension of the stochastic basis \((\Omega, \cF, \mathbf{F}, P)\) whose notation we ignore for simplicity, there exists a standard cylindrical Brownian motion \(W\) such that \(P\)-a.s., for all \(t \in [0, T]\),
	\[
	X_t = S_t X_0 + \int_0^t S_{t - s} b (\f_s, s, X, P^X_s) \, ds + \int_0^t S_{t - s} \sigma (\f_s, s, X, P^X_s) \, d W_s.
	\]
	Using the usual product construction, we may construct a filtered probability space (whose expectation we denote by \(E\)) that supports a sequence \((X^n, W^n)_{n = 1}^\infty\) of independent copies of \((X, W)\). In particular, for \(t \in [0, T]\),
	\[
	X^k_t = S_t X^k_0 + \int_0^t S_{t - s} b (\f_s (X^k), s, X^k, P^X_s) \, ds + \int_0^t S_{t - s} \sigma (\f_s (X^k), s, X^k, P^X_s) \, d W^k_s.
	\]
	Furthermore, by Lemma~\ref{lem: skorokhod coupling}, possibly passing again to a standard extension that we still denote by \((\Omega, \cF, \mathbf{F}, P)\), we may assume that there exist \(\cF_0\)-measurable random variables \(\{X^{n, k}_0 \colon n \in \mathbb{N}, \, k \leq n\}\) such that \(X^{n, 1}_0, \dots, X^{n, n}_0\) are i.i.d. with distribution \(\nu^n\) and almost surely \(X^{n, k}_0 \to X^k_0\) as \(n \to \infty\). In particular, as \(\nu^n \to \nu^0\) in \(\W^p (H)\), we also have \(X^{n, k}_0 \to X^k_0\) in \(L^p\), see \cite[Proposition~A.1]{LakSPA15} and \cite[Lemma~5.10]{Kallenberg}.
	Thanks to the Conditions \ref{cond: main1} (ii) and \ref{cond: main2}, a standard contraction argument (see, e.g., the proof of \cite[Theorem A.1]{C23}) shows that (on our underlying filtered probability space) there are continuous \(H^n\)-valued processes \(Y^n = (Y^{n, 1}, \dots, Y^{n, n})\) with dynamics
	    \begin{align*}
	Y^{n, k}_t = S_t X^{n, k}_0 &+ \int_0^t S_{t - s} b (\f_s (X^k), s, Y^{n, k}, \X_n (Y^n_{\cdot \wedge s})) \, ds 
	\\&+ \int_0^t S_{t - s} \sigma (\f_s (X^k), s, Y^{n, k}, \X_n (Y^n_{\cdot \wedge s})) \, d W^k_s, \quad t \in [0, T].
	\end{align*}
Using the inequality from \cite[Lemma 4.2]{C23}, and our Lipschitz assumptions, for every \(t \in [0, T]\), we obtain that 
\begin{align*}
	E \Big[ & \sup_{s \in [0, t]} \|Y^{n, k}_s - X^k_s\|^p_H \Big] 
	\\&\leq C\Big( E \Big[ \|X^{n, k}_0 - X^k_0\|_H^p \Big] + \int_0^t E\Big[ \|Y^{n, k}_s - X^k_s\|^p_H + \w_p (\X_n (Y^n_{\cdot \wedge s}), P^X_s)^p \Big] \, ds \Big)
	\\&\leq C\Big( E \Big[ \|X^{n, k}_0 - X^k_0\|_H^p \Big] + \int_0^t E\Big[\sup_{r \in [0, s]} \|Y^{n, k}_r - X^k_r\|^p_H + \w_p (\X_n (Y^n_{\cdot \wedge s}), P^X_s)^p \Big] \, ds \Big).
\end{align*}
Gronwall's lemma yields that 
\begin{align} \label{eq: Gron 1}
	E \Big[ \sup_{s \in [0, t]} \|Y^{n, k}_s - X^k_s\|^p_H \Big] \leq C \Big( E \Big[ \|X^{n, k}_0 - X^k_0\|_H^p \Big] + \int_0^t E \Big[\w_p (\X_n (Y^n_{\cdot \wedge s}), P^X_s)^p \Big] ds \Big).
\end{align}
We set \(Z^n := (X^1, \dots, X^n)\). Using the coupling \(\frac{1}{n} \sum_{k = 1}^n \delta_{(Y^{n, k}, X^k)}\), we observe that 
\begin{align} \label{eq: coupl 1}
	\w_p(\X_n (Y^n_{\cdot \wedge t}), \X_n (Z^n_{\cdot \wedge t}))^p \leq \frac{1}{n} \sum_{k= 1}^n \sup_{s \in [0, t]} \|Y^{n, k}_s - X^k_s\|^p_H.
\end{align}
Hence, using the triangle inequality, \eqref{eq: Gron 1} and \eqref{eq: coupl 1}, we obtain that 
\begin{align*}
	E \Big[ & \w_p (\X_n (Y^n_{\cdot \wedge t}), P^X_t)^p \Big] 
	\\&\leq C \Big( E \Big[ \|X^{n, k}_0 - X^k_0\|_H^p \Big] + \int_0^t E \Big[\w_p (\X_n (Y^n_{\cdot \wedge s}), P^X_s)^p\Big] \, ds \Big) + E \Big[ \w_p (\X_n (Z^n_{\cdot \wedge t}), P^X_t)^p \Big].
\end{align*}
Using Gronwall's lemma once again (notice that \(t \mapsto E [ \w_p (\X_n (Z^n_{\cdot \wedge t}), P^X_t)^p]\) is increasing), we get that 
\begin{align} \label{eq: main1}
	E \Big[ \w_p (\X_n (Y^n), P)^p \Big] \leq C \Big( E \Big[ \|X^{n, k}_0 - X^k_0\|_H^p \Big] + E \Big[ \w_p (\X_n (Z^n), P)^p \Big] \Big).
\end{align}
As \(X^1, X^2, \dots\) are i.i.d. copies of \(X\), it follows from \cite[Corollary 2.14]{LakLN} that 
\[
E \Big[ \w_p (\X_n (Z^n), P)^p \Big] \to 0 \text{ as } n \to \infty.
\]
Using further that \(X^{n, k}_0 \to X^k_0\) in \(L^p\) by construction, we conclude from \eqref{eq: main1} that 
\[
E \Big[ \w_p (\X_n (Y^n), P)^p \Big] \to 0 \text{ as } n \to \infty.
\]
Let \(Q^n\) be the law of \(\X_n (Y^n)\). Then, as 
\[
\widehat{\w}_p (Q^n, \delta_P)^p = E \Big[ \w_p (\X_n (Y^n), P)^p \Big], 
\]
it follows that \(Q^n \to \delta_P\) in \(\W^p (\W^p (\Omega))\). 
To complete the proof, we need to explain that \(Q^n \in \mathcal{U}^{n} (\nu^{n})\), which is not immediate as the controls in the definition of \(Y^1, \dots, Y^n\) are not of feedback type for this sequence. It turns out to be useful to relate \(Q^n\) to relaxed control rules. Notice that \(\overline{Q}^n := P \circ (Y^1, \dots, Y^n, \delta_{\f_t (Y^1)} \, dt, \dots, \delta_{\f_t (Y^n)} \, dt )^{-1}\in \cC^n (\nu^n)\). Hence, Lemma~\ref{lem: nonlinear SPDE rel RCR} yields that \(\overline{Q}^n \circ (X^1, \dots, X^n)^{-1} \in \cA^n (\nu^n)\) and consequently, 
\[
Q^n = \overline{Q}^n \circ (X^1, \dots, X^n)^{-1} \circ \X_n^{-1} \in \mathcal{U}^{n} (\nu^{n}).
\] 
The lemma is proved.
\end{proof}

		\subsection{Proof of Theorem \ref{theo: main1} (v)}
   Given Theorem~\ref{theo: main1}~(iv), the proof for \cite[Theorem~2.5~(v)]{C23c} can be used without modification. For reader's convenience, we reproduce the argument here, using the notation from Theorem \ref{theo: main1}~(v).

 \smallskip  
Take an arbitrary measure \(Q^0 \in \ocA^0 (\nu^0)\). Then, by Theorem \ref{theo: main1} (iv), there exists a sequence \(Q^{n} \in \ocA^{n} (\nu^{n})\) such that \(Q^{n} \to Q^0\) in \(\W^\o (\W^\o(\Omega))\). Now, by \cite[Lemma~4.11]{C23c}, and our assumptions on \(\psi\), 
\begin{align*}
	E^{Q^0} \big[ \psi \big] \leq \liminf_{n \to \infty} E^{Q^{n}} \big[ \psi \big] 
	\leq \liminf_{n \to \infty} \sup_{Q\in \ocA^{n} (\nu^{n})} E^{Q} \big[ \psi \big].
\end{align*}
As \(Q^0\) was arbitrary, we get 
\[
\sup_{Q \in \ocA^0 (\nu^0)} E^Q \big[ \psi \big] \leq \liminf_{n \to \infty} \sup_{Q \in \mathcal{U}^n (\nu^n)} E^Q \big[ \psi \big].
\]
The proof is complete.
\qed
	
		\subsection{Proof of Theorem \ref{theo: main1} (vi)}
     Given Theorem~\ref{theo: main1}~(iii) and (v), the proof for \cite[Theorem~2.5~(vi)]{C23c} can be used without modification. For reader's convenience, we reproduce the argument here.

 \smallskip 
		Let \(\psi \colon \W^\o (\Omega) \to \bR\) be a continuous function with the property \eqref{eq: bound property}. Then, by Theorem~\ref{theo: main1} (iii) and (v), for every sequence \((\nu^n)_{n = 0}^\infty \subset \W^p (H)\) with \(\nu^n \to \nu^0\) in \(\W^p (H)\), we get
		\[
		\sup_{Q \in \ocA^n (\nu^n)} E^Q \big[ \psi \big] \to \sup_{Q \in \ocA^0 (\nu^0)} E^Q \big[ \psi \big], \quad n \to \infty.
		\]
		Now, it follows from \cite[Theorem on pp. 98--99]{remmert} that \(\nu \mapsto \sup_{Q \in \ocA^0 (\nu)} E^Q [ \psi ]\) is continuous from \(\W^p (H)\) into \(\bR\) and that the convergence \eqref{eq: compact convergence} holds.
		The proof is complete. \qed 
  
		\subsection{Proof of Theorem \ref{theo: main1} (vii)}
Given Theorem~\ref{theo: main1}~(i), (ii) and (iv), the claim can be proved identically to \cite[Theorem~2.5~(vii)]{C23c}. For reader's convenience, we reproduce the argument here.

 \smallskip 
Recall that \(\widehat{\w}_\o\) denotes the \(\o\)-Wasserstein metric on \(\W^\o (\W^\o (\Omega))\). By virtue of \cite[Theorem on pp. 98--99]{remmert}, it suffices to prove that for every sequence \( (\nu^n)_{n =0}^{\infty}\subset \W^p (H)\) with \(\nu^n \to \nu^0\) in \(\W^p (H)\),
		\begin{align*}
		\mathsf{h}	\, ( \ocA^n (\nu^n), \ocA^0(\nu^0)) = \max \Big\{ \max_{Q \in \ocA^n (\nu^n)} \widehat{\w}_\o (Q, \ocA^0 (\nu^0)) , \max_{Q \in \ocA^0 (\nu^0)} \widehat{\w}_\o (Q, \ocA^n (\nu^n))\Big\} \to 0
		\end{align*}
	as \(n \to \infty\). Notice that the maxima are attained by the compactness of the sets \(\ocA^n (\nu^n)\) and \(\ocA^0 (\nu^0)\) that follow from Theorem \ref{theo: main1} (i).
	
	We start investigating the first term. 	
	By the compactness of each \(\ocA^n (\nu^n)\), for every \(n  \in \mathbb{N}\), there exists a measure \(Q^n \in \ocA^n (\nu^n)\) such that 
	\[
	\max_{Q \in \ocA^n (\nu^n)} \widehat{\w}_\o (Q, \ocA^0 (\nu^0)) = \widehat{\w}_\o (Q^n, \ocA^0 (\nu^0)).
	\]
	By Theorem \ref{theo: main1} (ii), every subsequence of \(1, 2, \dots\) has a further subsequence \((N_n)_{n = 1}^\infty\) such that \((Q^{N_n})_{n = 1}^\infty\) converges in \(\W^\o (\W^\o (\Omega))\) to a measure \(Q^0 \in \ocA^0 (\nu^0)\). Now, by the continuity of the distance function, we have 
	\[
	\widehat{\w}_\o (Q^{N_n}, \ocA^0 (\nu^0)) \to \widehat{\w}_\o (Q^0, \ocA^0 (\nu^0)) = 0.
	\]
	We conclude that
	\[
		\max_{Q \in \ocA^n (\nu^n)} \widehat{\w}_\o (Q, \ocA^0 (\nu^0)) = \widehat{\w}_\o (Q^n, \ocA^0 (\nu^0)) \to 0 \text{ as } n \to \infty.
	\]
	
	We turn to the second term. By the compactness of \(\ocA^0 (\nu^0)\), for every \(n \in \mathbb{N}\), there exists a measure \(R^n \in \ocA^0 (\nu^0)\) such that 
	\[
	\max_{Q \in \ocA^0 (\nu^0)} \widehat{\w}_\o (Q, \ocA^n (\nu^n)) = \widehat{\w}_\o (R^n, \ocA^n (\nu^n)).
	\]
	Let \((N^n_1)_{n = 1}^\infty\) be an arbitrary subsequence of \(1, 2, \dots\). Again by compactness of \(\ocA^0 (\nu^0)\), there exists a subsequence \((N^n_2)_{n = 1}^\infty \subset (N^n_1)_{n = 1}^\infty\) such that \((R^{N^n_2})_{n = 1}^\infty\) converges in \(\W^\o (\W^\o (\Omega))\) to a measure \(R^0 \in \ocA^0 (\nu^0)\). 
	By Theorem \ref{theo: main1} (iv), there exists a sequence  \((Q^{N^n_2})_{n = 1}^\infty\) such that \(Q^{N^n_2} \in \ocA^{N^n_2}(\nu^{N^n_2})\) and \(Q^{N^n_2} \to R^0\) in \(\W^\o (\W^\o (\Omega))\). Finally, 
	\[
	 \widehat{\w}_\o (R^{N^n_2}, \ocA^{N^n_2} (\nu^{N^n_2})) \leq \widehat{\w}_\o (R^{N^n_2}, Q^{N^n_2}) \leq  \widehat{\w}_\o (R^{N^n_2}, R^0) + \widehat{\w}_\o(R^0, Q^{N^n_2}) \to 0.
	\]
	As \((N^n_1)_{n = 1}^\infty\) was arbitrary, this proves that 
	\[
	 \widehat{\w}_\o (R^n, \ocA^n (\nu^n)) \to 0.
	\]
	In summary, \(\ocA^n (\nu^n) \to \ocA^0 (\nu^0)\) in the Hausdorff metric topology. \qed

\appendix
\section{An existence theorem for SPDEs}

In this appendix, we provide an existence theorem for semilinear SPDEs with continuous path-dependent coefficients that becomes useful in the proof of our main theorem. The result can be viewed as an extension of some existence results from \cite{gatarekgoldys,GMR}. As its proof follows well-trodden paths, we only sketch it.

\smallskip
Let \(\mu \colon [0, T] \times \Omega \to H\) and \(a \colon [0, T] \times \Omega \to L (U, H)\) be Borel functions that are predictable. Furthermore, as in Section~\ref{sec: main1}, let \(A \colon D (A) \subset H \to H\) be the generator of a strongly continuous semigroup \((S_t)_{t \geq 0}\) on the Hilbert space \(H\), let \(\j \colon [0, T] \to [0, \infty]\) be a Borel function that satisfies \eqref{eq: DZ cond} for \(\alpha \in (0, 1/2)\), \(p > 2\), and \(\rho \in (0, 1 - p/2)\).

\begin{condition} \label{cond: existence1}
 \begin{enumerate}
     \item[\textup{(i)}] \(\mu\) and \(a\) are continuous. 
         \item[\textup{(ii)}] There exists a constant \(C > 0\) such that 
		\begin{align*}
			\| \mu (t, \omega) \|_H + \| a(t, \omega)\|_{L (U, H)} & \leq C \big[ 1 + \|\omega\|_t \big],  
			\\
			\|S_s a(t, \omega) \|_{L_2 (U, H)} &\leq \j (s) \big[ 1 + \|\omega\|_t \big],  
		\end{align*}
		for all \(s, t \in [0, T]\) and \(\omega \in \Omega\).
 \end{enumerate}
\end{condition}

\begin{condition} \label{cond: compact1}
The operator \(A\) generates a compact semigroup, i.e., \(S_t\) is a compact operator for every \(t > 0\).
\end{condition}

\begin{condition} \label{cond: compact2}
There is a Riesz basis $(e_k)_{k=1}^\infty \subset H$ with the following properties:
\begin{enumerate}
	\item[\textup{(i)}] There exists a sequence $(\lambda_k)_{k=1}^\infty \subset \mathbb{R}$ such that $\lambda_k>0$ and 
	\begin{align*} 
		S_t^{*}e_k=e^{-\lambda_k t}e_k\quad \text{for all $k\in\mathbb N$}.
	\end{align*} 
	\item[\textup{(ii)}] There exists a sequence $(\cCk)_{k=1}^\infty\subset \mathbb R_{+}$ such that
	\begin{align*}
		\sum\limits_{k=1}^\infty  \cCk^2 \lambda_k^{-\rho} <\infty,
	\end{align*}
	and
	\begin{align*}
		|\langle \mu (t, \omega),e_k\rangle_H |^2 %&\leq \cCk^2(1+\|x\|_H^2)\\
		+ \| a^* (t, \omega) e_k\|^2_{U}&\leq \cCk^2\, \big[1+\|\omega\|_t^2\big]
	\end{align*}
	for all $(t, \omega, k)\in [0, T] \times \Omega \times \mathbb N$.
\end{enumerate}
\end{condition} 

\begin{theorem} \label{theo: existence appendix}
	Suppose that Condition~\ref{cond: existence1} holds and in addition assume either Condition~\ref{cond: compact1} or \ref{cond: compact2}. Then, for every \(\nu \in \W(H)\), the SPDE
	\[
	d Y_t = AY_t \, dt + \mu (t, Y) \, dt + a (t, Y) \, d W_t, \quad Y_0 \sim \nu, 
	\] 
	has a martingale solution.\footnote{A martingale solution is a probabilistically weak and analytically mild solution (see \cite[Definition~3.1]{gawa}).} Here, \(W\) is a standard cylindrical Brownian motion. 
\end{theorem}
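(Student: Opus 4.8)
The plan is to obtain a martingale solution by the classical route of time-mollification, uniform moment and tightness estimates, and a limit passage in the associated martingale problem, following \cite{gatarekgoldys, bhatt1998interacting} and mirroring the proofs of Lemmata~\ref{lem: Rk rel comp} and \ref{lem: difference moment estimates}. First I would freeze the coefficients on a fine time grid: for $m \in \mathbb{N}$ put $\tau_m(t) := \lfloor m t \rfloor / m$ and $\mu^m(t, \omega) := \mu(\tau_m(t), \omega)$, $a^m(t, \omega) := a(\tau_m(t), \omega)$. Since $\mu$ and $a$ are predictable, on the interval $[k/m, (k+1)/m)$ the coefficients $\mu^m(t, \cdot) = \mu(k/m, \cdot)$, $a^m(t, \cdot) = a(k/m, \cdot)$ depend only on $\omega|_{[0, k/m)}$; hence, given the solution on $[0, k/m]$, the equation on $[k/m, (k+1)/m)$ is affine with constant (random) coefficients and is solved explicitly by a stochastic convolution, so $Y^m$ is built recursively and no regularity of $\mu, a$ beyond measurability is used. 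This produces, on one fixed stochastic basis carrying a cylindrical standard Brownian motion $W$, a continuous mild solution $Y^m$. Next, from Condition~\ref{cond: existence1}~(ii), the factorization method, the integrability assumption \eqref{eq: DZ cond} (recall $p > 1/\alpha$), the stochastic-convolution estimate \cite[Lemma~4.2]{C23}, and Gronwall's inequality — exactly as in Lemma~\ref{lem: moment estimates} — I would derive the uniform bound $\sup_m E[\|Y^m\|_T^p] < \infty$.

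Then I would prove tightness of the laws of $(Y^m)_{m \ge 1}$ in $\fP(\Omega)$, splitting into two cases. Under Condition~\ref{cond: compact1} I would reuse the compactness method of \cite{gatarekgoldys} from the proof of Lemma~\ref{lem: Rk rel comp}~(i): the factorization identity $Y^m = S x + R_1(s \mapsto \mu^m(s, Y^m)) + \tfrac{\sin(\pi \alpha)}{\pi} R_\alpha Z^m$, with $Z^m_t := \int_0^t (t - s)^{-\alpha} S_{t - s} a^m(s, Y^m)\, dW_s$, expresses $Y^m$ through the compact operators $R_1, R_\alpha \colon L^p([0, T]; H) \to \Omega$ applied to terms whose $L^p([0, T]; H)$-moments are bounded uniformly in $m$ by the previous step, so the relatively compact sets $K_\ell$ from that proof carry mass $\ge 1 - C/\ell$. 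Under Condition~\ref{cond: compact2} I would instead redo the computation behind Lemma~\ref{lem: difference moment estimates} — using $S_t^* e_k = e^{- \lambda_k t} e_k$, the summability $\sum_k \cCk^2 \lambda_k^{-\rho} < \infty$, and the elementary bound $1 - e^{-x} \le x^\varepsilon$ — to obtain $E[\|Y^m_t - Y^m_s\|_H^p] \le C\, |t - s|^{1 + \delta}$ with $\delta := p(1 - \rho)/2 - 1 > 0$, and then invoke Kolmogorov's tightness criterion \cite[Theorem~23.7]{Kallenberg}.

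Finally I would pass to the limit. By Prokhorov's and Skorokhod's theorems there are, along a subsequence, processes $\widetilde Y^m \to \widetilde Y$ almost surely on a common probability space, with $\widetilde Y^m$ having the law of $Y^m$. Since each $Y^m$ is a mild solution, for $y^* \in D(A^*)$ and $g \in C^2_c(\bR; \bR)$ the process $g(\langle Y^m, y^*\rangle_H)$ minus its compensator built from $\mu^m, a^m$ is a square-integrable martingale; Condition~\ref{cond: existence1}~(i), the uniform moment bound (yielding uniform integrability via Vitali's theorem), and $\tau_m(s) \to s$ then allow me to pass to the limit and conclude that the corresponding compensated process for $\widetilde Y$, built from $\mu, a$, is a martingale. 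A representation theorem for cylindrical local martingales \cite[Theorem~3.1]{OndRep} together with the equivalence of analytically weak and analytically mild formulations \cite[Theorem~13]{MR2067962} then produce, possibly on a standard extension, a cylindrical standard Brownian motion relative to which $\widetilde Y$ is a mild solution, i.e.\ a martingale solution. I expect the tightness step to be the main obstacle: under Condition~\ref{cond: compact1} it hinges on the factorization method turning the smoothing of the compact semigroup into compactness of $R_1$ and $R_\alpha$, whereas under Condition~\ref{cond: compact2} — where $(S_t)_{t \ge 0}$ need not be compact — there is no compact embedding available and one must squeeze the Kolmogorov modulus out of the spectral data, the exponential factors $e^{-\lambda_k t}$ supplying exactly the regularization that forces $\delta > 0$; once tightness and the moment bound are in place, the remaining steps are routine.
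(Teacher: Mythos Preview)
Your proposal is correct and follows the same overall three-step skeleton as the paper (approximation, tightness, martingale-problem limit), but the approximation scheme in Step~1 is genuinely different. The paper approximates \(\mu, a\) in the \emph{spatial} variable by Lipschitz coefficients \(\mu^n, a^n\) (via \cite[Lemma~4]{GMR}), obtaining each approximant \(Y^n\) through a contraction argument; you instead freeze the coefficients on a time grid and build \(Y^m\) by an explicit recursive stochastic convolution. Your route is more elementary in that it avoids the Lipschitz-approximation lemma and any fixed-point argument; the paper's route has the advantage that uniform convergence \(\mu^n \to \mu\), \(a^n \to a\) on compacts and the preservation of the spectral bound (property (d) under Condition~\ref{cond: compact2}) come packaged in the approximation lemma, whereas you must check these by hand (which is immediate here since \(\tau_m(t) \leq t\) implies \(\|\omega\|_{\tau_m(t)} \leq \|\omega\|_t\), and joint continuity of \(\mu, a\) gives the convergence). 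In Step~3 you pass to the limit via Skorokhod coupling and Vitali's theorem, while the paper localizes with stopping times \(T_\ell\) chosen to be \(Q\)-a.s.\ continuous and then uses the continuous mapping theorem together with tightness and the compact-convergence property~(c); both arguments are standard and equivalent in strength here, since the uniform \(p\)-th moment bound with \(p > 2\) already yields the uniform integrability needed for Vitali.
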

\begin{proof}
The proof follows the usual path, i.e., approximation of \(\mu\) and \(a\) by Lipschitz coefficients, establishing tightness and then employing a martingale problem argument. 

\smallskip
{\em Step 1: The approximation sequence}. By an inspection of the proof for \cite[Lemma~4]{GMR}, using Condition~\ref{cond: existence1}, it follows that there exist Borel functions \[\mu^n \colon [0, T] \times \Omega \to H, \quad a^n \colon [0, T] \times \Omega \to L (U, H)\] that are predictable and possess the following properties:
\begin{enumerate}
    \item[(a)] There exists a constant \(C > 0\), that does not depend on \(n\), such that 
		\begin{align*}
			\| \mu^n (t, \omega) \|_H + \| a^n (t, \omega)\|_{L (U, H)} & \leq C \big[ 1 + \|\omega\|_t \big], 
			\\
			\|S_s a^n (t, \omega) \|_{L_2 (U, H)} &\leq C \j (s) \big[ 1 + \|\omega\|_t \big], 
		\end{align*}
		for all \(s, t \in [0, T], \omega \in \Omega\) and \(n \in \mathbb{N}\).
    \item[(b)]
    For every \(n \in \mathbb{N}\), there exists a constant \(C = C_n > 0\) such that 
    \begin{align*}
        \| S_s (\mu^n (t, \omega) - \mu^n (t, \alpha)) \|_H & \leq C\, \| \omega - \alpha \|_t, \\
        \| S_s (a^n (t, \omega) - a^n (t, \alpha) ) \|_{L_2 (U, H)} & \leq C \j (s) \, \| \omega - \alpha \|_t,
    \end{align*}
    for all \(s, t \in [0, T]\) and \(\omega, \alpha \in \Omega\).
    \item[(c)] 
    For every compact set \(\mathscr{C} \subset \Omega\) and every \(t \in [0, T]\), 
    \[
    \sup \Big\{ \| \mu^n (t, \omega) - \mu (t, \omega) \|_H + \|a^n (t, \omega) - a (t, \omega) \|_{L (U, H)} \colon \omega \in \mathscr{C} \, \Big\} \to 0
    \]
    as \(n \to \infty\).
\end{enumerate}
Furthermore, if Condition~\ref{cond: compact2} is in force, then \(b^n\) and \(a^n\) also satisfy the following:
\begin{enumerate}
\item[\textup{(d)}]  There is a constant \(C > 0\), that does not depend on \(n\), such that, for all \((t, \omega) \in [0, T] \times \Omega\), and \(k, n \in \mathbb{N}\), 
	\begin{align*}
	|\langle \mu^n (t, \omega),e_k\rangle_H |^2 %&\leq \cCk^2(1+\|x\|_H^2)\\
	+ \| (a^n)^* (t, \omega) e_k\|^2_{U}&\leq C \, \cCk^2\, \big[1+\|\omega\|_t^2\big].
\end{align*}
\end{enumerate}

Take a filtered probability space \(\mathbb{B} = (\Sigma, \mathcal{G}, (\mathcal{G}_t)_{t \in [0, T]}, P)\) that supports a standard cylindrical Brownian motion \(W\) and an \(\cG_0\)-measurable random variable \(\xi_0 \sim \nu\). 
For every \(n \in \mathbb{N}\), thanks to (a) and (b) above, we can use a standard contraction argument (cf., for example, \cite[Appendix~A]{C23}) to conclude the existence of a mild solution process (with continuous paths) on the driving system \((\mathbb{B}, W)\) to the SPDE 
\[
d Y^n_t = A Y^n_t \, dt + \mu^n (t, Y^n) \, dt + a^n (t, Y^n) \, d W_t, \quad Y^n_0 = \xi_0. 
\]
In the following, we will see that the laws of \((Y^n)_{n = 1}^\infty\) are tight and that any of its accumulation points is a solution measure (i.e., the law of a solution process) to the original SPDE under consideration.

\smallskip
{\em Step 2: Tightness}. By virtue of Condition~\ref{cond: compact1}, or the properties (d) that hold under Condition~\ref{cond: compact2}, it follows along the lines of the proof for Lemma~\ref{lem: Rk rel comp} that the laws of \((Y^n)_{n = 1}^\infty\) are tight (equivalently, relatively compact) in \(\W (\Omega)\). We omit the details for brevity.

\smallskip
{\em Step 3: The martingale problem argument}. By Step 2, up to passing to a subsequence, we can assume that \(Q^n := P \circ (Y^n)^{-1}\) converges weakly to a measure \(Q\). 
Take \(y^* \in D (A^*)\) and \(g \in C^2_c (\bR; \bR)\). To streamline our notation, we set \(\mu^0 := \mu\) and \(a^0 := a\). 
For \(n \in \mathbb{Z}_+\), we set 
\[
\mathsf{M}^n := g (\langle X, y^* \rangle_H ) - \int_0^\cdot \mathcal{L}^n g (s, X) \, ds, 
\]
where 
\begin{align*}
	\mathcal{L}^n g (s, X) := g (\langle X_s, y^* \rangle_H) \, \big( \langle  X_s&, A^* y^* \rangle_H + \langle \mu^n (s, X), y^* \rangle \big)
	\\&+ \tfrac{1}{2} g'' (\langle X_s, y^* \rangle_H) \| (a^n)^* (s, X) y^* \|^2_U.
\end{align*}
Here, recall that \(X\) denotes the coordinate process on \(\Omega\).

Our aim is to prove that \(\mathsf{M}^0\) is a local \(Q\)-martingale. In that case, a standard representation theorem for cylindrical local martingales (\cite[Theorem~3.1]{OndRep}), jointly with the relation of weak and mild solutions (\cite[Theorem~13]{MR2067962}), provides the existence of a martingale solution. 
In the remainder of this proof, we establish this local martingale property. 

\smallskip
For \(\ell > 0\), set 
\[
T_\ell := \inf \{t \in [0, T] \colon \|X_t\|_H \geq \ell \, \}. 
\]
By virtue of \cite[Lemma~11.1.2]{SV}, there exists a sequence \((\ell^n)_{n = 1}^\infty\) such that, on one hand, \(\ell^n \nearrow \infty\)  and, on the other hand, \(\omega \mapsto T_{\ell^n}(\omega)\) is \(Q\)-a.s. continuous for every \(n \in \mathbb{N}\). 

Take two times \(0 \leq a < b \leq T\) and a bounded continuous function \(\psi \colon \Omega \to \bR\) such that \(\psi (\omega)\) depends on \(\omega\) only through \((\omega (s))_{s \leq a}\). Let \(\ell > 0\) be such that \(\omega \mapsto T_\ell (\omega)\) is \(Q\)-a.s. continuous. 
In the following, we show that 
\begin{align} \label{eq: to prove MPA}
E^{Q} \big[ (\mathsf{M}^0_{b \wedge T_\ell} - \mathsf{M}^0_{a \wedge T_\ell} ) \, \psi \, \big] = 0.
\end{align}
It is clear that this implies the desired local \(Q\)-martingale property of \(\mathsf{M}^0\).

The continuous mapping theorem yields that 
\begin{align} \label{eq: step CMT}
	E^{Q} \big[ (\mathsf{M}^0_{b \wedge T_\ell} - \mathsf{M}^0_{a \wedge T_\ell} ) \, \psi \, \big]  = \lim_{n \to \infty} E^{Q^n} \big[ (\mathsf{M}^0_{b \wedge T_\ell} - \mathsf{M}^0_{a \wedge T_\ell} ) \, \psi \, \big].
\end{align}
Here, we use that \(\mathsf{M}^0_{\cdot \wedge T_\ell}\) is continuous by Condition~\ref{cond: existence1}~(i) and bounded by Condition~\ref{cond: existence1}~(ii) and the definition of \(T_\ell\).
Thanks to (c) and the dominated convergence theorem, it follows that \(\mathsf{M}^n_{t \wedge T_\ell} \to \mathsf{M}^0_{t \wedge T_\ell}\) uniformly on compact subsets of \(\Omega\), for every time \(t \in [0, T]\) that we fix in the following.
By the definition of the stopping time \(T_\ell\), Condition~\ref{cond: compact1} and part (a) above, there exists a constant \(\con  > 0\), independent of \(n\), such that 
\[
\big| \mathsf{M}^n_{t \wedge T_\ell} - \mathsf{M}^0_{t \wedge T_\ell} \big| \leq \con.
\]
Take \(\varepsilon > 0\). By the tightness of \((Q^n)_{n = 1}^\infty\), there exists a compact set \(\mathscr{K} \subset \Omega\) such that 
\[
\sup_{n \in \mathbb{N}} Q^n (\mathscr{K}^c) \leq \varepsilon.
\]
Therefore, we obtain that 
\begin{align*}
	E^{Q^n} \big[ \big| \mathsf{M}^0_{t \wedge T_\ell} - \mathsf{M}^n_{t \wedge T_\ell} \big|  \big] &\leq \con \, \varepsilon + \sup_{\omega \in \mathscr{K}} \big| \mathsf{M}^n_{t \wedge T_\ell (\omega)} (\omega) - \mathsf{M}^0_{t \wedge T_\ell (\omega)} (\omega) \big| \to \con \, \varepsilon
\end{align*}
as \(n \to \infty\). As \(\varepsilon > 0\) was arbitrary, we conclude that 
\[
\big| E^{Q^n} \big[ \mathsf{M}^0_{t \wedge T_\ell} \, \psi \, \big] - E^{Q^n} \big[ \mathsf{M}^n_{t \wedge T_\ell} \, \psi \, \big] \big| \leq  \|\psi\|_\infty \, E^{Q^n} \big[ \big| \mathsf{M}^0_{t \wedge T_\ell} - \mathsf{M}^n_{t \wedge T_\ell} \big|  \big] \to 0, 
\] 
and consequently, with \eqref{eq: step CMT}, 
\begin{align*}
	E^{Q} \big[ (\mathsf{M}^0_{b \wedge T_\ell} - \mathsf{M}^0_{a \wedge T_\ell} ) \, \psi \, \big]  = \lim_{n \to \infty} E^{Q^n} \big[ (\mathsf{M}^n_{b \wedge T_\ell} - \mathsf{M}^n_{a \wedge T_\ell} ) \, \psi \, \big].
\end{align*}
For every \(n \in \mathbb{N}\), \(\mathsf{M}^n_{\cdot \wedge T_\ell}\) is a \(Q^n\)-martingale by the construction of \(Q^n\). Hence, we get that
\[
E^{Q^n} \big[ (\mathsf{M}^n_{b \wedge T_\ell} - \mathsf{M}^n_{a \wedge T_\ell} ) \, \psi \, \big] = 0, \quad n \in \mathbb{N},
\]
which establishes \eqref{eq: to prove MPA}. The proof is complete.
\end{proof}

\end{document}